\def \u {\boldsymbol{u}}
\def \D {\mathcal{D}}
\def \v {\boldsymbol{v}}
\def \w {\boldsymbol{w}}
\def \g {\mathbf{g}}
\def \V {\boldsymbol{V}_{\!\!\!\!\!\!\!\!\sigma}}
\def \L {\boldsymbol{L}^2_{\sigma}}
\def \H {\mathbf{H}}
\def \d {\mathrm{d}}
\def \dt {\mathrm{d}t}
\def \dx {\mathrm{d}x}
\begin{document}
\title{Global well-posedness for
a two-dimensional Navier-Stokes-Cahn-Hilliard-Boussinesq system
with singular potential
%\thanks{Received date, and accepted date (The correct dates will be entered by the editor).}
}

%For each author, make a block with the following macros:
\author{Lingxi Chen
\thanks{School of Mathematical Sciences, Fudan University, Handan Road 220, Shanghai 200433, People's Republic of China, (chenlingxi@gmail.com).}
}

\pagestyle{myheadings}
\markboth{Navier-Stokes-Cahn-Hilliard-Boussinesq System with Singular Potential}{L.-X. Chen}
\maketitle

\begin{abstract}
We study a general Navier-Stokes-Cahn-Hilliard-Boussinesq system 
that describes the motion of a mixture of two incompressible Newtonian fluids with thermo-induced Marangoni effects. The Cahn-Hilliard dynamics of the binary mixture is governed by aggregation/diffusion competition of the free energy with a physically-relevant logarithmic potential. The coupled system is studied in a bounded smooth domain $\Omega\subset \mathbb{R}^2$ and is supplemented with a no-slip condition for the fluid velocity, homogeneous Neumann boundary conditions for the order parameter and the chemical potential, homogeneous Dirichlet boundary condition for the relative temperature, and suitable initial conditions. For the
corresponding initial boundary value problem, we first prove the existence of global weak solutions and their continuous dependence with respect to the initial data. Under additional assumptions on the initial data, we prove the existence and uniqueness of a global strong solution and the validity of the strict separation property.
\end{abstract}
\begin{keywords}
Navier-Stokes equations; Cahn-Hilliard equation; Boussinesq equation; logarithmic potential; weak solution; strong solution, existence, continuous dependence.
\end{keywords}

\begin{AMS}
35Q35; 35A01; 35A02. 
\end{AMS}

%%%%%%%%%%%%%%%%%%%%%%%%%%%%%%%%%%%%%%%%%%%%%
\section{Introduction}
\label{Introduction}
In this study, we consider the following Navier-Stokes-Cahn-Hilliard-Boussinesq system with temperature-dependent viscosity, thermal conductivity and surface tension in $\Omega \times (0,T)$:
\begin{align}
\u_t+\u \cdot \nabla \u 
- \nabla \cdot(2 \nu(\theta) \mathcal{D} \u) 
+ \nabla p &= 
- \nabla \cdot \sigma
+ ( \text{Ra}\theta - \text{Ga} ) g \mathbf{e}_2,
\label{NSCHM}\\
	\nabla \cdot \u &= 0,
\label{INCOMPRESS}\\
	\phi_t+\u \cdot \nabla \phi &= \Delta \mu,
\label{CH1}\\
	\mu &= -\Delta \phi + W^{\prime}(\phi),
\label{CH2}\\
	\theta_t+\u \cdot \nabla \theta-\nabla \cdot(\kappa(\theta) \nabla \theta) &= 0.
\label{BOUSSINESQ}
\end{align}
Here, $\Omega\subset \mathbb{R}^2$ is a bounded domain with smooth boundary and $T>0$ is the final time. 
The symmetric gradient is defined by $\D = \frac{1}{2} (\nabla + \nabla^t) $. This system is completed with the following initial and boundary conditions:
\begin{align}
	&\left.\u\right|_{t=0}=\u_0(x),\left.\quad \phi\right|_{t=0}=\phi_0(x),\left.\quad \theta\right|_{t=0}=\theta_0(x), \quad \text{in}\ \Omega ,
\label{initial} \\
	&\left.\u\right|_{\partial \Omega}=\mathbf{0},
	\left.\quad \partial_\mathbf{n} \phi \right|_{\partial \Omega}
	= \left. \partial_\mathbf{n} \mu \right|_{\partial \Omega}=0, \left.\quad \theta\right|_{\partial \Omega}=0,
	\quad \text{on}\ \partial \Omega \times (0, T),
\label{boundary}
\end{align}
where $\mathbf{n}$ is the unit outward normal vector on $\partial\Omega$, and $\partial_\mathbf{n}$ denotes the outer normal derivative on $\partial\Omega$. The unknown variables of the system are denoted by $(\u, p, \phi, \mu, \theta)$. The vectorial function $\u$ is the (volume) averaged velocity of the binary fluids, admitting the incompressible Navier-Stokes equations, where $p$ is the pressure. The scalar function $\phi$ refers to the so-called order parameter (or phase-field), together with chemical potential $\mu$ consisting the Cahn-Hilliard equation. The scalar function $\theta$ represents the relative temperature and satisfies the Boussinesq system.

The coupled system (\ref{NSCHM})-(\ref{BOUSSINESQ}) describes the dynamics of a two-phase flow of incompressible viscous Newtonian fluids, where surface tension inhomogeneity on the interface caused by the temperature difference is considered. This phenomenon is referred to as the (thermal) Marangoni effect in the literature \cite{Marangoni,EnVarA1}. The system adopts the diffuse interface framework and contains a (convective) Cahn-Hilliard equation as a subsystem, where sharp interface (see e.g., \cite{sharp1,sharp2}) of macroscopically immiscible fluids are substituted by a capillary layer. Over this transition layer, physical quantities have steep but smooth changes, which is convenient for both mathematical and numerical analysis (see e.g., \cite{phasefield1,phasefield2,phasefield3,phasefield4} for advantages of the diffuse interface models). The model under consideration was first derived in \cite{EnVarA1,EnVarA2} by the energetic variational approach. For a more complicated model with temperature-dependent singular potential, we refer to the recent work \cite{Liuextend}.

In the above equations, the temperature-dependent coefficients  $\nu(\theta)$ and $\kappa(\theta)$ denote the fluid viscosity  and the thermal conductivity, respectively.
In \eqref{NSCHM}, $\mathbf{e}_2=(0,1)^\mathrm{T}$ is the unit vector along $y$-axis and the term $( \text{Ra}\theta - \text{Ga} ) g \mathbf{e}_2$ stands for the Rayleigh-Galileo approximation of the buoyancy force (see e.g., \cite{Gills,Joseph}).
The Cauchy stress tensor $\sigma$ is given by
\begin{align}\label{Cauchy_stress_tensor}
\sigma =  \lambda(\theta) ( \nabla \phi \otimes \nabla \phi ) +  \lambda(\theta)
\left( \frac{1}{2}|\nabla \phi|^2 + W(\phi)\right)  \mathbb{I}_2,
\end{align}
where $\mathbb{I}_2$ is the two-dimensional unit matrix.
In \eqref{Cauchy_stress_tensor}, the function $\lambda(\theta)$ denotes the temperature-dependent surface tension approximated by the $E\ddot{o}tv\ddot{o}s$ rule $$\lambda(\theta) = \lambda_0(a - b \theta),$$ with $a$ and $b$ being two constants (see \cite{Eotvos}). Besides, $W(\phi)$ is the physically-relevant logarithmic potential \cite{Cahn1958} (also known as the Flory-Huggins potential) such that
\begin{align} \label{Wphi}
W(\phi) = \frac{A}{2} \big[ (1 + \phi) \ln(1+\phi) + (1 - \phi) \ln(1-\phi)     \big]
- \frac{B}{2} \phi^2,\quad \phi\in (-1,1),
\end{align}
where $A$ and $B$ are constants satisfying $0<A<B$. It's easy to check that  $W^{\prime\prime}(\phi) \geqslant -\alpha$, for some $\alpha > 0$ (see e.g. \cite{MBoussinesq}). In this paper, we denote the convex part by
\begin{align}\label{F}
F(\phi) := \dfrac{A}{2} \big[ (1 + \phi) \ln(1+\phi) + (1 - \phi) \ln(1-\phi)    \big].
\end{align}

In the literature, there is little investigation for the well-posedness of the initial boundary value problem (\ref{NSCHM})-(\ref{boundary}). The inviscid case with a regular potential has been analyzed
in \cite{Zhao_regularity,Zhao_longtime1,Zhao_longtime2} in a two-dimensional bounded domain.
In \cite{Zhao_regularity}, the author proved the global existence of classical solutions with smooth initial data. Moreover, in \cite{Zhao_longtime1,Zhao_longtime2}, he considered the long time behavior with the same assumption for the initial data, and proved the exponential convergence rate to the equilibrium. The former considered the case where the mobility is constant, and the latter extended this to the case where the mobility depends on the order parameter. In a recent work \cite{MBoussinesq}, the authors considered a similar system with a singular potential, constant surface tension and thermal conductivity. They proved the existence of strong solutions by a standard Faedo-Galerkin method, which was based on a suitable regularization of the singular potential. 
In \cite{HWUXX2018,HW2017},  the authors considered an alternative system, where the Cahn-Hilliard equation with a singular potential was replaced by the Allen-Cahn equation with a regular potential. In this manner, possible difficulties related to the singular potential can be avoided, while the phase function still satisfies the maximum principle. In \cite{HWUXX2018}, the authors considered the case where the thermal conductivity $\kappa$ is a constant. In \cite{HW2017}, the results were extended to the case with temperature-dependent thermal conductivity. With suitable initial and boundary conditions, both works proved the existence of global weak and strong solutions in two dimensions with a suitably small initial temperature. The bound of the temperature ensures the system to be a dissipative one. Under the same assumptions on the initial temperature, the former proved that every global weak solution converges to a single equilibrium as time goes to infinity. In the situation with a smaller initial temperature, the latter derived uniform-in-time estimates for weak and strong solutions in corresponding spaces.

In this study, we consider a general diffuse interface model for incompressible two-phase flows with thermal Marangoni effects, where the viscosity, surface tension and thermal conductivity are all temperature-dependent.
Our aim is to establish well-posedness for the initial boundary value problem (\ref{NSCHM})-(\ref{boundary}) in a two-dimensional setting, without any smallness assumption on the initial temperature.
The results are listed as follows:
\begin{enumerate}[(1)]
\item(Theorem \ref{2dweak-exist}) existence of a global weak solution   and uniform-in-time estimates;
\item(Theorem \ref{2dweak-unique}) continuous dependence with respect to the initial data and uniqueness of the weak solution;
\item(Theorem \ref{2dstrong}) existence and uniqueness of the global strong solution and the validity of strict separation property for the phase function $\phi$.
\end{enumerate}	
To achieve our goal, several difficulties due to the highly nonlinear structure of the system and the singular potential have to be overcome.
For instance, we shall take advantage of a novel interpolation inequality (\ref{GiorginiHolder}). Some techniques for handling the singular potential originate from \cite{Giorgini2019,phibound}. Besides, for the continuous dependence estimate, we apply a useful estimate inspired by \cite{Giorgini}.

It is worth mentioning that we are able to prove the existence of global weak and strong solutions without assuming that the initial temperature should be properly small. The proof relies on a new attempt of the so-called semi-Galerkin scheme, in which only part of the unknown variables are approximated (see e.g., \cite{Lin1995,HW2017,HWUXX2018,HeWeak,HeStrong,GiorginiGalerkin}).
Comparing with the previous results in \cite{HWUXX2018,HW2017}, where the Allen-Cahn equation was considered, we observe that the Cahn-Hilliard equation can improve the dissipation of the system. More precisely, the fourth order Cahn-Hilliard equation provides us a higher order dissipative term involving $\|\nabla \mu\|_{L^2}$ (see (\ref{nabla phi m})), while the second order Allen-Cahn equation only presents a lower order dissipation term involving $\|\mu\|_{L^2}$ (see \cite{HWUXX2018,HW2017}). The latter is not enough to control the change of energy when the temperature $\theta$ is large. Next, with the novel interpolation inequality (\ref{GiorginiHolder}), we can derive uniform-in-time estimates for the solutions and remove the previous requirement on the smallness of the initial temperature as in \cite{HW2017}. Uniqueness of the weak solution is in virtue of a recent $L^4$-estimate for the pressure in the two-dimensional Stokes problem (see \cite{Giorgini}). Taking advantage of this, we can handle the most difficult term $I_8$ in the subsequent proof (see Section \ref{Globalweak}). We also note that although the Cahn-Hilliard is a fourth order parabolic equation, which in general does not enjoy a maximum principle, the singular potential still yields an $L^\infty$-estimate for $\phi$. This together with the maximum principle for $\theta$ (see (\ref{semimaximum})) plays a crucial role to control highly nonlinear terms like $\lambda(\theta) (\nabla \phi \otimes \nabla \phi)$.

\textit{Plan of the paper.} In Section \ref{Preliminaries}, we first introduce notations, necessary mathematical tools and basic assumptions. Then we present the definition of weak and strong solutions, and state the main results. Section \ref{Globalweak-exist} focuses on proving the existence of a global weak solution and some uniform-in-time estimates. In Section \ref{Global Weak Solutions-Uniqueness}, we prove the continuous dependence with respect to the initial data. Section \ref{Global Strong Solutions} is devoted to the existence and uniqueness of the global strong solution and the strict separation property. In the Appendix, we provide some details of the semi-Galerkin scheme, properties of an elliptic problem with singular nonlinearity, and the proof of Lemma \ref{Giorginiinequality}.

\section{Preliminaries} \label{Preliminaries}
\subsection{Notations} \label{Pre-Notations}
For any Banach space $X$, we denote by $X^{\prime}$ its dual space, and by $\langle \cdot,\cdot \rangle_X$ the usual dual product. The boldfaced letter $\boldsymbol{X}$ denotes the space of vector-valued functions with every component belonging to $X$.
Let $\Omega \subset \mathbb{R}^2$ be a bounded domain with smooth boundary. We write $C(\overline{\Omega})$, $C^{\alpha}({\overline{\Omega}})$ for the spaces of continuous functions and $\alpha$-H\"{o}lder continuous functions defined in the closure of $\Omega$, respectively.
Besides, we denote by $C_0^{\infty}(\Omega)$ the space of infinitely differentiable functions with compact support in $\Omega$. For $1\leqslant p\leqslant \infty$, $L^p(\Omega)$ denotes the Lebesgue space of real measurable $p$-th power integrable/essentially bounded functions over $\Omega$, endowed with the norm $\| \cdot \|_{L^p}$. In particular, when $p=2$, $L^2(\Omega)$ becomes a Hilbert space with inner product denoted by $(\cdot,\cdot)$, and we write the norm $\| \cdot \|$, omitting the subscript. For $f \in L^p(\Omega)$, $\overline{f}$ stands for the integral mean value of $f$ over $\Omega$ with the notation $\overline{f} =  |\Omega|^{-1} \int_{\Omega} f(x) \,\mathrm{d} x$.
For $m \in \mathbb{N}$, $1\leqslant p\leqslant \infty$, we denote by $W^{m,p}(\Omega)$ the Sobolev spaces of real measurable functions with weak derivatives in $L^p(\Omega)$ of orders up to $m$ with usual equivalent norms $\|\cdot\|_{W^{m,p}}$. $W_0^{m,p}(\Omega)$ denotes the closure of $C_0^{\infty}(\Omega)$ in $W^{m,p}(\Omega)$.   When $p=2$, we use the notation $H^m(\Omega) = W^{m,p}(\Omega)$. Without ambiguity, we usually omit the domain and just write $L^p$, $W^{m,p}$ and $H^m$. For two matrices $U,V\in \mathbb{R}^{2\times2}$, we denote $U:V=\mathrm{tr}(U^\mathrm{T}V) = \sum \limits_{i,j} U_{ij}V_{ij}$, where $U_{ij}, V_{ij}$ represent the $(i,j)$-th entry of $U$ and $V$, respectively.

For convenience, we introduce the linear subspaces
$$V_0 = \{v \in H^1\ :\ \overline{v} = 0	\}, \qquad
V_0^{\prime} = \{g \in (H^1)^{\prime}\ :\  \langle g,1 \rangle_{H^1} = 0	\}.$$
Consider the Neumann problem
\begin{align} \label{Pre-Nuemannproblem}
\begin{cases}
-\Delta u = g, \quad &\text{in}~\Omega, \\
 \partial_\textbf{n} u = 0, \quad &\text{on}~\partial \Omega.
\end{cases}
\end{align}
We define the bounded linear operator $A_0 \in \mathcal{L}(V_0,V_0^{\prime}):$
\begin{align} \label{Pre-A0}
\langle A_0 u , v \rangle = \int_{\Omega} \nabla u(x) \cdot \nabla v(x) \,\mathrm{d}x, \quad \text{for}~ u, v \in V_0.
\end{align}
It follows from the Poincar\'e-Wirtinger inequality and the Lax-Milgram theorem that $A_0$ is a linear isomorphism from $V_0$ to $V_0^{\prime}$. Moreover, for $g \in V_0^{\prime},$ it is easy to check that $\| \nabla A_0^{-1} g \|$ is a norm on $V_0^{\prime}$ equivalent to the normal functional norm, so we also write this norm as $\|\cdot\|_{V_0^{\prime}}$. Given $g \in H^1(0,T;V_0^{\prime})$, we have $A_0^{-1} g(t) \in L^2(0,T;V_0)$, and the chain rule holds according to the Lions-Magenes theorem (see \cite{LionsMagenes}):
\begin{align} \label{Pre-LM1}
\dfrac{1}{2} \dfrac{\d}{\dt} \|g(t)\|_{V_0^{\prime}}^2 = \langle g_t(t), A_0^{-1} g(t) \rangle_{H^1},\quad \text{a.e.} ~ t \in (0,T).
\end{align}

Next, let us consider the Stokes problem with homogeneous Dirichlet boundary conditions:
\begin{align} \label{Pre-Stokes}
	\begin{cases}
		-\Delta \u + \nabla p = \boldsymbol{g}, &\text{in}~\Omega, \\
		\nabla \cdot \u = 0, &\text{in}~\Omega, \\
		\u = 0, &\text{on}~\partial \Omega.
	\end{cases}
\end{align}
We introduce the space $C_{0,\sigma}^{\infty}(\Omega)$ of solenoidal infinitely-differentiable functions with compact support in $\Omega$, and use the symbols $\L$, $\V$ to denote the closure of $C_{0,\sigma}^{\infty}$ in $\boldsymbol{L}^2$ and $\boldsymbol{H}^1$, respectively. The Stokes operator is the bounded linear operator $S \in \mathcal{L}(\V,\V^{\prime})$ such that
\begin{align} \label{Pre-StokesS}
\langle S \u, \v \rangle_{\V} = (\nabla \u, \nabla \v),  \quad \text{for}~ \u, \v \in \V.
\end{align}
Thanks to the Poincar\'e inequality and the Lax-Milgram theorem, $S$ is a linear isomorphism from $\V$ to $\V^{\prime}$. Furthermore, for $\boldsymbol{g} \in \V^{\prime}$, $\| \nabla S^{-1} \boldsymbol{g}\|$ is an equivalent norm on $\V^{\prime}$ to the natural one. In this paper, we write this norm as $\|\cdot\|_{\V^{\prime}}$. Assuming $\boldsymbol{g} \in \boldsymbol{H}^{-1}(\Omega)$, the pressure $\nabla p$  occurs due to the well-known de Rham's theorem (see e.g., \cite{Sohr,Gilles}). It is proved that (see \cite{Temam})
\begin{align} \label{Pre-Stokespressure}
\| p \| \leqslant C\|\boldsymbol{g}\|_{\boldsymbol{H}^{-1}}.
\end{align}
For each $\boldsymbol{g} \in H^1(0,T; \V^{\prime})$, we have $S^{-1} \boldsymbol{g} \in L^2(0,T;\V)$. Due to the Lions-Magenes theorem, it follows that
\begin{align} \label{Pre-LM2}
\frac{1}{2} \frac{\mathrm{d}}{\mathrm{d} t} \| \boldsymbol{g}(t) \|_{\V^{\prime}}^2 = \langle \boldsymbol{g}_t(t), S^{-1} \boldsymbol{g}(t) \rangle_{\V}, \quad \text{a.e.}~ t \in(0,T).
\end{align}

\subsection{Useful inequalities} \label{Pre-Inequality}
The following interpolation and elliptic estimates will be used in this paper:
\begin{align}
 \|u\| \leqslant \|u\|_{V_0^{\prime}}^{\frac{1}{2}} \|\nabla u\|^{\frac{1}{2}}, \quad &\text{for}~ u \in V_0, ~ V_0 \hookrightarrow V_0^{\prime} ~ \text{is the canonical injection}.
\label{Pre-V0interpolation} \\
\| \nabla A_0^{-1} g \|_{H^k} \leqslant& C\|g\|_{H^{k-1}},  \quad \text{for}~ g \in H^{k-1} \cap V_0, ~ k \in \mathbb{N}.
\label{Pre-V0elliptic}
\end{align}
When the spatial dimension is two, according to the well-posedness and regularity theory of the Stokes problem \eqref{Pre-Stokes}, for every $\boldsymbol{g} \in \boldsymbol{L}_\sigma^2$, there exists a unique pair $(\u,p) \in (\boldsymbol{H}^2 \cap \V) \times H^1$ with $\overline{p} = 0$, such that (see e.g., \cite{Gilles})
\begin{align} \label{Pre-Stokesestimate}
\| \u \|_{\boldsymbol{H}^2} + \|p\|_{H^1} \leqslant C\|\boldsymbol{g}\|.
\end{align}
Moreover, the following estimate on the $L^4$-norm of the pressure holds (see \cite[Lemma 3.1]{Giorgini}):
\begin{align}\label{pressure}
\|p\|_{L^4} \leq C\left\|\nabla S^{-1} \boldsymbol{g}\right\|^{\frac{1}{2}}  \|\boldsymbol{g}\|^{\frac{1}{2}},\quad \forall\,\boldsymbol{g} \in \boldsymbol{L}_\sigma^2.
\end{align}

We also recall the Korn's inequality
\begin{align}\label{Korn}
\| \nabla \u \| \leqslant \sqrt{2} \| \D \u \| \leqslant \sqrt{2} \| \nabla \u \|,   \quad \text{for}~ \u \in \V,
\end{align} 
where $\D \u = \frac{1}{2} ( \nabla \u + (\nabla \u)^t )$.

We report the Gagliardo-Nirenberg inequality that will be frequently used later (see e.g., \cite{Nirenberg}):
\medskip

\begin{lemma}[Gagliardo-Nirenberg Inequality] \label{Gagliardo-Nirenberg Inequality}
  Let $\Omega \subset \mathbb{R}^n$ be a bounded domain with smooth boundary. Let $j, m \in \mathbb{Z}$, $p,q,r \in \mathbb{R}$  satisfy  $0 \leqslant j<m$, $1 \leqslant q, r \leqslant \infty$, $\frac{j}{m} \leqslant a \leqslant 1$ (if $1<r<\infty$ and $m-j-\frac{n}{r}$ is a nonnegative integer, then  $a \neq 1$)  such that
\begin{align} \label{Inequality-dimension}
\frac{1}{p}-\frac{j}{n}=a\left(\frac{1}{r}-\frac{m}{n}\right)+(1-a) \frac{1}{q}.
\end{align}	
Then there are two positive constants $C_1, C_2$ depending only on $\Omega$, such that for any $u \in W^{m, r}(\Omega) \cap L^q(\Omega)$, the following inequality holds:
\begin{align} \label{Inequality-GagliardoNirenberg}
\|D^j u\|_{L^p} \leqslant C_1\|D^m u\|_{L^r}^a \|u\|_{L^q}^{1-a} +
C_2\|u\|_{L^q}.
\end{align}
In particular, for any $u \in W_0^{m, r}(\Omega) \cap L^q(\Omega)$, the constant $C_2$ can be taken as zero.
\end{lemma}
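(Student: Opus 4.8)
The inequality \eqref{Inequality-GagliardoNirenberg} is classical and due to Gagliardo and Nirenberg, so the cleanest route is to reduce the statement on the bounded smooth domain $\Omega$ to the corresponding homogeneous estimate on the whole space $\mathbb{R}^n$. The plan is therefore threefold: (i) establish the scale-invariant (additive-term-free) inequality on $\mathbb{R}^n$; (ii) transfer it to $\Omega$ via a Sobolev extension operator, which is precisely where the additive constant $C_2$ is produced; and (iii) treat the Dirichlet subspace $W^{m,r}_0$ separately to see that $C_2$ can be dropped.

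First I would prove, for $u \in C_0^\infty(\mathbb{R}^n)$,
\begin{align*}
\|D^j u\|_{L^p(\mathbb{R}^n)} \leqslant C \|D^m u\|_{L^r(\mathbb{R}^n)}^a \|u\|_{L^q(\mathbb{R}^n)}^{1-a},
\end{align*}
with $a$ constrained by the dimensional identity \eqref{Inequality-dimension}. That exponent relation is itself \emph{forced}: testing the desired inequality against the dilations $u_\lambda(x) := u(\lambda x)$ gives the scalings $\|D^j u_\lambda\|_{L^p} = \lambda^{j-n/p}\|D^j u\|_{L^p}$, $\|D^m u_\lambda\|_{L^r} = \lambda^{m-n/r}\|D^m u\|_{L^r}$ and $\|u_\lambda\|_{L^q} = \lambda^{-n/q}\|u\|_{L^q}$, and matching the powers of $\lambda$ on both sides for all $\lambda>0$ yields exactly \eqref{Inequality-dimension}. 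The analytic core is then the elementary interpolation between consecutive derivatives, an estimate of the shape $\|D^{k}u\|_{L^s} \lesssim \|D^{k-1}u\|^{1/2}_{L^{s_1}}\|D^{k+1}u\|^{1/2}_{L^{s_2}}$, obtained by integrating $\int |D^{k}u|^s$ by parts once to move a derivative onto a neighbouring factor and closing with Hölder's inequality. Iterating this basic step across $j < k < m$ together with the log-convexity of $k \mapsto \log\|D^k u\|$ produces the full admissible range of $(j,m,a)$, and a density argument passes from $C_0^\infty$ to $W^{m,r}(\mathbb{R}^n)\cap L^q(\mathbb{R}^n)$.

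To return to $\Omega$ I would invoke the existence, guaranteed by the smoothness of $\partial\Omega$, of a bounded extension operator $E : W^{m,r}(\Omega) \to W^{m,r}(\mathbb{R}^n)$ with $Eu = u$ on $\Omega$ and $\|Eu\|_{W^{m,r}(\mathbb{R}^n)} \leqslant C \|u\|_{W^{m,r}(\Omega)}$, and likewise in $L^q$. Applying the whole-space inequality to $Eu$ and restricting to $\Omega$ controls $\|D^j u\|_{L^p(\Omega)}$ by $\|u\|_{W^{m,r}(\Omega)}^a \|u\|_{L^q(\Omega)}^{1-a}$; the full norm $\|u\|_{W^{m,r}(\Omega)}$ still contains the intermediate derivatives that must be reabsorbed. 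Interpolating each $\|D^k u\|_{L^r(\Omega)}$ with $0<k<m$ between $\|D^m u\|_{L^r(\Omega)}$ and $\|u\|_{L^q(\Omega)}$ and applying Young's inequality converts the bound into the two-term form \eqref{Inequality-GagliardoNirenberg}, the additive piece $C_2\|u\|_{L^q}$ arising exactly from the low-order remainders of this absorption. For $u\in W^{m,r}_0(\Omega)$ a Poincaré-type inequality controls all lower-order derivatives by $\|D^m u\|_{L^r}$ alone, so the remainder vanishes and $C_2=0$.

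The step I expect to be most delicate is the endpoint bookkeeping rather than any single computation. The parenthetical hypothesis excluding $a=1$ when $1<r<\infty$ and $m-j-\tfrac{n}{r}$ is a nonnegative integer records precisely the borderline Sobolev situation in which the scale-invariant estimate fails, one obtaining only a $\mathrm{BMO}$-type or logarithmically weakened bound; the iteration of the consecutive-derivative estimate must be organised so as never to land on this forbidden exponent. Keeping the dependence of all constants on $\Omega$ transparent (through $E$ and the Poincaré constant) and verifying that the admissibility constraints $\tfrac{j}{m}\leqslant a\leqslant 1$ and $1\leqslant q,r\leqslant\infty$ are preserved at every stage of the interpolation is where the technical care concentrates.
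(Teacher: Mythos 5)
The paper does not prove this lemma at all: it is quoted as a classical result with a citation to Nirenberg's original paper \cite{Nirenberg}, so there is no in-paper argument to compare yours against. Your sketch follows the standard textbook route (homogeneous inequality on $\mathbb{R}^n$ by scaling plus interpolation between consecutive derivatives, transfer to $\Omega$ by a Stein-type extension operator with reabsorption of the intermediate derivatives producing the additive term $C_2\|u\|_{L^q}$, and zero extension or Poincar\'e for the $W_0^{m,r}$ case), and this route is sound. One structural point deserves attention: iterating the consecutive-derivative estimate $\|D^k u\|_{L^s}\lesssim\|D^{k-1}u\|_{L^{s_1}}^{1/2}\|D^{k+1}u\|_{L^{s_2}}^{1/2}$ together with log-convexity only yields the \emph{balanced} exponent $a=j/m$ in \eqref{Inequality-dimension}; to reach the full range $j/m<a\leqslant 1$ you must bring in the Sobolev embedding theorem as the $a=1$ endpoint (this is exactly where the excluded borderline case $m-j-\tfrac{n}{r}\in\mathbb{Z}_{\geqslant 0}$ enters) and then interpolate between the two endpoints. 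Your sketch omits this second ingredient, so as written the whole-space step does not cover all admissible $a$; with that supplement the argument is the standard complete proof. Also note that a single extension operator bounded simultaneously on $W^{m,r}$ and $L^q$ is needed, which the Stein extension provides for smooth $\partial\Omega$.
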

\medskip

In particular, we introduce a useful interpolation inequality, which states that the $W^{1,4}$-norm can be controlled by a H\"{o}lder-norm together with the $H^2$-norm (see \cite{GiorginiHolder}). Its proof can be found in the Appendix D.
\medskip
\begin{lemma}\label{Giorginiinequality}
Let $\Omega \subset \mathbb{R}^n$, $n=2,3$, be a bounded domain with smooth boundary. Given $\gamma \in (0,1)$, there exists $\xi \in (\frac{1}{2},1)$ such that for any $u \in H^2(\Omega) \cap C^{\gamma}(\overline{\Omega})$, the following inequality holds:
\begin{align} \label{GiorginiHolder}
\| u \|_{W^{1,4}} \leqslant C \| u \|_{C^{\gamma}(\overline{\Omega})}^{\xi} \| u \|_{H^2}^{1 - \xi}.
\end{align} 
\end{lemma}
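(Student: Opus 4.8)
The plan is to read \eqref{GiorginiHolder} as an interpolation inequality for the Banach couple $(C^{\gamma}(\overline{\Omega}), H^2(\Omega))$, in which $W^{1,4}(\Omega)$ appears as an intermediate space. As a preliminary reduction I would extend $u$ from $\Omega$ to $\mathbb{R}^n$ by a single extension operator that is bounded simultaneously on $C^{\gamma}$ and on $H^2$ (a Stein-type extension is available since $\partial\Omega$ is smooth); all estimates are then performed on $\mathbb{R}^n$ and restricted back, with constants depending only on $\Omega$ and $\gamma$.

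The heart of the argument is the identification of the intermediate space. Writing $C^{\gamma} = B^{\gamma}_{\infty,\infty}$ and $H^2 = B^{2}_{2,2}$ in the Besov scale, for each weight $\theta \in (0,1)$ placed on $H^2$ one has the interpolation inequality
\[
\| u \|_{X_\theta} \leqslant C \, \| u \|_{C^{\gamma}(\overline{\Omega})}^{\,1-\theta} \, \| u \|_{H^2}^{\,\theta},
\]
where $X_\theta = W^{s_\theta,p_\theta}$ is the fractional Sobolev (Slobodeckij) space with smoothness $s_\theta = (1-\theta)\gamma + 2\theta$ and integrability determined by $p_\theta^{-1} = \theta/2$, i.e. $p_\theta = 2/\theta$. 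I would then restrict $\theta$ to the interval $[\tfrac{1-\gamma}{2-\gamma}, \tfrac12)$, which is nonempty precisely because $\gamma > 0$. For such $\theta$ one has $s_\theta \geqslant 1$ and $p_\theta \geqslant 4$, so that the Sobolev embedding on the bounded domain gives $W^{s_\theta,p_\theta}(\Omega) \hookrightarrow W^{1,p_\theta}(\Omega) \hookrightarrow W^{1,4}(\Omega)$ (the last inclusion because $p_\theta \geqslant 4$ and $|\Omega| < \infty$), uniformly for $n = 2,3$. Combining the embedding with the displayed inequality yields \eqref{GiorginiHolder} with $\xi = 1-\theta$; as $\theta$ sweeps $[\tfrac{1-\gamma}{2-\gamma}, \tfrac12)$, the exponent runs through $(\tfrac12, \tfrac{1}{2-\gamma}] \subset (\tfrac12, 1)$, which is exactly the asserted range. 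Testing the inequality on highly oscillatory profiles $u \sim \sin(\lambda x_1)$ (for which $\|u\|_{W^{1,4}} \sim \lambda$, $\|u\|_{C^{\gamma}} \sim \lambda^{\gamma}$, $\|u\|_{H^2} \sim \lambda^2$) shows that $\xi \leqslant \tfrac{1}{2-\gamma}$ is also necessary, so this range is sharp.

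The step I expect to be the main obstacle is making the identification of $X_\theta$, together with the displayed interpolation inequality, fully rigorous: the Hölder endpoint $C^{\gamma} = B^{\gamma}_{\infty,\infty}$ is delicate for the complex method, since it is nonseparable and $C^{\infty}(\overline{\Omega})$ is not dense in it. I would resolve this by using real interpolation and the elementary embeddings relating the real interpolation spaces of the couple to the Besov and Slobodeckij scales, or equivalently by the Triebel--Lizorkin square-function characterization of $W^{1,4} = F^{1}_{4,2}$ applied to a Littlewood--Paley decomposition $u = \sum_j \Delta_j u$, estimating each block by $\|\Delta_j u\|_{L^{\infty}} \leqslant C 2^{-j\gamma}\|u\|_{C^{\gamma}}$ and $\|\Delta_j u\|_{L^2} \leqslant C 2^{-2j}\|u\|_{H^2}$. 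I note that a crude block-by-block summation (an estimate in $B^{1}_{4,1}$) only delivers the weaker exponent $\tfrac{1}{3-2\gamma}$, which fails to lie in $(\tfrac12,1)$ for small $\gamma$; capturing the integrability gain that pushes $\xi$ up to $\tfrac{1}{2-\gamma}$ is precisely where the finer $F$-space structure (or the full strength of complex/real interpolation) is needed. The remaining points --- the simultaneous boundedness of the extension operator and the tracking of constants --- are routine for a smooth bounded domain.
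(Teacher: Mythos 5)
Your argument is correct and reaches the same exponent range $\xi\in(\tfrac12,\tfrac{1}{2-\gamma})$ as the paper, but by a genuinely different route. The paper performs the interpolation at the \emph{target} end: it invokes the fractional Gagliardo--Nirenberg inequality of Brezis--Mironescu to write $\|u\|_{W^{1,4}}\leqslant C\|u\|_{W^{s,p}}^{\xi}\|u\|_{H^2}^{1-\xi}$ with $s=2-\tfrac{1}{\xi}\in(0,\gamma)$ and $p=\tfrac{4\xi}{2\xi-1}$, and then disposes of the H\"older endpoint by the elementary embedding $C^{\gamma}(\overline{\Omega})\hookrightarrow W^{s,p}(\Omega)$ for $s<\gamma$, proved in two lines by bounding the Gagliardo seminorm $\iint|x-y|^{-(n+(s-\gamma)p)}\,\mathrm{d}x\,\mathrm{d}y<\infty$. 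You instead interpolate at the \emph{source} end, between $B^{\gamma}_{\infty,\infty}$ and $B^{2}_{2,2}$, landing in $B^{s_\theta}_{p_\theta,p_\theta}$ with $s_\theta\geqslant 1$, $p_\theta\geqslant 4$, and then embed downward into $W^{1,4}$. What the paper's route buys is economy: the fractional Gagliardo--Nirenberg inequality is stated directly on domains, so no extension operator is needed, and the only hands-on work is the seminorm computation; the price is a black-box citation. What your route buys is self-containedness at the dyadic level --- the block estimate $2^{js_\theta}\|\Delta_j u\|_{L^{p_\theta}}\leqslant(2^{j\gamma}\|\Delta_j u\|_{L^\infty})^{1-\theta}(2^{2j}\|\Delta_j u\|_{L^2})^{\theta}$ followed by H\"older in $j$ (using $\theta p_\theta=2$) is elementary and correctly sidesteps the non-density issue you flag for the complex method --- at the price of the extension to $\mathbb{R}^n$ and the identification $W^{1,4}=F^{1}_{4,2}$. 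Your sharpness check via oscillatory profiles is a nice addition the paper does not contain. One small repair: you should take $\theta$ strictly greater than $\tfrac{1-\gamma}{2-\gamma}$, since at the endpoint $s_\theta=1$ the embedding $B^{1}_{p,p}\hookrightarrow W^{1,p}$ fails for $p>2$ (the inclusion goes the other way); this only shrinks your range of $\xi$ to the open interval $(\tfrac12,\tfrac{1}{2-\gamma})$, exactly the paper's, and is harmless since the lemma asks only for one admissible $\xi$.
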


Finally, we recall some \emph{a priori} estimates for the phase function $\phi$ (see \cite{Giorgini2019,inequality}).
\medskip

\begin{lemma}[$H^2$-estimates for $\phi$] \label{phipriori}
 Let $\Omega\subset \mathbb{R}^2$ be a bounded domain  with smooth boundary. Assume that $\phi$, $\mu$ are smooth functions satisfying the equation (\ref{CH2}) with homogeneous Neumann boundary condition $\partial_\mathbf{n}\phi=0$. Then we have
\begin{align} \label{Inequality-phi1}
	&\| \phi \|_{H^2}^2 \leqslant C \| \phi\|^2 + C (\nabla \mu, \nabla \phi)
	\leqslant C \| \phi\|^2 + C \|\nabla \mu\| \| \nabla \phi\|,
\\
\label{Inequality-phi2}
	&\| \phi \|_{H^2}^2 \leqslant C \| \phi\|^2 + \|\nabla \mu\|^2.
\end{align}
\end{lemma}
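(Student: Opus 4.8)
The plan is to test the constitutive relation (\ref{CH2}) with $-\Delta\phi$, exploit the semi-convexity of the potential to control the nonlinear term, and then invoke elliptic regularity for the homogeneous Neumann Laplacian to convert information on $\|\Delta\phi\|$ into control of the full $H^2$-norm. The logarithmic structure of $W$, far from being an obstacle here, turns out to be exactly what makes the estimate close.

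First I would multiply $\mu = -\Delta\phi + W'(\phi)$ by $-\Delta\phi$ and integrate over $\Omega$. Using the boundary condition $\partial_\mathbf{n}\phi = 0$, integration by parts gives $-(\mu,\Delta\phi) = (\nabla\mu,\nabla\phi)$ on the left, while on the right $-(W'(\phi),\Delta\phi) = \int_\Omega W''(\phi)|\nabla\phi|^2\,\dx$ (the boundary contribution again vanishes). This produces the identity $(\nabla\mu,\nabla\phi) = \|\Delta\phi\|^2 + \int_\Omega W''(\phi)|\nabla\phi|^2\,\dx$. Writing $W = F - \tfrac{B}{2}\phi^2$ as in (\ref{Wphi})--(\ref{F}) with $F$ convex, the genuinely singular contribution $\int_\Omega F''(\phi)|\nabla\phi|^2\,\dx$ is nonnegative and may simply be discarded; equivalently, using $W'' \geqslant -\alpha$ one gets $\|\Delta\phi\|^2 \leqslant (\nabla\mu,\nabla\phi) + \alpha\|\nabla\phi\|^2$.

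Next I would remove the gradient term on the right. Integrating by parts once more with $\partial_\mathbf{n}\phi = 0$ gives $\|\nabla\phi\|^2 = -(\phi,\Delta\phi) \leqslant \|\phi\|\,\|\Delta\phi\|$, and Young's inequality absorbs the resulting $\|\Delta\phi\|^2$ into the left-hand side. Combined with the Cauchy--Schwarz bound $(\nabla\mu,\nabla\phi) \leqslant \|\nabla\mu\|\,\|\nabla\phi\|$, this yields $\|\Delta\phi\|^2 \leqslant C(\nabla\mu,\nabla\phi) + C\|\phi\|^2$; choosing the Young parameters so as to keep unit coefficient in front of the dissipative term gives the sharper $\|\Delta\phi\|^2 \leqslant \|\nabla\mu\|^2 + C\|\phi\|^2$. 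Finally, I would apply the standard elliptic regularity estimate for the homogeneous Neumann problem (\ref{Pre-Nuemannproblem}), namely $\|\phi\|_{H^2} \leqslant C(\|\Delta\phi\| + \|\phi\|)$ valid for every $\phi \in H^2$ with $\partial_\mathbf{n}\phi = 0$ (cf. (\ref{Pre-V0elliptic})), to pass from $\|\Delta\phi\|$ to the full $H^2$-norm; substituting the two bounds for $\|\Delta\phi\|^2$ then delivers (\ref{Inequality-phi1}) and (\ref{Inequality-phi2}) respectively.

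The point requiring care — rather than a deep obstacle — is the justification of the identity $-(W'(\phi),\Delta\phi) = \int_\Omega W''(\phi)|\nabla\phi|^2\,\dx$ in the presence of the logarithmic singularity. This relies on $\phi$ being smooth with values strictly inside $(-1,1)$, so that $W'(\phi)$ and $W''(\phi)$ are bounded on $\overline{\Omega}$ and the boundary integral $\int_{\partial\Omega} W'(\phi)\,\partial_\mathbf{n}\phi\,\d S$ is well defined and vanishes. The structural reason the argument succeeds is precisely that the only potentially large quantity, $\int_\Omega F''(\phi)|\nabla\phi|^2\,\dx$, carries a favorable sign and can be thrown away, so that all remaining steps reduce to elementary interpolation and Young inequalities.
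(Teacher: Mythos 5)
Your proposal is correct and follows essentially the same route as the paper: test \eqref{CH2} with $-\Delta\phi$, use the semi-convexity $W''\geqslant -\alpha$ to discard the singular convex contribution, interpolate $\|\nabla\phi\|^2\leqslant \|\phi\|\,\|\Delta\phi\|$ with Young's inequality, and conclude via elliptic regularity for the Neumann problem. The only difference is that you spell out the interpolation and absorption steps that the paper leaves as "an interpolation together with H\"older's and Young's inequalities."
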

\begin{proof}
Multiplying (\ref{CH2}) by  $- \Delta \phi$, and integrating over $\Omega$, we get
\begin{align} \label{Proof-phi1}
\| \Delta \phi\|^2 = -W^{\prime \prime} (\phi) \| \nabla \phi\|^2 + (\nabla \mu, \nabla \phi)
\leqslant \alpha \| \nabla \phi\|^2 + (\nabla \mu, \nabla \phi).
\end{align}
Thanks to the standard elliptic estimates for the Neumann problem, it follows that
\begin{align}  \label{Proof-phi2}
\| \phi\|_{H^2}^2 \leqslant C \left( \alpha \| \nabla \phi\|^2 + (\nabla \mu, \nabla \phi) + \|\phi\|^2\right).
\end{align}
An interpolation for $\| \nabla \phi \|$ together with H\"older's and Young's inequalities completes the proof.
\end{proof}

\subsection{Main Results} \label{Pre-Results}
For the sake of simplicity (and also without loss of generality, see e.g. \cite{HW2017} for detailed modification of these coefficients), in this paper, we make the following assumption on the coefficients: \smallskip
\begin{itemize}
\item Assume $\nu, \kappa$ are second-order differentiable functions with positive lower bounds $\underline{\nu}$, $\underline{\kappa}$ and upper bounds $\overline{\nu}$, $\overline{\kappa}$, respectively. Moreover, we assume $\nu^{\prime},\nu^{\prime\prime},\kappa^{\prime},\kappa^{\prime\prime}$ are bounded.
\end{itemize}
\medskip

Next, we introduce the notion of weak and strong solutions.
\medskip

\begin{definition}[Weak Solutions]
Let $T \in(0,+\infty)$. Suppose that the initial data satisfy $\u_0 \in \L(\Omega)$, $\phi_0 \in H^1(\Omega)$ with  $\left\|\phi_0\right\|_{L^{\infty}} \leqslant 1$, $\left|\overline{\phi_0}\right|<1$, and $\theta_0 \in L^{\infty}(\Omega) \cap H_0^1(\Omega)$. We call $(\u, \phi, \mu, \theta)$ a weak solution to problem (\ref{NSCHM})-(\ref{boundary}) on $[0, T]$, if the following conditions are satisfied:
\begin{align*}
&\u \in L^{\infty}\left(0, T ; \boldsymbol{L}_{\sigma}^2(\Omega)\right) \cap L^2\left(0, T ; \V(\Omega)\right) \cap H^{1}\left(0, T ; \V^{\prime}(\Omega)\right), \\
&\phi \in L^{\infty}\left(0, T ; H^1(\Omega)\right) \cap L^4\left(0, T ; H^2(\Omega)\right) \cap L^2\left(0, T ; W^{2, p}(\Omega)\right) \cap H^1\left(0, T ; (H^1(\Omega))^{\prime}\right), \\
&\mu \in L^2\left(0, T ; H^1(\Omega)\right), \\
&\theta \in L^{\infty}\left(0, T ; L^{\infty}(\Omega) \cap H_0^1(\Omega) \right) \cap L^2\left(0, T ; H^2(\Omega)\right) \cap H^1\left(0, T ; L^2(\Omega)\right), \\
&\phi \in L^{\infty}(\Omega \times(0, T)), \quad \text { with } |\phi(x, t)|<1 \  \text { a.e. in } \Omega \times(0, T),
\end{align*}
where $p \geqslant 2$ is arbitrary. Then,
\begin{align}
\left\langle\u_t, \v\right\rangle_{V_{\sigma}}+(\u \cdot \nabla \u, \v)+( 2 \nu(\theta) \D \u, \nabla \v) &=\int_{\Omega}[\lambda(\theta) \nabla \phi \otimes \nabla \phi]: \nabla \v\, \mathrm{d} x +
\int_{\Omega} \theta \g \cdot \v\, \mathrm{d} x,
\label{weaku} \\
\left\langle\phi_t , \xi\right\rangle_{H^1}+(\u \cdot \nabla \phi, \xi) + (\nabla \mu, \nabla \xi)&=0,
\label{weakphi}
\end{align}
for all $\v \in \V$, $\xi \in H^1(\Omega)$ and almost all $t \in (0,T)$, besides,
\begin{align}
&\mu = - \Delta \phi + W^{\prime}(\phi),\\
&\theta_t+\u \cdot \nabla \theta-\nabla \cdot(\kappa(\theta) \nabla \theta) =0,
\label{weaktheta}
\end{align}
 almost everywhere in $\Omega\times (0,T)$. In \eqref{weaku}, $ \g$ denotes the abbreviation of $\mathrm{Ra} g \mathbf{e}_2$.
 Moreover, we have $ \partial_\mathbf{n} \phi   =  \theta = 0$ almost everywhere on $\partial\Omega\times (0,T)$, and the initial conditions (\ref{initial}) are satisfied almost everywhere in $\Omega$.
\end{definition}
\medskip

\begin{definition}[Strong Solutions]
Let $T \in (0, + \infty)$. Suppose that the initial data satisfy $\u_0 \in \V$, $\phi_0 \in H^2(\Omega)$ with $\partial_\mathbf{n}\phi_0=0$ on $\partial\Omega$, $\left\|\phi_0\right\|_{L^{\infty}} \leqslant 1$, $\left|\overline{\phi_0}\right|<1$, $\mu_0 := -\Delta \phi_0 + W^{\prime}(\phi_0) \in H^1(\Omega)$ and $\theta_0 \in H^2(\Omega) \cap H_0^1(\Omega)$. We call $(\u,\phi,\mu,\theta)$ a strong solution to problem (\ref{NSCHM})-(\ref{boundary}) on $[0, T]$, if
\begin{align*}
&\u \in L^{\infty}\left(0, T ; \V(\Omega)\right)
\cap L^2\left(0, T ; \boldsymbol{H}^2(\Omega)\right)
\cap H^1\left(0, T ; \boldsymbol{L}_\sigma^{2}(\Omega)\right),  \\
&\phi \in L^{\infty}(0,T; H^3(\Omega)) \cap
L^2(0,T; H^4(\Omega)) \cap
H^1(0,T; H^1(\Omega)),  \\
&\mu \in L^{\infty}(0,T; H^1(\Omega)) \cap
L^2(0,T; H^3(\Omega)) \cap
H^1(0,T; (H^1(\Omega))^{\prime}), \\
&\theta \in L^{\infty}(0,T; H^2(\Omega)) \cap
L^2(0,T; H^3(\Omega)) \cap
W^{1,\infty} (0,T; L^2(\Omega)) \cap
H^1(0,T; H_0^1(\Omega)), \\
&\phi \in L^{\infty}(\Omega \times(0, T)), \quad \text { with }\ |\phi(x, t)|<1\ \text { a.e. in } \Omega \times(0, T),
\end{align*}
and $(\u,\phi,\mu,\theta)$ satisfy equations (\ref{NSCHM})-(\ref{BOUSSINESQ}) almost everywhere in $\Omega \times (0,T)$ with initial and boundary conditions (\ref{initial}), (\ref{boundary}).	
\end{definition}
\medskip

Now we are in a position to state the main results of the paper.
\medskip

\begin{thm}[Existence of a Global Weak Solution] \label{2dweak-exist}
Suppose that $\u_0 \in \L(\Omega)$, $\phi_0 \in H^{1}(\Omega)$ with $\left\|\phi_0\right\|_{L^{\infty}} \leqslant 1$,
$\left|\overline{\phi_0}\right|<1$, and $\theta_0 \in L^{\infty}(\Omega) \cap H_0^{1}(\Omega)$.
Problem (\ref{NSCHM})-(\ref{boundary})
admits a unique global weak solution on $[0,\infty)$. If in addition, $\theta_0 \in C^{\gamma}(\overline{\Omega})$ for some $\gamma \in (0,1)$, then we have
\begin{align}
&\| \u \|_{L^{\infty}(0, \infty; \L(\Omega))} + \| \phi\|_{L^{\infty}(0, \infty; H^1(\Omega))} +\| \theta \|_{L^{\infty}(0, \infty; C^{\beta}(\overline{\Omega}) \cap H^1(\Omega))} + \sup\limits_{\tau \geqslant 0} \| \u\|_{L^2 ( \tau, \tau + 1 ; \V(\Omega))} \notag \\
&\quad + \sup\limits_{\tau \geqslant 0} \| \u\|_{L^4 ( \tau, \tau + 1 ; \boldsymbol{L}^4(\Omega))} +
\sup\limits_{\tau \geqslant 0} \| \mu \|_{L^2 ( \tau, \tau + 1 ; H^1(\Omega))} +
\sup\limits_{\tau \geqslant 0} \| \theta \|_{L^2( \tau, \tau + 1 ; H^2(\Omega))}
\leqslant C,
\end{align}
for some $ \beta \in (0, \gamma]$. Here, the positive constant $C$ depends on the initial data, $\Omega$ and parameters of the system. 
\end{thm}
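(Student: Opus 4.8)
The plan is to construct the solution by a regularized semi-Galerkin scheme, derive a priori bounds that are uniform in the approximation parameters (and, under the extra hypothesis, uniform in time), and then pass to the limit by compactness. First I would replace the singular $W$ by a family of globally defined potentials $W_\epsilon$ with $W_\epsilon''\geq-\alpha$ and quadratic growth, project the momentum equation onto the span $\V_m$ of the first $m$ eigenfunctions of the Stokes operator $S$, and solve the Cahn--Hilliard pair $(\phi_m,\mu_m)$ and the temperature equation for $\theta_m$ against the finite-dimensional velocity. Local solvability of the resulting coupled ODE/PDE system follows from the Cauchy--Lipschitz theorem, and the estimates below make the approximate solutions global.

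Two pointwise bounds drive everything. The maximum principle applied to \eqref{weaktheta} with the homogeneous Dirichlet condition and $\nabla\cdot\u=0$ gives $\|\theta(t)\|_{L^\infty}\leq\|\theta_0\|_{L^\infty}$, which bounds $\nu(\theta),\kappa(\theta),\lambda(\theta)$ uniformly, while the singular structure of $W$ forces $|\phi|<1$ and $\|\phi\|_{L^\infty}\leq1$. Testing \eqref{weaku} with $\u$ and \eqref{weakphi} with $\mu$ and adding, the transport term $(\u\cdot\nabla\phi,\mu)$ combines with the capillary stress to produce
\begin{equation*}
\frac{\d}{\dt}\Big[\tfrac12\|\u\|^2+\mathcal{E}\Big]+2\!\int_\Omega\!\nu(\theta)|\D\u|^2\,\dx+\|\nabla\mu\|^2=\int_\Omega(\lambda(\theta)-1)(\nabla\phi\otimes\nabla\phi):\nabla\u\,\dx+\int_\Omega\theta\,\g\cdot\u\,\dx,
\end{equation*}
where $\mathcal{E}=\int_\Omega(\tfrac12|\nabla\phi|^2+W(\phi))\,\dx$. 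The mismatch caused by the non-constant surface tension is bounded by $C\|\nabla\phi\|_{L^4}^2\|\nabla\u\|$ thanks to the $L^\infty$ bound on $\theta$; using Lemma \ref{phipriori} and the Gagliardo--Nirenberg inequality to control $\|\nabla\phi\|_{L^4}^2$ in terms of $\|\phi\|_{H^1}$ and $\|\nabla\mu\|$, and estimating the buoyancy term by the $L^\infty$ bound on $\theta$, Young's inequality absorbs the bad factors into the dissipation $\underline{\nu}\|\D\u\|^2+\|\nabla\mu\|^2$ (Korn). This yields $\u\in L^\infty(\L)\cap L^2(\V)$, $\phi\in L^\infty(H^1)$ and $\mu\in L^2(H^1)$; Lemma \ref{phipriori} then upgrades $\phi$ to $L^4(0,T;H^2)$, and the elliptic theory for the singular Neumann problem promotes this to $L^2(0,T;W^{2,p})$ together with an $L^2$ bound on $W'(\phi)$.

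Under the additional assumption $\theta_0\in C^{\gamma}(\overline{\Omega})$, parabolic De Giorgi--Nash--Moser theory for the convection--diffusion equation \eqref{weaktheta} with bounded velocity yields a uniform-in-time Hölder bound $\theta\in L^\infty(0,\infty;C^{\beta}(\overline{\Omega}))$ for some $\beta\in(0,\gamma]$. To reach $\theta\in L^\infty(H^1)$ and a local-in-time $H^2$ bound I would test \eqref{weaktheta} with $-\Delta\theta$; the convective term and the quasilinear term $\int_\Omega\kappa'(\theta)|\nabla\theta|^2\Delta\theta\,\dx$ are both controlled by $\|\nabla\theta\|_{L^4}^2\|\Delta\theta\|$, and here the interpolation inequality \eqref{GiorginiHolder} is decisive: $\|\nabla\theta\|_{L^4}\leq C\|\theta\|_{C^{\gamma}(\overline{\Omega})}^{\xi}\|\theta\|_{H^2}^{1-\xi}$ with $\xi>\tfrac12$ produces a strictly sub-quadratic power of $\|\Delta\theta\|$, so that with the uniform $C^{\beta}$ bound and the Ladyzhenskaya embedding $\u\in L^4(\boldsymbol{L}^4)$, Young's inequality absorbs everything into $\underline{\kappa}\|\Delta\theta\|^2$ \emph{with no smallness hypothesis on} $\theta_0$. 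Combining the three dissipation mechanisms, a uniform Gronwall argument over the intervals $(\tau,\tau+1)$ delivers all the stated uniform-in-time norms.

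Finally I would pass to the limit in the approximation parameters: the uniform bounds give weak and weak-$\ast$ limits, the equations supply $\u_t\in L^2(\V')$, $\phi_t\in L^2((H^1)')$ and $\theta_t\in L^2(L^2)$, and the Aubin--Lions--Simon lemma provides the strong convergence of $\u$, $\nabla\phi$ and $\theta$ needed to identify the nonlinear terms; the $L^p$ control of $W_\epsilon'(\phi_\epsilon)$ and the strict bound $|\phi|<1$ are recovered by the usual monotonicity and lower-semicontinuity argument for the singular potential, and the time regularity guarantees attainment of the initial data. I expect the main obstacle to be closing the estimates in the presence of the temperature-dependent surface tension: the mismatch term $\int_\Omega(\lambda(\theta)-1)(\nabla\phi\otimes\nabla\phi):\nabla\u\,\dx$ and, above all, the temperature $H^1$ estimate, where avoiding any smallness assumption on $\theta_0$ hinges entirely on the sub-quadratic gain $\xi>\tfrac12$ in \eqref{GiorginiHolder} together with the higher-order dissipation $\|\nabla\mu\|$ furnished by the Cahn--Hilliard equation. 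Uniqueness of the weak solution is not established here; it follows from the continuous-dependence estimate of Theorem \ref{2dweak-unique}.
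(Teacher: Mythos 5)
Your overall architecture (a semi-Galerkin scheme in the velocity only, an energy law obtained by combining the $\u$-test of the momentum equation with the $\mu$-test of the Cahn--Hilliard equation, the uniform H\"older bound for $\theta$ plus the interpolation inequality \eqref{GiorginiHolder} with $\xi>\frac12$ to absorb $\int_\Omega\kappa'(\theta)|\nabla\theta|^2\Delta\theta\,\dx$ without any smallness of $\theta_0$, uniform Gronwall, Aubin--Lions) coincides with the paper's. There is, however, one genuine gap. You propose to first replace $W$ by globally defined approximations $W_\epsilon$ of quadratic growth, while at the same time listing $\|\phi\|_{L^\infty}\leqslant 1$ as one of the ``two pointwise bounds that drive everything.'' These are incompatible at the stage where the bound is needed: for the approximate problem with $W_\epsilon$ the order parameter has no $L^\infty$ bound (the fourth-order Cahn--Hilliard equation has no maximum principle, and the $L^\infty$ bound comes only from the singularity of $W$). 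Without it, the surface-tension mismatch term can only be handled via $\|\nabla\phi\|_{\boldsymbol{L}^4}^2\leqslant C\|\nabla\phi\|\,\|\phi\|_{H^2}$ and Lemma \ref{phipriori}, and Young's inequality then leaves a remainder of the type $C\|\nabla\phi\|^2\|\nabla\mu\|^2$ (equivalently $C\|\nabla\phi\|^6$) that cannot be absorbed by the dissipation $\|\nabla\mu\|^2$; one obtains only $\mathcal{E}'\lesssim 1+\mathcal{E}^3$, i.e.\ local-in-time control of the approximations, not global existence and certainly not the uniform-in-time bounds claimed in the statement. The paper avoids this precisely by \emph{not} regularizing the potential: the convective Cahn--Hilliard subsystem is solved with the logarithmic $W$ itself (Lemma \ref{semi:phimlemma}, via \cite{Abels2009}), so $\|\phi^m\|_{L^\infty}\leqslant 1$ holds already at the approximate level and the Gagliardo--Nirenberg estimate $\|\nabla\phi\|_{\boldsymbol{L}^4}^2\leqslant C\|\phi\|_{L^\infty}\|\phi\|_{H^2}$ closes the energy inequality \eqref{phi m and nabla u m}--\eqref{weak estimate 1}. (Regularization is harmless in \cite{MBoussinesq}, where $\lambda$ is constant and the mismatch term is absent; here it is fatal.) To repair your argument, drop the regularization of $W$ and solve the singular Cahn--Hilliard subsystem exactly.

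A second, smaller point: since $\phi$ and $\theta$ are not projected, the coupled approximate system is not a system of ODEs, and the Cauchy--Lipschitz theorem alone does not produce a local solution. One needs a fixed point argument (the paper uses Schauder's theorem on the map $\v^m\mapsto(\phi^m,\theta^m)\mapsto\u^m$), and the required continuity of $\theta^m$ with respect to $\v^m$ is exactly where the two-dimensional restriction enters (see the Remark following Theorem \ref{2dweak-exist}). Your proposal should record this step rather than appeal to Cauchy--Lipschitz.
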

\medskip

\begin{remark}
The conclusion of Theorem \ref{2dweak-exist} only holds in two dimensions. In the three-dimensional case, the approximate solution $\theta^m$ in the semi-Galerkin scheme (\ref{semiu})-(\ref{semiboundary}) is not continuous with respect to the velocity field $\v^m$ (see Appendix A for details). Nevertheless, if we further assume that $\theta^m(\cdot, 0) \in H^2(\Omega)$, the approximate system (\ref{semiu})-(\ref{semiboundary}) will admit a local solution with suitable regularity. The three-dimensional case will be analyzed in a forthcoming paper.
\end{remark}
\medskip

Thanks to the $L^4$-estimate (\ref{pressure}) for the pressure in the Stokes problem, we can prove the following continuous dependence result, which directly yields uniqueness of the weak solution. 
\medskip

\begin{thm}[Continuous Dependence] \label{2dweak-unique}
Let $(\u_1,\phi_1,\mu_1,\theta_1)$, $(\u_2,\phi_2,\mu_2,\theta_2)$ be two weak solutions to problem (\ref{NSCHM})-(\ref{boundary}) defined on $[0,T]$ subject to initial data $(\u_{01},\phi_{01},\theta_{01})$, $(\u_{02},\phi_{02},\theta_{02})$, respectively. Write
\begin{align*}
(\u, \phi, \theta) &= (\u_{1} - \u_{2},\phi_{1} - \phi_{2}, \theta_{1} - \theta_{2}), \\
(\u_0,\phi_0,\theta_0)   &= (\u_{01} - \u_{02},\phi_{01} - \phi_{02},\theta_{01} - \theta_{02}).
\end{align*}
Then we have
\begin{align} \label{continuousdependence}
\|\u(t)\|_{\V^{\prime}}^2 + \|\phi(t) - \overline{\phi}(t)\|_{V_0^{\prime}}^2 + \|\theta(t)\|^2 \leqslant (\|\u_0\|_{\V^{\prime}}^2 + \|\phi_0 - \overline{\phi_0}\|_{V_0^{\prime}}^2 + \|\theta_0\|^2) e^{C_T},
\end{align}
for all $t\in[0,T]$.
Here, $C_T > 0$ is a constant depending on norms of the initial data, $T$, $\Omega$ and parameters of the system.
\end{thm}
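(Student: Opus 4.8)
The plan is to control the differences in precisely the weak norms appearing in \eqref{continuousdependence}: $\u$ in $\V^{\prime}$, the zero-mean part of $\phi$ in $V_0^{\prime}$, and $\theta$ in $L^2$. Subtracting the two weak formulations \eqref{weaku}, \eqref{weakphi}, \eqref{weaktheta}, the nonlinearities split in the usual bilinear way, e.g. $\u_1\cdot\nabla\u_1-\u_2\cdot\nabla\u_2=\u\cdot\nabla\u_1+\u_2\cdot\nabla\u$ and $\nu(\theta_1)\D\u_1-\nu(\theta_2)\D\u_2=\nu(\theta_1)\D\u+(\nu(\theta_1)-\nu(\theta_2))\D\u_2$, with analogous splittings for the heat flux and for the capillary stress $\lambda(\theta)\nabla\phi\otimes\nabla\phi$ (recall $\lambda(\theta_1)-\lambda(\theta_2)=-\lambda_0 b\,\theta$). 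Since both phases obey the Neumann condition for $\mu$, conservation of mass gives $\overline{\phi}(t)=\overline{\phi_0}$ constant, so $\phi-\overline{\phi}\in V_0$ and $\partial_t(\phi-\overline{\phi})=\phi_t$. I would then test the velocity difference with $\w:=S^{-1}\u$, the phase difference with $A_0^{-1}(\phi-\overline{\phi})$, and the temperature difference with $\theta$, invoking the chain rules \eqref{Pre-LM2} and \eqref{Pre-LM1} to generate $\tfrac12\frac{\d}{\dt}\big(\|\u\|_{\V^{\prime}}^2+\|\phi-\overline{\phi}\|_{V_0^{\prime}}^2+\|\theta\|^2\big)$. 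The target is a differential inequality of the form $\frac{\d}{\dt}E+(\text{dissipation})\le g(t)E$ with $E$ the energy of \eqref{continuousdependence} and $g\in L^1(0,T)$, after which Gronwall yields \eqref{continuousdependence} and coincident data give uniqueness.

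First I would extract the coercive quantities. Writing $q$ for the pressure of the Stokes problem solved by $\w=S^{-1}\u$ and using $\nabla\cdot(2\D\w)=\Delta\w=\nabla q-\u$, an integration by parts turns the leading viscous contribution into
\begin{equation*}
2(\nu(\theta_1)\D\u,\nabla\w)=\int_\Omega\nu(\theta_1)|\u|^2\,\dx+\int_\Omega q\,\nu'(\theta_1)\,\u\cdot\nabla\theta_1\,\dx-2\int_\Omega\nu'(\theta_1)\,\u\cdot(\D\w\,\nabla\theta_1)\,\dx,
\end{equation*}
whose first term supplies the genuine dissipation $\underline{\nu}\|\u\|^2$. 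The phase test function produces, via $-\Delta A_0^{-1}(\phi-\overline{\phi})=\phi-\overline{\phi}$, the identity $(\nabla\mu,\nabla A_0^{-1}(\phi-\overline{\phi}))=(\mu,\phi-\overline{\phi})=\|\nabla\phi\|^2+(F'(\phi_1)-F'(\phi_2),\phi-\overline{\phi})-B\|\phi-\overline{\phi}\|^2$, giving the dissipation $\|\nabla\phi\|^2$ and the nonnegative monotone term $(F'(\phi_1)-F'(\phi_2),\phi)\ge0$, while the heat equation yields $\underline{\kappa}\|\nabla\theta\|^2$ after peeling off $((\kappa(\theta_1)-\kappa(\theta_2))\nabla\theta_2,\nabla\theta)$. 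The remaining ``easy'' couplings — the buoyancy $\int_\Omega\theta\,\g\cdot\w\le C\|\theta\|\,\|\u\|_{\V^{\prime}}$, the Navier–Stokes transport handled by $(\u_2\cdot\nabla\u,\w)=-(\u_2\cdot\nabla\w,\u)$, and the phase and heat transport terms (e.g. $(\u\cdot\nabla\phi_1,A_0^{-1}(\phi-\overline{\phi}))=-(\phi_1\u,\nabla A_0^{-1}(\phi-\overline{\phi}))$, bounded via $\|\phi_1\|_{L^\infty}\le1$) — are all controlled by the 2D Ladyzhenskaya and Gagliardo–Nirenberg inequalities and absorbed into these three dissipations, leaving time-coefficients such as $\|\nabla\u_2\|^2$ that lie in $L^1(0,T)$ by Theorem~\ref{2dweak-exist}.

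The main obstacle is the block of terms carrying the inverse Stokes operator at full strength, for which the elementary bound $\|\nabla\w\|=\|\u\|_{\V^{\prime}}$ is too weak: the viscous remainders $\int_\Omega q\,\nu'(\theta_1)\u\cdot\nabla\theta_1$ and $\int_\Omega\nu'(\theta_1)\u\cdot(\D\w\,\nabla\theta_1)$ above, and the capillary differences $-\lambda_0 b\int_\Omega\theta\,(\nabla\phi_1\otimes\nabla\phi_1):\nabla\w$ and $\int_\Omega\lambda(\theta_2)(\nabla\phi\otimes\nabla\phi_1+\nabla\phi_2\otimes\nabla\phi):\nabla\w$. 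The decisive tool is the pressure estimate \eqref{pressure}, which gives $\|q\|_{L^4}\le C\|\u\|_{\V^{\prime}}^{1/2}\|\u\|^{1/2}$ and, combined with \eqref{Pre-Stokesestimate} and Gagliardo–Nirenberg, $\|\nabla\w\|_{L^4}\le C\|\u\|_{\V^{\prime}}^{1/2}\|\u\|^{1/2}$. For instance the hardest capillary term is bounded by $C\|\theta\|\,\|\nabla\phi_1\|_{L^8}^2\,\|\nabla\w\|_{L^4}\le C\|\theta\|\,\|\phi_1\|_{H^2}^2\,\|\u\|_{\V^{\prime}}^{1/2}\|\u\|^{1/2}$, and Young's inequality (using $\|\u\|\le C$ and the $\|\u\|^2$ dissipation) converts it into $\varepsilon\|\u\|^2+C\|\u\|_{\V^{\prime}}^2+C\|\phi_1\|_{H^2}^4\|\theta\|^2$ with $\|\phi_1\|_{H^2}^4\in L^1(0,T)$; the pressure remainder $\int_\Omega q\,\nu'(\theta_1)\u\cdot\nabla\theta_1$ (the ``$I_8$'' of the Introduction) is handled identically, becoming $\varepsilon\|\u\|^2+C\|\theta_1\|_{H^2}^2\|\u\|_{\V^{\prime}}^2$ after using $\|\nabla\theta_1\|_{L^4}^4\le C\|\theta_1\|_{H^1}^2\|\theta_1\|_{H^2}^2\in L^1(0,T)$. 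Making every such inverse-Stokes term land against the $\|\u\|^2$ and $\|\nabla\theta\|^2$ dissipations with only $L^1(0,T)$-in-time prefactors on $E$ is, I expect, the crux of the proof.

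The one remaining genuinely potential-specific point is the residual mean contribution $-\overline{\phi}\int_\Omega(F'(\phi_1)-F'(\phi_2))\,\dx$ left from $(F'(\phi_1)-F'(\phi_2),\phi-\overline{\phi})$: the monotone term $(F'(\phi_1)-F'(\phi_2),\phi)\ge0$ is discarded, and the mean term is estimated through the structural control of $\overline{F'(\phi_i)}$ and the $L^1$-bounds for $F'(\phi_i)$ available for weak solutions, in the spirit of \cite{Giorgini2019,phibound} and the estimate of \cite{Giorgini}, together with the interpolation \eqref{Pre-V0interpolation} and a fraction of the $\|\nabla\phi\|^2$ dissipation (the term $B\|\phi-\overline{\phi}\|^2$ is controlled the same way). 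Collecting all contributions gives $\frac{\d}{\dt}E+\underline{\nu}\|\u\|^2+\|\nabla\phi\|^2+\underline{\kappa}\|\nabla\theta\|^2\le g(t)E$, where $g$ is an $L^1(0,T)$ combination of $\|\nabla\u_2\|^2$, $\|\phi_i\|_{H^2}^4$, $\|\theta_1\|_{H^2}^2$ and similar norms, all bounded by Theorem~\ref{2dweak-exist}. Gronwall's lemma then yields \eqref{continuousdependence} with $C_T=\int_0^T g\,\dt$, and taking $(\u_{01},\phi_{01},\theta_{01})=(\u_{02},\phi_{02},\theta_{02})$ proves uniqueness of the weak solution.
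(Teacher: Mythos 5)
Your proposal follows essentially the same route as the paper's proof: testing the differences with $S^{-1}\u$, $A_0^{-1}(\phi-\overline{\phi})$ and $\theta$, extracting the dissipation $\|\u\|^2+\|\nabla\phi\|^2+\|\nabla\theta\|^2$ via the identity $-\Delta S^{-1}\u+\nabla q=\u$ and the coercivity of $(\mu,\phi-\overline{\phi})$, invoking the $L^4$ pressure bound \eqref{pressure} for the term $\int_\Omega q\,\nu^{\prime}(\theta_1)\,\u\cdot\nabla\theta_1\,\dx$ (the paper's $I_8$), and closing with Gronwall using the $L^1(0,T)$ integrability of the coefficients supplied by Theorem \ref{2dweak-exist}. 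The only divergences are minor: the paper bounds part of the capillary difference with $\|\nabla\phi_i\|_{\boldsymbol{L}^{\infty}}$ and $\|\nabla S^{-1}\u\|$ instead of your $\|\nabla S^{-1}\u\|_{\boldsymbol{L}^4}$ (both work), and it handles the potential term in one stroke via $W^{\prime\prime}\geqslant-\alpha$, which implicitly discards the same residual mean contribution $-\overline{\phi}\int_{\Omega}\big(F^{\prime}(\phi_1)-F^{\prime}(\phi_2)\big)\,\dx$ that you isolate explicitly (harmless for uniqueness, where $\overline{\phi}=0$).
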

\medskip

Our last result concerns the existence and uniqueness of global strong solutions.
\medskip

\begin{thm}[Existence of a Global Strong Solution] \label{2dstrong}
Suppose that $\u_0 \in \V(\Omega)$, $\phi_0 \in H^{2}(\Omega)$ with $\partial_\mathbf{n}\phi_0=0$ on $\partial\Omega$, $\left\|\phi_0\right\|_{L^{\infty}} \leqslant 1$,
$\left|\overline{\phi_0}\right|<1$, $\mu_0 := -\Delta \phi_0 + W^{\prime}(\phi_0) \in H^1(\Omega)$ and $\theta_0 \in  H^2( \Omega) \cap H_0^{1}(\Omega)$. Then problem (\ref{NSCHM})-(\ref{boundary}) admits a unique global strong solution on $[0,\infty)$. Moreover, there exists some $\delta \in (0,1)$ such that
\begin{align} \label{separation}
\|\phi(t)\|_{C(\overline{\Omega})} \leqslant 1-\delta, \quad \forall\, t \geqslant 0.
\end{align}
\end{thm}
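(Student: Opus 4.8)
The plan is to start from the unique global weak solution provided by Theorems~\ref{2dweak-exist} and~\ref{2dweak-unique}, and to upgrade its regularity by deriving higher-order \emph{a priori} estimates that are uniform in time. All such estimates will be justified rigorously within the semi-Galerkin approximation of the Appendix (where the velocity is approximated and the singular convex part $F$ is replaced by a smooth regularization), and then passed to the limit by weak/weak-$*$ compactness and Aubin--Lions. The uniform-in-time \emph{weak} bounds of Theorem~\ref{2dweak-exist}---in particular $\u \in L^\infty(\L) \cap L^2_{\mathrm{uloc}}(\V)$, $\phi \in L^\infty(H^1)$ with $|\phi|<1$ a.e., $\mu \in L^2_{\mathrm{uloc}}(H^1)$, and the crucial $\theta \in L^\infty(C^\beta(\overline\Omega) \cap H^1)$ (available since $\theta_0\in H^2\hookrightarrow C^\gamma$ in two dimensions)---serve as the baseline on which everything is built. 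Because the coefficients $\nu,\lambda,\kappa$ all depend on $\theta$, the three groups of estimates below are genuinely coupled and must be closed together through a single Gronwall argument.

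For the velocity I would test the momentum equation~(\ref{NSCHM}) with $\u_t$ (equivalently, work with the Stokes operator). Writing $(2\nu(\theta)\D\u, \D\u_t) = \tfrac{\d}{\dt}(\nu(\theta)\D\u,\D\u) - (\nu'(\theta)\theta_t\,\D\u,\D\u)$ produces the dissipation $\|\u_t\|^2$ together with the energy $\|\sqrt{\nu(\theta)}\,\D\u\|^2$, while the convective term, the capillary stress $\nabla\!\cdot(\lambda(\theta)\nabla\phi\otimes\nabla\phi)$ and the buoyancy term are absorbed using Korn's inequality~(\ref{Korn}), the uniform $C^\beta$-bound on $\theta$, the $H^2$-control of $\phi$ from Lemma~\ref{phipriori}, and Gagliardo--Nirenberg~(\ref{Inequality-GagliardoNirenberg}); elliptic Stokes regularity~(\ref{Pre-Stokesestimate}) then upgrades this to $\u \in L^\infty(\V) \cap L^2(\boldsymbol H^2) \cap H^1(\L)$. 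The residual term $(\nu'(\theta)\theta_t\,\D\u,\D\u)$ is exactly what ties this estimate to the temperature. For the temperature I would recast~(\ref{BOUSSINESQ}) as $\theta_t - \kappa(\theta)\Delta\theta = \kappa'(\theta)|\nabla\theta|^2 - \u\cdot\nabla\theta$, test with $-\Delta\theta$ and, after differentiating in time, with $\theta_t$, to reach $\theta \in L^\infty(H^2) \cap L^2(H^3) \cap W^{1,\infty}(L^2) \cap H^1(H_0^1)$. The delicate term is the quadratic gradient $\kappa'(\theta)|\nabla\theta|^2$, which I would control via the interpolation~(\ref{GiorginiHolder}) of Lemma~\ref{Giorginiinequality} together with the uniform $C^\beta$-bound on $\theta$; this is precisely the device that removes any smallness assumption on $\theta_0$.

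For the phase field, the heart of the matter is to obtain $\mu \in L^\infty(H^1)$. I would differentiate the convective Cahn--Hilliard equation~(\ref{CH1}) in time and test with $A_0^{-1}\phi_t$ (legitimate since mass is conserved, so $\overline{\phi_t}=0$), using $\mu_t = -\Delta\phi_t + W''(\phi)\phi_t$ and $W''(\phi)\geqslant -\alpha$ to generate the dissipation $\|\nabla\phi_t\|^2$; this yields $\phi_t \in L^\infty(V_0') \cap L^2(V_0)$. Feeding this back into $\Delta\mu = \phi_t + \u\cdot\nabla\phi$, controlling the mean $\overline\mu$ separately and invoking elliptic regularity gives $\mu \in L^\infty(H^1)$, whence $\phi \in L^\infty(H^3) \cap L^2(H^4)$ through the Neumann problem $-\Delta\phi = \mu - W'(\phi)$ and Lemma~\ref{phipriori}. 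With $\mu \in L^\infty(H^1)$ in hand, the strict separation~(\ref{separation}) follows: in two dimensions $H^1 \hookrightarrow L^p$ for every finite $p$, so the right-hand side $\mu + B\phi$ of $-\Delta\phi + F'(\phi) = \mu + B\phi$ lies in $L^\infty(0,\infty;L^p)$, and the elliptic estimate for the singular nonlinearity proved in the Appendix then bounds $F'(\phi)$, hence $\|\phi\|_{C(\overline\Omega)}$, uniformly away from $\pm1$.

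The main obstacle I anticipate is twofold and intertwined. First, strict separation is not only the final conclusion but would also be the natural tool to tame the temperature-dependent capillary stress $\lambda(\theta)\nabla\phi\otimes\nabla\phi$; the apparent circularity is broken by observing that the $H^2$-bound on $\phi$ of Lemma~\ref{phipriori} and the $L^\infty$-bound $|\phi|<1$ already available for weak solutions suffice to drive the higher-order estimates up to $\mu\in L^\infty(H^1)$, from which separation is deduced \emph{a posteriori}. Second, because $\nu,\lambda,\kappa$ couple every equation to $\theta$ (and to $\theta_t$, through terms such as $(\nu'(\theta)\theta_t\,\D\u,\D\u)$), the velocity, temperature and phase-field estimates cannot be closed in isolation: I would sum the three differential inequalities, arrange the cross terms so that the dissipations $\|\u_t\|^2$, $\|\nabla\theta_t\|^2$ and $\|\nabla\phi_t\|^2$ dominate, and apply a uniform-in-time Gronwall argument supported by the dissipative integrals of Theorem~\ref{2dweak-exist}. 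Uniqueness of the strong solution is immediate, since every strong solution is in particular a weak solution and therefore inherits the continuous-dependence estimate of Theorem~\ref{2dweak-unique}.
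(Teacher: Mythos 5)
Your overall architecture coincides with the paper's: higher-order estimates are derived on the semi-Galerkin approximation, coupled through a single (uniform) Gronwall argument driven by the dissipative integrals of Theorem \ref{2dweak-exist}, with the separation property deduced \emph{a posteriori} from $\mu\in L^{\infty}(0,\infty;H^1)$ and uniqueness inherited from Theorem \ref{2dweak-unique}. The velocity and temperature steps are essentially identical to the paper's (testing by $\u_t$, the transform $\Theta=\int_0^\theta\kappa$, time-differentiation tested by $\theta_t$, and the interpolation \eqref{GiorginiHolder} against the uniform $C^{\beta}$-bound). The one genuinely different ingredient is the phase-field step: the paper tests the convective Cahn--Hilliard equation by $\mu_t$, obtaining $\tfrac{\d}{\dt}\|\nabla\mu\|^2$ directly at the cost of carrying the sign-indefinite coupling term $(\u\cdot\nabla\phi,\mu)$ inside the energy functional \eqref{higherenergy}; you instead differentiate in time and test by $A_0^{-1}\phi_t$ to get $\phi_t\in L^{\infty}(V_0')\cap L^2(V_0)$ and then recover $\mu\in L^{\infty}(H^1)$ by elliptic regularity from $\Delta\mu=\phi_t+\u\cdot\nabla\phi$. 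Both routes are standard and close; yours trades the indefinite energy correction for the requirement $\phi_t(0)\in V_0'$, which the hypotheses $\mu_0\in H^1$, $\u_0\in\V$, $\phi_0\in H^2$ do supply. Three small imprecisions worth flagging: (i) separation requires an $L^{\infty}$-bound on $F'(\phi)$, which comes from the $W^{1,p}$-estimate of Lemma \ref{mu and phi} (using the exponential bound on $F''$), not from an $L^p$-bound for fixed finite $p$ as your wording suggests; (ii) the $\boldsymbol{H}^2$-regularity of $\u$ must be taken from the variable-viscosity Stokes estimate (the paper's Lemma \ref{stokes}, which brings in $\|\nu(\theta)\|_{H^1}\|\nu(\theta)\|_{H^2}$ and hence $\|\Delta\theta\|\leqslant C(\|\theta_t\|+\|\nabla\u\|)$), not from the constant-coefficient estimate \eqref{Pre-Stokesestimate}; (iii) the paper's semi-Galerkin scheme does not regularize the singular part $F$ but solves the convective Cahn--Hilliard subsystem with the singular potential directly. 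None of these affects the validity of the argument.
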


\medskip

\begin{remark}
The argument in Section 4.2 yields the following higher order uniform-in-time estimates:
\begin{align*}
&\| \u \|_{L^{\infty}(0 , \infty ; \V)} + \sup\limits_{\tau \geqslant 0}	\| \u \|_{L^2(\tau , \tau + 1 ; \boldsymbol{H}^2)}
+ \sup\limits_{\tau \geqslant 0} \| \u_t \|_{L^2(\tau , \tau + 1 ; \L)}  \\
&\quad + \| \mu \|_{L^{\infty}(0 , \infty ; H^1)} + \sup\limits_{\tau \geqslant 0}	\| \mu \|_{L^2(\tau , \tau + 1 ; H^3)}
+ \sup\limits_{\tau \geqslant 0}	\| \mu_t \|_{L^2(\tau , \tau + 1 ; (H^1)^{\prime})}  \\
&\quad + \| \phi \|_{L^{\infty}(0 , \infty ; H^3)} + \sup\limits_{\tau \geqslant 0}	\| \phi \|_{L^2(\tau , \tau + 1 ; H^4)}
+ \sup\limits_{\tau \geqslant 0} \| \nabla \phi_t \|_{L^2(\tau , \tau + 1 ; \boldsymbol{L}^2)}  \\
&\quad + \| \theta \|_{L^{\infty}(0 , \infty ; H^2)} + \sup\limits_{\tau \geqslant 0}	\| \theta \|_{L^2(\tau , \tau + 1 ; H^3)}
+ \| \theta_t \|_{L^{\infty}(0 , \infty ; L^2)} \\
&\quad + \sup\limits_{\tau \geqslant 0} \| \nabla \theta_t \|_{L^2(\tau , \tau + 1 ; \boldsymbol{L}^2)} \leqslant C,
\end{align*}
where $C>0$ is a constant independent of time.
\end{remark}
\medskip 

\begin{remark}
The results in Theorems \ref{2dweak-exist}--\ref{2dstrong} can be extended to the case with a more general singular potential $W$. For instance, the following admissible setting is well-known: 
 $W\in C([-1,1])\cap C^{3}(-1,1)$ such that $W(r)=F(r)-\frac{B}{2}r^2$ with
	\begin{equation}
	\lim_{r\to \pm 1} F'(r)=\pm \infty ,\quad \text{and}\ \  F''(r)\ge A,\quad \forall\, r\in (-1,1). \notag 
	\end{equation}
Here, $A$ is a strictly positive constant, $B\in \mathbb{R}$ and we make the extension $F(r)=+\infty$ for $r\notin[-1,1]$.
Assume in addition, there exists $\varepsilon_0\in(0,1)$ such that $F''$ is nondecreasing in $[1-\varepsilon_0,1)$ and nonincreasing in $(-1,-1+\varepsilon_0]$. The convex function $F$ satisfies the growth assumption:
\begin{align} F^{\prime \prime}(r) \leq C \mathrm{e}^{C\left|F^{\prime}(r)\right|}, \quad \forall\, r \in(-1,1). \notag
\end{align}
for some positive constant $C$. We refer to \cite{Abels2009,Conti2020,GiorginiGalerkin, GiorginiOnoo,HwuHeleshaw,Giorgini2019,HeWeak,HeStrong} for further details.
\end{remark}

\section{Global Weak Solutions} \label{Globalweak}
\subsection{Existence} \label{Globalweak-exist} \begin{proof} (\textbf{Proof of Theorem \ref{2dweak-exist}.})
We first establish the existence of global weak solutions.
The proof relies on the so-called semi-Galerkin scheme, in which we only approximate the velocity field $\u$, but keep the original form of $\theta$ and the distributional form of $\phi$. This makes sense because $\theta$ admits a maximum principle, which is essential for the estimates.

The proof of Theorem \ref{2dweak-exist} consists of several steps.

\subsubsection{Construction of approximate solutions}
Let $\w_{i}(x)$, $i=1,2, \cdots$, be the eigenfunctions of the Stokes operator subject to the homogeneous Dirichlet boundary condition. We can suppose without loss of generality they form an orthonormal basis of $\L$ and an orthogonal basis of $\V$.
Let $\H_m := \operatorname{span}\left\{\w_1(x), \cdots, \w_m(x)\right\}$. Moreover, define $\Pi_m\L = \H_m$ be the orthonormal projection from $\L$ onto $\H_m$.

Let $T>0$, $\u^m(x, t)=\sum\limits_{i=1}^m g_i^m(t) \w_i(x)$. We consider the approximate system holding for arbitrary $\w^m \in \H_m$, $w \in H^1$:
\begin{align}
(\u_t^m, \w^m) + (\u^m \cdot \nabla \u^m, \w^m)+( 2 \nu(\theta^m) \D \u^m, \nabla \w^m) &= \nonumber\\
\int_{\Omega}[\lambda(\theta^m) \nabla \phi^m \otimes \nabla \phi^m]: \nabla \w^m \,\mathrm{d} x  &+\int_{\Omega} \theta^m \g \cdot \w^m \,\mathrm{d} x,
\label{semiu}\\
(\phi_t^m , w)+(\u^m \cdot \nabla \phi^m, w) + (\nabla \mu^m, \nabla w)&=0,
\label{semiphi}\\
\mu^m = - \Delta \phi^m &+ W^{\prime}(\phi^m),
\label{semimu} \\
\theta_t^m+\u^m \cdot \nabla \theta^m-\nabla \cdot(\kappa(\theta^m) \nabla \theta^m)=0, ~~  &\text{ a.e. in }(0, T) \times \Omega,
\label{semitheta}\\
\left.\u^m\right|_{t=0}=\Pi_{m}\u_0,\ \ \left.\phi^m\right|_{t=0}=\phi_0 ,\ \  \left.\theta^m\right|_{t=0}&=\theta_0,
\label{semiinitial}\\
\left. \partial_\mathbf{n} \phi^m\right|_{\partial \Omega}
= 0,\ \ \left.\theta^m\right|_{\partial \Omega} &= 0.
\label{semiboundary}
\end{align}
\begin{thm}\label{exe-approx}
Let $\u_0 \in \L(\Omega)$, $\phi_0 \in H^{1}(\Omega)$ with $\left\|\phi_0\right\|_{L^{\infty}} \leqslant 1$,
$\left|\overline{\phi_0}\right|<1$, and $\theta_0 \in H_0^{1}(\Omega) \cap L^{\infty}( \Omega)$. For any positive integer $m$, problem (\ref{semiu})-(\ref{semiboundary}) admits a local solution $(\u^m, \phi^m, \mu^m, \theta^m)$ on some interval $[0,T_m]$ with the following regularity properties:
\begin{align*}
\u^m &\in C\left([0, T_m] ; \H_m\right), \\
\phi^m &\in L^{\infty}\left(0, T_m ; H^1\right) \cap L^4\left(0, T_m ; H^2\right) \cap H^1\left(0, T_m ;\left(H^1\right)^{\prime}\right), \\
\mu^m &\in L^2\left(0, T_m ; H^1\right),\\
\theta^m &\in L^{\infty}\left(0, T_m ; H_0^1 \cap L^{\infty}\right) \cap L^2\left(0, T_m ; H^2\right).	
\end{align*}
Moreover, we have $\phi^m \in L^{\infty}(\Omega \times (0,T_m))$, $|\phi^m|<1$ almost everywhere in $\Omega \times (0,T_m),$ and $\sup \limits_{0\leqslant t\leqslant T_m} \| \phi^m(t) \|_{L^{\infty}} \leqslant 1. $
\end{thm}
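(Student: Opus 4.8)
The plan is to establish local existence by a Schauder fixed-point argument set in the finite-dimensional velocity space $C([0,T];\H_m)\cong C([0,T];\mathbb{R}^m)$, which decouples the three subproblems. Given a candidate field $\hat{\u}^m=\sum_{i=1}^m \hat g_i(t)\w_i\in C([0,T];\H_m)$, I would use it as the prescribed advection velocity in the convective Cahn--Hilliard subsystem (\ref{semiphi})--(\ref{semimu}) and in the Boussinesq equation (\ref{semitheta}), solve these two (infinite-dimensional) problems for $(\phi^m,\mu^m)$ and $\theta^m$, then substitute the resulting $\phi^m,\theta^m$ into (\ref{semiu}) to produce a new velocity $\u^m$ by solving the associated system of ODEs for the coefficients $g_i^m(t)$. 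A fixed point of the map $\mathcal{T}:\hat{\u}^m\mapsto\u^m$ is the desired local solution.

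For the Cahn--Hilliard solve, since $\hat{\u}^m$ lives in the fixed space $\H_m$ it is smooth in space and bounded in every norm by $\|\hat{\g}\|_{C([0,T];\mathbb{R}^m)}$, so the known existence theory for the convective Cahn--Hilliard equation with logarithmic potential applies (cf.\ \cite{Giorgini2019,phibound} and the elliptic problem with singular nonlinearity in the Appendix). Testing (\ref{semiphi}) by $\mu^m$ and by $\phi_t^m$ and invoking Lemma \ref{phipriori} yields $\phi^m\in L^\infty(0,T;H^1)\cap L^4(0,T;H^2)\cap H^1(0,T;(H^1)')$ and $\mu^m\in L^2(0,T;H^1)$; the singularity of $W'$ is exactly what forces $|\phi^m|<1$ a.e.\ and $\|\phi^m(t)\|_{L^\infty}\le 1$. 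For the Boussinesq solve, (\ref{semitheta}) is a quasilinear advection-diffusion equation with bounded, smooth-in-space velocity, homogeneous Dirichlet data and no source; standard quasilinear parabolic theory gives $\theta^m\in L^\infty(0,T;H_0^1)\cap L^2(0,T;H^2)$, and the maximum principle gives the crucial bound $\|\theta^m(t)\|_{L^\infty}\le\|\theta_0\|_{L^\infty}$, \emph{uniformly} in $\hat{\u}^m$. Finally, with $\phi^m,\theta^m$ frozen, (\ref{semiu}) is a system of ODEs for $g_1^m,\dots,g_m^m$ with quadratic nonlinearity and time-dependent Carath\'eodory coefficients, solvable locally by the Cauchy--Lipschitz/Carath\'eodory theorem.

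To close the fixed point, I would restrict $\mathcal{T}$ to a ball $B_R=\{\hat{\g}:\|\hat{\g}\|_{C([0,T];\mathbb{R}^m)}\le R\}$ and show it is a self-map for $T=T_m$ small. This rests on the fact that the forcing terms in (\ref{semiu}), namely $\int_\Omega[\lambda(\theta^m)\nabla\phi^m\otimes\nabla\phi^m]:\nabla\w^m\,\dx$ and $\int_\Omega\theta^m\g\cdot\w^m\,\dx$, are controlled via $\|\theta^m\|_{L^\infty}\le\|\theta_0\|_{L^\infty}$, $\|\phi^m\|_{L^\infty}\le1$ and the $L^4(0,T;H^2)$ bound on $\phi^m$ (so that $\nabla\phi^m\otimes\nabla\phi^m\in L^2(0,T;L^p)$ pairs against the smooth $\nabla\w^m$); integrating the ODE over a short interval then keeps the image inside $B_R$. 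Compactness of $\mathcal{T}(B_R)$ in $C([0,T_m];\mathbb{R}^m)$ follows from Arzel\`a--Ascoli, since the ODE yields a bound on $\dot g_j^m$ and hence equicontinuity of the images.

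The delicate step, and where I expect the main obstacle, is the continuity of $\mathcal{T}$, i.e.\ the continuous dependence of $\theta^m$ and especially $\phi^m$ on the advection field $\hat{\u}^m$. For $\theta^m$ this is a routine stability estimate for an equation linear in $\theta$. For $\phi^m$ the singular potential blocks a direct difference estimate, so I would instead argue by compactness: take $\hat{\u}^m_k\to\hat{\u}^m$ in $C([0,T];\H_m)$, use the uniform energy bounds together with the Aubin--Lions lemma and the monotonicity of $F'$ to pass to the limit in the singular term $W'(\phi^m_k)$, and identify the limit with the unique Cahn--Hilliard solution; uniqueness then forces convergence of the whole sequence, yielding continuity of the composite map into the finite-dimensional velocity component. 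Once $\mathcal{T}$ is seen to be a continuous, compact self-map of the convex set $B_R$, Schauder's theorem produces a fixed point, which is a local solution on $[0,T_m]$ with precisely the asserted regularity.
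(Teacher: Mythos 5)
Your proposal follows the paper's strategy almost exactly: fix a velocity $\v^m\in C([0,T];\H_m)$, solve the decoupled convective Cahn--Hilliard and Boussinesq subsystems (using the maximum principle $\|\theta^m\|_{L^\infty}\le\|\theta_0\|_{L^\infty}$ and the singular potential to get $\|\phi^m\|_{L^\infty}\le 1$), feed $(\phi^m,\theta^m)$ into the finite-dimensional ODE system for $\u^m$, and close with Schauder on a ball in $C([0,T_m];\H_m)$ after shrinking $T_m$; the compactness you get from Arzel\`a--Ascoli is the same as the paper's observation that the map lands in $H^1(0,T;\H_m)\hookrightarrow\hookrightarrow C([0,T];\H_m)$. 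The one place you genuinely diverge is the continuity of the fixed-point map in the $\phi$-component: you assert that the singular potential blocks a direct difference estimate and substitute a compactness-plus-uniqueness (sequential continuity) argument. That route does work for Schauder, but the premise is off: a quantitative stability estimate \emph{is} available here, and it is what the paper uses (its estimate \eqref{continuous2}, borrowed from \cite{HeWeak}). Testing the difference equation with $A_0^{-1}(\phi_1^m-\phi_2^m)$, the singular term contributes $(W'(\phi_1^m)-W'(\phi_2^m),\phi_1^m-\phi_2^m)\ge -B\|\phi_1^m-\phi_2^m\|^2$ by convexity of $F$, so the monotone part has the right sign and one gets Lipschitz dependence of $\phi^m$ on $\v^m$ in the $V_0'$ topology. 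The paper's quantitative estimates \eqref{continuous1}--\eqref{continuous4} buy a clean, explicit continuity bound \eqref{lianxu} and are reused verbatim in the strong-solution scheme; your soft-compactness version is more robust to losing monotonicity but, as written, you should make explicit that the limit identification requires strong convergence of $\nabla\phi^m_k$ in, say, $L^2(0,T;\boldsymbol{L}^4)$ (available from the $L^4(0,T;H^2)\cap H^1(0,T;(H^1)')$ bounds via Aubin--Lions) to pass to the limit in $\nabla\phi^m_k\otimes\nabla\phi^m_k$.
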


\medskip
\begin{remark} The existence of a local solution on some interval $[0,T_m]$ to problem (\ref{semiu})-(\ref{semiboundary}) can be guaranteed by a fixed point argument, see Appendix \ref{semigalerkin_weaksolution} for details. Here, we emphasize the key consideration that  $\theta^m$ satisfies the maximum principle  (see \cite{Lorca}):
\begin{align} \label{semimaximum}
\|\theta^m(t)\|_{L^{\infty}} \leqslant \|\theta_0\|_{L^{\infty}},\quad \text{for a.e.} ~ t \in [0,T_m].
\end{align}
\end{remark}

\subsubsection{\emph{a priori} estimates}
We proceed to derive estimates for approximate solutions that are uniform with respect to the approximating parameter $m$. The superscript $m$ is dropped for simplicity. In the subsequent proofs,
$C$ represents a generic constant depending only on $\Omega$ and parameters of the system unless additionally pointed out, and may be different from line to line. \smallskip

\textbf{a). Estimates for $\|\nabla \phi\|$ and $\|\u\|$.} Testing (\ref{semiphi}) by $\mu$, we obtain
\begin{equation} \label{nabla phi m}
	\frac{\mathrm{d}}{\mathrm{d} t}\left(\frac{1}{2} \| \nabla \phi \|^2 +
	\int_{\Omega} W\left(\phi\right)\mathrm{d} x \right)  +
	\left\|\nabla \mu\right\|^2 =
	-\left(\u \cdot \nabla \phi ,\mu \right) =
	(\phi \u , \nabla \mu ).
\end{equation}
Testing (\ref{semiu}) by $\u$, we get
\begin{align}
& \frac{1}{2} \frac{\mathrm{d}}{\mathrm{d} t}  \| \u \|^2  +
\int_{\Omega} 2 \nu(\theta) |\D \u|^2 \mathrm{d}x \notag \\
&\quad = a \lambda_0 \left( \nabla \phi \otimes \nabla \phi , \nabla \u \right) -
b \lambda_0 \left(\theta \nabla \phi \otimes \nabla \phi , \nabla \u \right) + ( \theta \g , \u).
\label{u m}
\end{align}
Besides, we observe that
\begin{align} \label{Kroneckeridentity}
(\nabla \phi \otimes \nabla \phi, \nabla \u) &= - (\nabla \cdot (\nabla \phi \otimes \nabla \phi) , \u) \nonumber \\
&= -\left(\Delta \phi \nabla \phi + \nabla \frac{|\nabla \phi|^2}{2}, \u \right) \nonumber \\
&= \left((\mu - W^{\prime}(\phi)) \nabla \phi , \u \right) \nonumber \\
&= (\mu \nabla \phi, \u) \nonumber \\
&= -(\phi \u, \nabla \mu). 	
\end{align}
Let $\ell>0$ be a small number to be determined later. We multiply (\ref{u m}) by $\ell$, and add it to (\ref{nabla phi m}). Keeping in mind that $\|\phi\|_{L^{\infty}} \leqslant 1$, this yields
\begin{align}
&\frac{\mathrm{d}}{\mathrm{d} t}\left(\frac{1}{2} \| \nabla \phi \|^2 +
\int_{\Omega} W\left(\phi \right)\, \mathrm{d} x + \frac{\ell}{2} \| \u \|^2   \right) +
\left\|\nabla \mu\right\|^2 +\ell
\int_{\Omega} 2 \nu(\theta) |\D \u|^2\, \mathrm{d} x
\notag \\
&\quad = (- a \lambda_0 \ell + 1) (\phi \u, \nabla \mu) -
b \lambda_0 \ell \left(\theta \nabla \phi \otimes \nabla \phi , \nabla \u\right) +
\ell ( \theta \g , \u)
\notag \\
&\quad \leqslant C \| \phi\|_{L^{\infty}} \|\u\| \|\nabla \mu\| +
|b| \lambda_0 \ell \|\theta_0\|_{L^{\infty}} \|\nabla \phi\|_{\boldsymbol{L}^4}^2 \|\nabla \u\| +
C \|\theta_0 \|_{L^{\infty}} \|\u\|
\notag \\
&\quad \leqslant C\|\u\| \|\nabla \mu\| + C\|\u\| +
C |b| \lambda_0 \ell \|\theta_0\|_{L^{\infty}} \| \phi\|_{L^{\infty}} \|\phi\|_{H^2} \| \nabla \u\|
\notag \\
&\quad \leqslant \frac{\ell \underline{\nu}}{2}\|\nabla \u\|^2 +
\frac{1}{2} \|\nabla \mu\|^2 + C\|\u\|^2 + C + \frac{C |b|^2 \lambda_0^2 \|\theta_0\|_{L^{\infty}}^2}{2\underline{\nu} } \ell \| \phi\|_{H^2}^2.
\label{phi m and nabla u m}
\end{align}
Here, we have used the Gagliardo-Nirenberg inequality. Applying Lemma \ref{phipriori} to  (\ref{phi m and nabla u m}), and, by Korn's inequality \eqref{Korn}, we deduce that
\begin{align}
&\frac{\mathrm{d}}{\mathrm{d} t}\left(\frac{1}{2} \| \nabla \phi \|^2 +
\int_{\Omega} W\left(\phi \right)\, \mathrm{d} x + \frac{\ell}{2} \| \u \|^2   \right) +
\frac{1}{2} \left\|\nabla \mu\right\|^2 +
\frac{\ell \underline{\nu}}{2} \|\nabla \u\|^2
\notag \\
&\quad \leqslant C(1 + \|\u\|^2) + \frac{C |b|^2 \lambda_0^2 \|\theta_0\|_{L^{\infty}}^2 }{2\underline{\nu} } \ell \|\nabla \mu\|^2.
\label{phi m and nabla u m 2}
\end{align}
Taking $\ell>0$ sufficiently small, we find
\begin{align} \label{weak estimate 1}
\frac{\mathrm{d}}{\mathrm{d} t}\left(\frac{1}{2} \| \nabla \phi \|^2 +
\int_{\Omega} W\left(\phi \right)\, \mathrm{d} x + \frac{\ell}{2} \| \u \|^2   \right) +
\frac{1}{4} \left\|\nabla \mu\right\|^2 +
\frac{\ell \underline{\nu}}{2} \|\nabla \u\|^2
\leqslant C(1 + \|\u\|^2 ).
\end{align}
Thanks to the fact that $W(\phi)$ is uniformly bounded from below, we can obtain estimates for $\u$, $\phi$, $\mu$ on $[0,T]$ (see \eqref{es1} below) from \eqref{weak estimate 1} and the classical Gronwall's lemma.
\medskip

\textbf{b). Estimates for $\|\theta\|_{H^1}$.}
The equation for the temperature is somewhat independent, and the estimates for $\theta$ has been well-investigated in \cite{HW2017,ZZF2013}. Briefly speaking, we take a transform $\Theta(x,t) = \int_0^{\theta(x,t)} \kappa(s) \,\mathrm{d}s$ to eliminate the temperature-dependent thermal coefficient, and derive estimates for the new variable $\Theta$. Then we recover estimates for $\theta$ from $\Theta$. Eventually, combining these estimates with (\ref{weak estimate 1}), we have
\begin{align} \label{weak estimate 2}
\|\nabla \theta(t)\|^2+\int_0^t\left(\left\|\theta_t(\tau)\right\|^2 +\|\theta(\tau)\|_{H^2}^2\right) \mathrm{d}\tau \leqslant C, \quad \forall\, t \in[0, T].
\end{align}

\subsubsection{Passage to the limit}
From (\ref{weak estimate 1}), (\ref{weak estimate 2}), we find  that the local approximate solution $(\u^m,\phi^m,\mu^m,\theta^m)$ can be extended to the interval $[0,T]$ for any given $T>0$. Moreover, (\ref{weak estimate 1}) implies
\begin{align}
\left\{\begin{array}{c}
	\nabla \phi^m \in L^{\infty}(0,T;\boldsymbol{L}^2), \\
	\nabla \u^m \in L^{2}(0,T;\boldsymbol{L}^2), \\
	\u^m \in L^{\infty}(0,T;\boldsymbol{L}^2),  \\
	\nabla \mu^m \in L^{2}(0,T;\boldsymbol{L}^2), \\
\end{array} \right. \text{are uniformly bounded with respect to $m$,}
\label{es1}
\end{align}
while the estimate (\ref{weak estimate 2}) yields
$$
\left\{\begin{array}{c}
	\nabla \theta^m \in L^{\infty}(0,T;\boldsymbol{L}^2), \\
	\theta^m_t \in L^{2}(0,T;L^2), \\
	\theta^m \in L^{2}(0,T;H^2),
\end{array} \right.  \text{are uniformly bounded with respect to $m$.}
$$
It follows from (\ref{Proof-phi1}) that $\| \Delta \phi^m\|^2 \leqslant -W^{\prime \prime} (\phi^m) \| \nabla \phi^m\|^2 + (\nabla \mu^m, \nabla \phi^m)$, as a consequence, $\|\phi^m\|_{H^2}^2 \leqslant C(1 + \| \nabla \mu^m\|)$. Therefore,  $\phi^m \in L^4(0,T;H^2)$ are uniformly bounded with respect to $m$. Besides, by a similar argument like in \cite{inequality} (see also \cite[Equation (3.13)]{Conti2020}), we find
\begin{align}
| \overline{\mu^m} | + \|W'(\phi^m)\|_{L^1}\leq C(1+\|\nabla \mu^m\|), \label{meanmum}
\end{align}
which together with the Poincar\'e-Wirtinger inequality yields that $\mu^m\in L^2(0,T;H^1(\Omega))$ is uniformly bounded with respect to $m$. Thus, we can apply Lemma \ref{mu and phi} and conclude that $\phi^m \in L^2(0,T;W^{2,p})$ is uniformly bounded with respect to $m$ for any $p\geqslant 2$. Moreover, by comparison, it is easy to check that $\u^m, \phi^m$ are uniformly bounded with respect to $m$ in $H^1\left(0, T ; \boldsymbol{V}_\sigma^{\prime}\right)$, $H^1\left(0, T ; (H^1)^{\prime}\right)$, respectively.

Hence, by a standard compactness argument (see e.g., \cite{compact}), we can pass to the limit as $m$ towards $ +\infty$ (up to a subsequence) in the semi-Galerkin scheme (\ref{semiu})-(\ref{semiboundary}) to obtain a global weak solution of problem (\ref{NSCHM})-(\ref{boundary}) on $[0,T]$. The details are omitted here. Uniqueness of the weak solution is a direct consequence of the continuous dependence estimate \eqref{continuousdependence}.

\subsubsection{Uniform-in-time estimates}
\noindent Assume in addition, $\theta_0 \in C^{\gamma}(\overline{\Omega})$ for some $\gamma \in (0,1)$. We proceed to derive uniform-in-time estimates of the global weak solution to problem (\ref{NSCHM})--(\ref{boundary}).
Similarly, we can check that the estimates \eqref{nabla phi m} and \eqref{u m} are also valid for the weak solution. Then, multiplying (\ref{nabla phi m}) by $a \lambda_0$ and adding it together with (\ref{u m}), we infer from (\ref{Kroneckeridentity}) that
\begin{align}
&\frac{\d}{\dt} \left(
\frac{1}{2} a \lambda_0 \| \nabla \phi\|^2 + a \lambda_0  \int_{\Omega} W(\phi)\, \dx + \frac{1}{2} \| \u\|^2 \right) + \left(  a \lambda_0 \| \nabla \mu\|^2 + \int_{\Omega} 2 \nu(\theta) |\D \u|^2 \, \dx \right)
\notag \\
&\quad= - b \lambda_0 (\theta \nabla \phi \otimes \nabla \phi, \nabla \u)
+  (\theta \g ,\u).
\label{new-es}
\end{align}
The terms on the right-hand side can be estimated by the  Gagliardo-Nirenberg inequality, the Poincar\'e inequality and the $H^2$-estimates for $\phi$. Recalling that $\|\phi\|_{L^\infty}\leq 1$ and $\|\theta\|_{L^{\infty}} \leqslant \|\theta_0\|_{L^{\infty}}$, we have 	
\begin{align}
	- b \lambda_0 (\theta \nabla \phi \otimes \nabla \phi, \nabla \u) &\leqslant
	|b| \lambda_0 \|\theta\|_{L^{\infty}} \| \nabla \phi\|_{\boldsymbol{L}^4}^2 \| \nabla \u\| \notag \\
	&\leqslant \varepsilon \| \nabla \u\|^2 + C\| \phi\|_{L^{\infty}}^2 \| \phi\|_{H^2}^2 \notag \\
	&\leqslant \varepsilon \| \nabla \u\|^2 + C (\| \phi \|^2 + \| \nabla \mu\|\| \nabla \phi\|) \notag \\
	&\leqslant \varepsilon \| \nabla \u\|^2 + \varepsilon \| \nabla \mu\|^2 + \widetilde{C} \| \nabla \phi\|^2 + C.
	\label{another}
\end{align}	
Here, $\widetilde{C}$ is a positive constant depending on $\varepsilon$. Furthermore, the H\"older's and Young's inequalities imply
\begin{align*}
	(\theta \g ,\u) \leqslant C\|\theta\|\| \u \| \leqslant \varepsilon \| \nabla \u\|^2 + C.
\end{align*}	
Hence, we infer from \eqref{new-es} and Korn's inequality \eqref{Korn} that
\begin{align}
&\frac{\d}{\dt} \left(
\frac{1}{2} a \lambda_0 \| \nabla \phi\|^2 + a \lambda_0  \int_{\Omega} W(\phi) \dx + \frac{1}{2} \| \u\|^2  \right)
+    (a \lambda_0 - \varepsilon) \| \nabla \mu\|^2 + (\underline{\nu} - 2\varepsilon) \|\nabla \u\|^2
 \notag \\
&\quad \leqslant C + \widetilde{C}\| \nabla \phi\|^2.
\label{mainenergy1}
\end{align}	
To control the last term on the right hand side, we test (\ref{CH2}) by $\phi - \overline{\phi}$, and find
\begin{align}
	\| \nabla \phi\|^2 + \int_{\Omega} W^{\prime}(\phi) (\phi - \overline{\phi})\, \dx &= \int_{\Omega} \mu (\phi - \overline{\phi})\, \dx \notag \\
	&= \int_{\Omega} (\mu - \overline{\mu})\phi\, \dx \notag \\
	&\leqslant C\| \nabla \mu\| \|\phi\| \notag \\
	&\leqslant \varepsilon \| \nabla \mu\|^2 + C.
\label{subordinate}
\end{align}
Multiplying (\ref{subordinate}) by $2\widetilde{C}$, adding together with (\ref{mainenergy1}), and taking $\varepsilon$ sufficiently small, we have
\begin{align}
	&\frac{\d}{\dt} \left(
	\frac{1}{2} a \lambda_0 \| \nabla \phi\|^2 + a \lambda_0  \int_{\Omega} W(\phi) \dx + \frac{1}{2} \| \u\|^2  \right) + \widetilde{C} \| \nabla \phi\|^2 +
	2\widetilde{C} \int_{\Omega} W^{\prime}(\phi) (\phi - \overline{\phi}) \,\dx \notag \\
	&\quad +   \frac{a \lambda_0}{2} \| \nabla \mu\|^2 + \dfrac{\underline{\nu}}{2} \|\nabla \u\|^2   \leqslant C. \label{mainenergy2}
\end{align}	
Set
$$
\mathcal{E}_1(t) = \dfrac{1}{2} a \lambda_0 \| \nabla \phi(t)\|^2 + a \lambda_0  \int_{\Omega} W(\phi(t)) \dx + \dfrac{1}{2} \| \u(t)\|^2.
$$
For the logarithmic potential $W$ (recalling \eqref{Wphi}), applying Taylor's expansion, we can check that (see e.g., \cite[Inequality (3.2)]{GiorginiOnoo})
$$
\int_{\Omega} W^{\prime}(\phi) (\phi - \overline{\phi}) \,\dx
\geqslant   \int_{\Omega} W(\phi) \, \dx -\int_{\Omega} W(\overline{\phi}) \, \dx - \frac{\alpha}{2} \int_\Omega (\phi - \overline{\phi})^2 \,\dx.
$$
Then by the Sobolev embedding theorem and \eqref{mainenergy2}, we can find some constants $c_1, c_1' > 0$ such that
\begin{align}
\frac{\mathrm{d}}{\mathrm{d}t} \mathcal{E}_1 + c_1 \mathcal{E}_1 + c_1'(\| \nabla \mu\|^2 + \|\nabla \u\|^2) \leqslant C, \notag
\end{align}
which together with Gronwall's lemma entails
\begin{align}
\| \u\|_{L^{\infty}(0, \infty; \L)} +
\sup\limits_{\tau \geqslant 0} \| \u \|_{L^2(\tau, \tau + 1; \V)} +
\| \phi \|_{L^{\infty}(0, \infty; H^1)} +
\sup\limits_{\tau \geqslant 0} \| \nabla \mu\|_{L^2(\tau, \tau + 1; L^2)}
\leqslant C.	\notag
\end{align}
Here, the constant $C$ not only depends on $\Omega$ and parameters of the system, but also depends on norms of the initial data.
By interpolation, we obtain
$$
\sup\limits_{\tau \geqslant 0} \| \u \|_{L^4( \tau, \tau + 1; \boldsymbol{L}^4)} \leqslant C.
$$
Moreover, using \eqref{meanmum}, we can deduce that
\begin{align}
\sup\limits_{\tau \geqslant 0} \| \mu\|_{L^2(\tau, \tau + 1; H^1)}
\leqslant C. \notag
\end{align}

Let us now consider the estimate of $\theta$. First, testing the equation (\ref{BOUSSINESQ}) by $\theta$, we have
\begin{align}
  \frac{1}{2}\frac{\d}{\dt}  \| \theta \|^2   + \int_{\Omega} \kappa(\theta) | \nabla \theta |^2\, \dx = 0.\notag
\end{align}
Thanks to the Poincar\'e inequality, it follows from the above inequality that
\begin{align} \label{thetalow}
\| \theta \|_{L^{\infty} (0 , \infty; L^2)} + \sup\limits_{\tau \geqslant 0} \| \theta\|_{L^2(\tau,\tau + 1; H^1)}
  \leqslant C.
\end{align}
Testing the equation (\ref{BOUSSINESQ}) by $ - \Delta \theta $, we have
\begin{align}
	\frac{1}{2} \frac{\d}{\dt}  \| \nabla \theta \|^2
	+ \int_{\Omega} \kappa(\theta) |\Delta \theta|^2\, \dx=
	\int_{\Omega} (\u \cdot \nabla \theta) \Delta \theta\, \dx- \int_{\Omega} \kappa^{\prime}(\theta) |\nabla \theta|^2 \Delta \theta \,\dx.
	\label{nablatheta}
\end{align}
Using the Gagliardo-Nirenberg inequality, the first term on the right hand side of \eqref{nablatheta} can be estimated as follows
\begin{align}
\int_{\Omega} (\u \cdot \nabla \theta) \Delta \theta \,\dx &=
- \int_{\Omega} \nabla \u : (\nabla \theta \otimes \nabla \theta)\, \dx \notag \\
&\leqslant \|\nabla \u\| \|\nabla \theta\|^2_{\boldsymbol{L}^4} \notag \\
&\leqslant C \|\nabla \u\| \|\theta\|_{L^{\infty}} \| \Delta \theta\| \notag \\
&\leqslant \varepsilon \|\Delta \theta\|^2 + C\| \nabla \u\|^2.
\end{align}
To control the second term, the following lemma will be useful, which can be proved as in \cite[Lemma 3.2]{ZZF2013} by using the known estimate for $\sup\limits_{\tau \geqslant 0} \| \u \|_{L^4\left(\tau, \tau + 1;\boldsymbol{L}^4\right)}$.
\medskip

\begin{lemma}[H\"{o}lder estimate for $\theta$]
Let $(\u,\phi,\mu,\theta)$ be a weak solution to   problem (\ref{NSCHM})--(\ref{boundary}) satisfying $\u_0 \in \L(\Omega)$, $\phi_0 \in H^1(\Omega)$ with  $\left\|\phi_0\right\|_{L^{\infty}} \leqslant 1$, $\left|\overline{\phi_0}\right|<1$, and $\theta_0 \in C^{\gamma}(\overline{\Omega}) \cap H_0^1(\Omega)$ for some $\gamma \in (0,1)$. Then we have
\begin{align}
\|\theta\|_{L^{\infty} (0,\infty; C^{\beta}(\overline{\Omega}))} \leqslant C, \label{thetaalpha}
\end{align}
for some $\beta \in(0, \gamma]$. The positive constant $C$ depends on $\| \theta_0\|_{C^{\gamma}(\overline{\Omega})}$, $\sup\limits_{\tau \geqslant 0} \| \u \|_{L^4\left(\tau, \tau + 1;\boldsymbol{L}^4\right)}$, $\Omega$ and parameters of the system. 
\end{lemma}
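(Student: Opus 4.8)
\emph{Strategy.} The plan is to treat the temperature equation \eqref{weaktheta} as a single linear, uniformly parabolic equation in divergence form with bounded measurable coefficients, and then to invoke the De Giorgi--Nash--Moser theory in the quantitative form of \cite[Lemma 3.2]{ZZF2013}. Using the incompressibility constraint \eqref{INCOMPRESS}, I would first recast it as
\[
\theta_t - \nabla \cdot \big( \kappa(\theta) \nabla \theta - \theta\, \u \big) = 0 \quad \text{in } \Omega \times (0,\infty), \qquad \theta|_{\partial\Omega} = 0,
\]
so that the diffusion coefficient $a(x,t) := \kappa(\theta(x,t))$ is merely measurable but satisfies $\underline{\kappa} \leqslant a \leqslant \overline{\kappa}$, while the whole first-order part is carried by the divergence-form field $\theta\,\u$. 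The maximum principle \eqref{semimaximum}, which passes to the weak solution, gives the bound $\|\theta(t)\|_{L^\infty} \leqslant \|\theta_0\|_{L^\infty}$; since $\theta$ is bounded, $\theta\,\u$ has the same space--time integrability as $\u$, so on each cylinder $\Omega \times (\tau,\tau+1)$ it belongs to $\boldsymbol{L}^4$ with norm controlled, \emph{uniformly in} $\tau$, by the already-established quantity $\sup_{\tau \geqslant 0} \|\u\|_{L^4(\tau,\tau+1;\boldsymbol{L}^4)}$.

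\emph{Local H\"older continuity.} The heart of the argument is a De Giorgi truncation estimate on parabolic cylinders. Testing the equation with cut-off truncations $\zeta^2 (\theta-k)_\pm$, where $\zeta$ is a space--time cut-off, the principal part of the convective term cancels owing to $\nabla \cdot \u = 0$: for any Lipschitz $G$ one has $\int_\Omega (\u \cdot \nabla \theta)\, G(\theta)\,\dx = -\int_\Omega (\nabla \cdot \u)\, \mathcal{G}(\theta)\,\dx = 0$ with $\mathcal{G}' = G$, and the only surviving contribution is the cut-off remainder $-\int_\Omega (\theta-k)_\pm^2\, \zeta\, \u \cdot \nabla \zeta \,\dx$. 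In two dimensions the drift $\u \in \boldsymbol{L}^4(\boldsymbol{L}^4)$ sits exactly at the scaling-critical threshold $\tfrac{2}{q} + \tfrac{n}{p} = 1$, so it cannot be treated as a naive lower-order perturbation; however, because the divergence-free structure has already removed the dangerous principal term, the remainder is genuinely lower order and is absorbed by the iteration using the boundedness of $\theta$ together with the small-measure factor provided by $\nabla \zeta$. This yields local boundedness and an oscillation-decay estimate, hence interior H\"older continuity. Since $\partial\Omega$ is smooth and $\theta|_{\partial\Omega} = 0$, the same truncation scheme runs up to the boundary, giving a bound for the space--time H\"older seminorm of $\theta$ on $\Omega \times (\tau+\tfrac12, \tau+1)$ that depends only on $\|\theta_0\|_{L^\infty}$, $\underline{\kappa}$, $\overline{\kappa}$, $\Omega$, and $\|\u\|_{L^4(\tau,\tau+1;\boldsymbol{L}^4)}$, with some exponent $\beta_0 \in (0,1)$.

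\emph{Globalization in time and conclusion.} It remains to upgrade these local statements to the uniform bound \eqref{thetaalpha}. Because the constant in the cylinder estimate depends on $\u$ only through $\sup_{\tau \geqslant 0}\|\u\|_{L^4(\tau,\tau+1;\boldsymbol{L}^4)}$, which is finite, covering $[1,\infty)$ by the cylinders $\Omega \times (\tau+\tfrac12, \tau+1)$ gives $\sup_{t \geqslant 1}\|\theta(t)\|_{C^{\beta_0}(\overline{\Omega})} \leqslant C$. On the initial slice $[0,1]$ the interior-in-time estimate degenerates as $t \downarrow 0$, so there I would instead use a boundary H\"older estimate incorporating the regularity of the datum: the hypothesis $\theta_0 \in C^\gamma(\overline{\Omega})$, compatible with the Dirichlet condition since $\theta_0|_{\partial\Omega} = 0$, propagates up to $t = 0$ and yields $\theta \in L^\infty(0,1;C^{\gamma}(\overline{\Omega}))$ after possibly lowering the exponent. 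Setting $\beta = \min\{\beta_0, \gamma\} \in (0,\gamma]$ and combining the two regimes gives \eqref{thetaalpha}. The main obstacle is precisely the borderline integrability of the velocity in two dimensions: $\boldsymbol{L}^4(\boldsymbol{L}^4)$ is scaling-critical for the convective term, and it is only the divergence-free constraint \eqref{INCOMPRESS} --- which annihilates the leading part of the drift in every truncated energy inequality --- that makes the De Giorgi scheme close at this threshold and thereby allows the argument to proceed without any smallness assumption on $\theta_0$.
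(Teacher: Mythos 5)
Your proposal is correct and follows essentially the same route as the paper: the paper proves this lemma simply by invoking \cite[Lemma 3.2]{ZZF2013} together with the uniform bound on $\sup_{\tau\geqslant 0}\|\u\|_{L^4(\tau,\tau+1;\boldsymbol{L}^4)}$, and the content of that cited lemma is exactly the De Giorgi--Nash--Moser/LSU H\"older estimate for a divergence-form parabolic equation with bounded measurable diffusion coefficient and a divergence-free drift in the critical class $L^4L^4$, applied uniformly over unit time cylinders. Your reconstruction (maximum principle, rewriting the convection in divergence form via \eqref{INCOMPRESS}, the critical-drift De Giorgi iteration up to the boundary, and the covering in $\tau$ plus the $C^{\gamma}$ initial slice) is precisely what underlies that citation.
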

\medskip

Using \eqref{thetaalpha} and the interpolation (\ref{GiorginiHolder}) (with $\gamma$ replacing by $\beta$), we find some $\xi \in (\frac{1}{2} , 1)$ such that the following crucial estimate holds
\begin{align}
- \int_{\Omega} \kappa^{\prime}(\theta) |\nabla \theta|^2 \Delta \theta\, \dx &\leqslant
\|\kappa^{\prime}(\theta)\|_{L^{\infty}} \|\nabla \theta\|^2_{\boldsymbol{L}^4} \| \Delta \theta\| \notag \\
&\leqslant C \|\theta\|_{C^{\beta}}^{2 \xi} \| \theta\|_{H^2}^{2-2\xi}\| \Delta \theta\| \notag \\
&\leqslant \varepsilon \| \Delta \theta\|^2 + C.
\end{align}	
In the above estimate, we also use the Poincar\'e inequality and the elliptic estimate for $\theta$ that satisfies the homogeneous Dirichlet boundary condition. Therefore, letting $\varepsilon$ be sufficiently small, we obtain
\begin{align}
\frac{1}{2} \frac{\d}{\dt} \|  \nabla \theta \|^2
+ \frac{\underline{\kappa}}{2} \int_{\Omega} |\Delta \theta|^2\, \dx
\leqslant C + C \| \nabla \u\|^2.
\end{align}	
In view of the estimates $\sup\limits_{\tau \geqslant 0} \| \u \|_{L^2(\tau, \tau + 1; \V)} \leqslant C$, $\sup\limits_{\tau \geqslant 0} \| \theta\|_{L^2(\tau,\tau + 1; H^1)} \leqslant C$, we can apply the uniform Gronwall's lemma and deduce that
\begin{align}
\| \theta \|_{L^{\infty} (0 , \infty; H^1)} + \sup\limits_{\tau \geqslant 0} \| \theta\|_{L^2(\tau,\tau + 1; H^2)}    \leqslant C.
\end{align}
The proof of Theorem \ref{2dweak-exist} is complete.
\end{proof}

\subsection{Continuous dependence and uniqueness} \label{Global Weak Solutions-Uniqueness}
To establish the continuous dependence estimate \eqref{continuousdependence}, we mention that the spatial regularity of $\phi$ from the fourth Cahn-Hilliard equation (comparing with the second order Allen-Cahn equation) brings convenience for the treatment of some nonlinear terms. Also, the estimate (\ref{pressure}) for the $L^4$-norm of the pressure in the Stokes problem plays an important role when we handle the term $I_8$ below. We also recall (\ref{Pre-A0}) and (\ref{Pre-StokesS}) for the definition of operators $A_0$ and $S$ that will be used in the subsequent proof.
\medskip
\begin{proof}(\textbf{Proof of Theorem \ref{2dweak-unique}.})
	Let $(\u_1,\phi_1,\mu_1,\theta_1)$, $(\u_2,\phi_2,\mu_2,\theta_2)$ be two weak solutions to problem (\ref{NSCHM})-(\ref{boundary}) with initial conditions $(\u_{01},\phi_{01},\theta_{01})$, $(\u_{02},\phi_{02},\theta_{02})$, respectively. Write
	\begin{align*}
		(\u,\phi,\mu,\theta) &= (\u_{1} - \u_{2},\phi_{1} - \phi_{2},\mu_{1} - \mu_{2},\theta_{1} - \theta_{2}), \\
		(\u_0,\phi_0,\theta_0)    &= (\u_{01} - \u_{02},\phi_{01} - \phi_{02},\theta_{01} - \theta_{02}).
	\end{align*}
 Clearly, we have the following identities for all $\xi \in H^1(\Omega)$, $\v \in \V$:
	\begin{align}
		&\left\langle \u_t, \v\right\rangle_{\V} +
		\int_{\Omega}\left(\u_1 \cdot \nabla \u  + \u \cdot \nabla \u_2\right) \cdot \v \,\mathrm{d} x
		+\int_{\Omega} 2 \nu\left(\theta_1\right) \D \u: \nabla \v \, \mathrm{d} x\nonumber\\
&\qquad 		+\int_{\Omega} 2 \left[\nu\left(\theta_1\right)-\nu\left(\theta_2\right)\right] \D \u_2:\nabla \v\,  \mathrm{d} x
		\nonumber \\
		&\quad =\int_{\Omega} \left( \lambda\left(\theta_1\right) \nabla \phi_1 \otimes \nabla \phi_1
		-\lambda\left(\theta_2\right) \nabla \phi_2 \otimes \nabla \phi_2 \right): \nabla \v \,\mathrm{d} x
		+\int_{\Omega} \theta \g \cdot \v \,\mathrm{d} x,
		\label{u uniqueness} \\
		&\left\langle\phi_t, \xi\right\rangle_{H^1}+\left(\u_1 \cdot \nabla \phi, \xi\right)+\left(\u \cdot \nabla \phi_2, \xi\right) +(\nabla \mu, \nabla \xi) = 0,
		\label{phi uniqueness} \\
		&\theta_t+ \u_1 \cdot \nabla \theta + \u \cdot \nabla \theta_2 =
		\nabla \cdot \left( \kappa\left(\theta_1\right) \nabla \theta\right)
		+\nabla \cdot \left(\left(\kappa\left(\theta_1\right)-\kappa\left(\theta_2\right)\right) \nabla \theta_2\right).
		\label{theta uniqueness}
	\end{align}
In what follows, $\varepsilon$ represents a small positive constant, while $C$ represent constants that may depend on $\varepsilon$, $\Omega$ and parameters of the system. \medskip
	
  \textbf{Estimates for $\|\phi - \overline{\phi}\|_{V_0^{\prime}}$.} Testing (\ref{phi uniqueness}) by $A_0^{-1} (\phi - \overline{\phi})$, we get
	\begin{align} \label{I1toI2}
		\frac{1}{2} \frac{\mathrm{d}}{\mathrm{d} t}\|\phi - \overline{\phi}\|_{V_0^{\prime}}^2+\left(\mu, \phi - \overline{\phi} \right) = I_1 + I_2,
	\end{align}
	where
	\begin{align} \label{I1I2}
		I_1=\left(\phi \u_1, \nabla A_0^{-1} ( \phi - \overline{\phi} )  \right), \quad
		I_2=\left(\phi_2 \u, \nabla A_0^{-1} ( \phi - \overline{\phi} )  \right).
	\end{align}
In light of the conservation of mass $\overline{\phi} = \overline{\phi_0}$, we infer that
	\begin{align}
		(\mu,\phi - \overline{\phi}) &=(-\Delta \phi+W^{\prime}(\phi_1)-W^{\prime}(\phi_2), \phi - \overline{\phi}) \notag \\
		&=\|\nabla \phi\|^2 +
		(W^{\prime}(\phi_1)-W^{\prime}(\phi_2), \phi - \overline{\phi})
		\notag \\
		& \geqslant\|\nabla \phi\|^2 -
		\alpha | (\phi , \phi - \overline{\phi})|
		\notag \\
		&= \|\nabla \phi\|^2 -
		\alpha | (\phi - \overline{\phi} , \phi - \overline{\phi})|
		\notag \\
		&=\| \nabla \phi \|^2 - \alpha |\left(\nabla A_0^{-1} (\phi - \overline{\phi}),
		\nabla (\phi - \overline{\phi})  \right)|
		\notag \\
		& \geqslant \|\nabla \phi\|^2 -
		\left(\frac{1}{2} \|\nabla (\phi - \overline{\phi})\|^2 +
		\frac{\alpha^2}{2} \| \phi - \overline{\phi} \|_{V_0^{\prime}}^2\right)
		\notag \\
		&=\frac{1}{2}\|\nabla \phi\|^2-\frac{\alpha^2}{2}\| \phi - \overline{\phi} \|_{V_0^{\prime}}^2.
		\label{muphi-phi}
	\end{align}
	Combining (\ref{I1toI2}), (\ref{muphi-phi}), we find
	\begin{align}\label{phi-phi}
		\frac{1}{2}\frac{\mathrm{d}}{\mathrm{d} t} \|\phi - \overline{\phi}\|_{V_0^{\prime}}^2 + \frac{1}{2} \| \nabla \phi \|^2  \leqslant
		\frac{\alpha^2}{2} \| \phi - \overline{\phi} \|_{V_0^{\prime}}^2 +I_1+I_2.
	\end{align}
	
 \textbf{Estimates for $\|\theta\|$.}
  Multiplying (\ref{theta uniqueness}) by $\theta$ and integrating over $\Omega$, we get
	\begin{align} \label{theta}
		\frac{1}{2} \frac{\mathrm{d}}{\mathrm{d} t} \| \theta \|^2 +
		\int_{\Omega} \kappa\left(\theta_1\right)|\nabla \theta|^2 \, \mathrm{d} x = -(\u \cdot \nabla \theta_2,\theta) - \int_{\Omega}\left(\kappa\left(\theta_1\right)-\kappa\left(\theta_2\right)\right) \nabla \theta_2 \cdot \nabla \theta \,\mathrm{d} x.
	\end{align}
	Applying the Gagliardo-Nirenberg inequality, we see that
	\begin{align} \label{thetaestimate1}
		-(\u \cdot \nabla \theta_2, \theta) &\leqslant \| \u \| \| \nabla \theta_2 \|_{\boldsymbol{L}^4} \| \theta\|_{L^4} \notag \\
		&\leqslant C \| \u \| \| \theta_2 \|_{L^{\infty}}^{\frac{1}{2}} \| \theta_2 \|_{H^2}^{\frac{1}{2}} \| \theta\|^{\frac{1}{2}} \|\nabla \theta\|^{\frac{1}{2}} \notag \\
		&\leqslant \varepsilon \| \u \|^2 + \varepsilon \| \nabla \theta\|^2 + C \| \theta_2 \|_{H^2}^2 \| \theta \|^2.
	\end{align}
	and
	\begin{align}
		-\int_{\Omega}\left(\kappa\left(\theta_1\right)
		- \kappa\left(\theta_2\right)\right) \nabla \theta_2 \cdot \nabla \theta \,\mathrm{d} x &\leqslant
		\|\kappa^{\prime}\|_{L^{\infty}}  \| \theta\|_{L^4} \|\nabla \theta\| \| \nabla \theta_2 \|_{\boldsymbol{L}^4}\notag \\
		&\leqslant C  \| \theta\|^{\frac{1}{2}}\|\nabla \theta\|^{\frac{3}{2}} \| \theta_2 \|_{L^{\infty}}^{\frac{1}{2}} \| \theta_2 \|_{H^2}^{\frac{1}{2}}
		 \notag \\
		&\leqslant \varepsilon \| \nabla \theta \|^2 + C \| \theta_2 \|_{H^2}^2 \| \theta \|^2.
		\label{thetaestimate2}
	\end{align}
Hence, from  (\ref{theta}), (\ref{thetaestimate1}) and (\ref{thetaestimate2}), we find
	\begin{align} \label{thetaestimate}
		\frac{1}{2} \frac{\mathrm{d}}{\mathrm{d} t}\|\theta\|^2 + \int_{\Omega} \kappa\left(\theta_1\right)|\nabla \theta|^2 \mathrm{d} x \leqslant \varepsilon \|\u\|^2 + 2\varepsilon \| \nabla \theta \|^2 + C \| \theta_2 \|_{H^2}^2 \| \theta \|^2.
	\end{align}
	
 \textbf{Estimates for $\|\u\|_{\V^{\prime}}$.}
 Testing (\ref{u uniqueness}) by $S^{-1} \u$ yields
	\begin{align}
		\frac{1}{2} \frac{\mathrm{d}}{\mathrm{d} t} &\|\u\|_{\V^{\prime}}^2 +
		\int_{\Omega} 2 \nu\left(\theta_1\right) \D \u: \nabla S^{-1} \u \, \mathrm{d} x
		\notag \\
		=&-\int_{\Omega}   2 \left(\nu(\theta_1)  -
		\nu(\theta_2)\right) \D \u_2: \nabla S^{-1} \u \,\mathrm{d} x
		-\left[\left(\u_1 \otimes \u, \nabla S^{-1} \u\right) +
		\left(\u \otimes \u_2, \nabla S^{-1} \u\right)\right]
		\notag \\
		&+ \int_{\Omega}\left[\lambda\left(\theta_1\right) (\nabla \phi_1 \otimes \nabla \phi_1)-\lambda\left(\theta_2\right) (\nabla \phi_2 \otimes \nabla \phi_2) \right] :
		\nabla S^{-1} \u \,\mathrm{d} x
		+ \int_{\Omega} \theta \g \cdot \nabla S^{-1} \u  \,\mathrm{d} x
		\notag \\
		:=& I_3+I_4+I_5+I_6.
		\label{I3toI6}
	\end{align}
Besides, recalling that (see e.g., \cite{Giorgini2019})
$$\nabla \cdot (\nabla^t (S^{-1} \u)) = \nabla(\nabla \cdot (S^{-1} \u)) = 0. $$ 
Integrating by parts, and using the relation $-\Delta S^{-1} \u + \nabla P = \u$, we have
	\begin{align}
		&\int_{\Omega} 2 \nu\left(\theta_1\right) \D \u : \nabla S^{-1} \u \,\mathrm{d} x
		\notag \\
		&\quad = \int_{\Omega} 2 \nabla \u : \nu(\theta_1) \D (S^{-1} \u) \dx
		\notag \\
		&\quad = -2 \left(\u , \nabla \cdot(\nu\left(\theta_1\right) \D (S^{-1} \u) )\right)
		\notag \\
		&\quad =-2 \left(\u , \nu^{\prime}\left(\theta_1\right) \nabla \theta_1 \cdot \D (S^{-1} \u) \right) -
		\left(\u ,\nu(\theta_1) \Delta S^{-1} \u\right) 
		\notag \\
		&\quad = -2 \left(\u , \nu^{\prime}\left(\theta_1\right) \nabla \theta_1 \cdot \D (S^{-1} \u) \right) +
		\int_{\Omega} \nu\left(\theta_1\right)|\u|^2 \,\mathrm{d} x
		-\left(\u, \nu\left(\theta_1\right) \nabla P\right)
		\notag \\
		&\quad = -2 \left(\u , \nu^{\prime}\left(\theta_1\right) \nabla \theta_1 \cdot \D (S^{-1} \u) \right) +
		\int_{\Omega} \nu\left(\theta_1\right)|\u|^2 \, \mathrm{d} x
		+\left(\nu^{\prime}\left(\theta_1\right) \nabla \theta_1 \cdot \u, P\right)
		\notag \\
		&\quad :=-I_7 + \int_{\Omega} \nu(\theta_1)|\u|^2 \mathrm{d} x -I_8.
		\label{I7toI8}
	\end{align}
	
 Therefore, combining (\ref{phi-phi}), (\ref{thetaestimate}), (\ref{I3toI6}), (\ref{I7toI8}), and taking $\varepsilon$ sufficiently small, we can deduce that
	\begin{align}
		\frac{\mathrm{d}}{\mathrm{d} t}& \left( \frac{1}{2} \| \phi(t) - \overline{\phi}\|^2_{V_0^{\prime}}  +  \frac{1}{2} \| \u(t) \|^2_{\V^{\prime}} + \frac{1}{2}\|\theta(t)\|^2         \right) +  \dfrac{\underline{\nu}}{2} \| \u \|^2 + \frac{1}{2} \| \nabla \phi\|^2 + \dfrac{\underline{\kappa}}{2} \|\nabla \theta\|^2
		\nonumber \\
		&\quad \leqslant \frac{\alpha^2}{2}\| \phi - \overline{\phi} \|_{V_0^{\prime}}^2 + C \| \theta_2 \|_{H^2}^2 \| \theta \|^2
		+ \sum_{j=1}^8 I_j.
		\label{All Uniqueness}
	\end{align}
Recalling the conservation of mass, i.e., $\overline{\phi} \equiv \overline{\phi_{0}}$, we infer from the Poincar\'{e}-Wirtinger inequality that
	\begin{align}
		I_1 &= (\phi \u_1 , \nabla A_0^{-1} ( \phi - \overline{\phi} ) )
		\notag \\
		&= ((\phi - \overline{\phi}) \u_1 , \nabla A_0^{-1} ( \phi - \overline{\phi} )) \notag \\
		& \leqslant \|\phi - \overline{\phi} \|_{L^6}\left\|\u_1\right\|_{\boldsymbol{L}^3}\|\phi - \overline{\phi}\|_{V_0^{\prime}}
		\notag \\
		& \leqslant C \|\nabla \phi \|
		\left\|\u_1\right\|_{\boldsymbol{L}^3}\|\phi - \overline{\phi} \|_{V_0^{\prime}}
		\notag \\
		& \leqslant \varepsilon
		\|\nabla \phi \|^2
		+C\left\|\u_{1}\right\|_{\boldsymbol{L}^3}^2\|\phi - \overline{\phi} \|_{V_0^{\prime}}^2 ,
		\label{I1}
	\end{align} 
	and
	\begin{align}
		I_2 &=\left( \phi_2 \u, \nabla A_0^{-1} (\phi-\overline{\phi})  \right)
		\notag \\
		& \leqslant\left\|\phi_2\right\|_{L^{\infty}} \|\u\| \|\phi-\overline{\phi} \|_{V_0^{\prime}}
		\notag \\
		& \leqslant \varepsilon\|\u\|^2+C\|\phi-\overline{\phi} \|_{V_0^{\prime}}^2.
		\label{I2}
	\end{align}
Applying the Gagliardo-Nirenberg inequality and Young's inequality to $I_3$, we find
	\begin{align}
I_3 &= -\int_{\Omega} 2 \left(\nu( \theta_1)-\nu\left(\theta_2\right)\right) \D \u_2: \nabla S^{-1} \u \,\mathrm{d} x
\notag \\
&\leqslant C \| \theta\|_{L^4} \| \nabla \u_2 \| \| \nabla S^{-1} \u\|_{\boldsymbol{L}^4} \notag \\
&\leqslant C \| \theta\|^{\frac{1}{2}} \| \nabla \theta \|^{\frac{1}{2}} \| \nabla \u_2 \| \| \nabla S^{-1} \u \|^{\frac{1}{2}}
\| \u \|^{\frac{1}{2}} \notag \\
&\leqslant \varepsilon \| \u \|^2 + C \| \theta\|^{\frac{2}{3}} \| \nabla \theta \|^{\frac{2}{3}} \| \nabla \u_2 \|^{\frac{4}{3}}
\| \nabla S^{-1} \u \|^{\frac{2}{3}} \notag \\
&\leqslant \varepsilon \| \u \|^2 + \varepsilon \| \nabla \theta \|^2 + C \| \theta \| \| \nabla \u_2\|^2 \| \nabla S^{-1} \u \| \notag \\
&\leqslant \varepsilon \| \u \|^2 + \varepsilon \| \nabla \theta \|^2 + C \| \nabla \u_2\|^2 ( \| \theta\|^2 + \| \nabla S^{-1} \u \|^2 ).
\label{I3}
	\end{align}
Similarly, we get
	\begin{align}
		I_4 &= -\left[\left(\u_1 \otimes \u, \nabla S^{-1} \u\right) +
		\left(\u \otimes \u_2, \nabla S^{-1} \u\right)\right]
		\notag \\
		&\leqslant \left(\left\|\u_1\right\|_{\boldsymbol{L}^4} +
		\left\|\u_2\right\|_{\boldsymbol{L}^4}\right) \|\u\| \| \nabla S^{-1} \u \|_{\boldsymbol{L}^4}
		\notag \\
		&\leqslant C\left(\|\u_1\|^{\frac{1}{2}}
		\| \nabla \u_1 \|^{\frac{1}{2}} +
		\|\u_2\|^{\frac{1}{2}}
		\| \nabla \u_2 \|^{\frac{1}{2}} \right)
		\|\u\| \| \nabla S^{-1} \u\|^{\frac{1}{2}} \|\u\|^{\frac{1}{2}}
		\notag \\
		&\leqslant \varepsilon\|\u\|^2+C\left(\left\|\u_1\right\|_{\boldsymbol{H}^{1}}^2+\left\|\u_2\right\|_{\boldsymbol{H}^{1}}^2\right) \| \u \|^2_{\V^{\prime}}.
		\label{I4}
	\end{align}
	The term $I_5$ can be estimated in a standard manner such that
	\begin{align}
		I_5 &= \int_{\Omega}\left(\lambda (\theta_1) \nabla \phi_1 \otimes \nabla \phi_1 - \lambda(\theta_2) \nabla \phi_2 \otimes \nabla \phi_2\right) : \nabla S^{-1} \u \,\mathrm{d} x
		\notag \\
		&=\int_{\Omega} \lambda(\theta_2) \left( \nabla \phi_1 \otimes \nabla \phi_1 -
		\nabla \phi_2 \otimes \nabla \phi_2 \right) : \nabla S^{-1} \u \, \mathrm{d} x
		\notag \\
		&\quad  +\int_{\Omega}\left(\lambda\left(\theta_1\right)-\lambda(\theta_2)\right) \nabla \phi_1 \otimes \nabla \phi_1: \nabla S^{-1} \u \,\mathrm{d} x
		\notag \\
		&=\int_{\Omega} \lambda(\theta_2) \left( \nabla \phi_1 \otimes \nabla \phi +
		\nabla \phi \otimes \nabla \phi_2 \right) : \nabla S^{-1} \u \, \mathrm{d} x
		\notag \\
		&\quad  +\int_{\Omega}\left(\lambda\left(\theta_1\right)-\lambda(\theta_2)\right) \nabla \phi_1 \otimes \nabla \phi_1: \nabla S^{-1} \u \,\mathrm{d} x
		\notag \\
		&\leqslant \left\|\lambda\left(\theta_2\right)\right\|_{L^{\infty}}\left(\left\|\nabla \phi_1\right\|_{\boldsymbol{L}^{\infty}} + \left\|\nabla \phi_2\right\|_{\boldsymbol{L}^{\infty}}\right)\|\nabla \phi\| \|\u\|_{\V^{\prime}}
		\notag \\
		&\quad  + \| \lambda^{\prime}\|_{L^{\infty}} \| \theta \|_{L^6} \| \nabla \phi_1\|_{\boldsymbol{L}^6}^2 \|\u\|_{\V^{\prime}}
		\notag \\
		&\leqslant \varepsilon \| \nabla \phi\|^2 +
		\varepsilon \|\nabla \theta\|^2 + C(
		\|\nabla \phi_1\|^2_{\boldsymbol{L}^{\infty}} +  \|\nabla \phi_2\|^2_{\boldsymbol{L}^{\infty}} +
		\|\nabla \phi_1\|^4_{\boldsymbol{L}^6}  ) \|\u\|_{\V^{\prime}}^2
		\notag \\
		&\leqslant \varepsilon \| \nabla \phi\|^2
        +    \varepsilon \|\nabla \theta\|^2 + C(
		\|\nabla \phi_1\|^2_{\boldsymbol{L}^{\infty}} +  \|\nabla \phi_2\|^2_{\boldsymbol{L}^{\infty}} +
		\|\phi_1\|^4_{H^2}  ) \|\u\|_{\V^{\prime}}^2.
		\label{I5}
	\end{align}	
Moreover, we find
	\begin{align}
		I_6 &= \int_{\Omega} \theta \g \cdot \nabla S^{-1} \u\, \mathrm{d} x
		\leqslant C\|\theta\|^2 + \|\u\|_{\V^{\prime}}^2.
		\label{I6}
	\end{align}
Thanks to the Gagliardo-Nirenberg inequality, estimate \eqref{Pre-Stokesestimate} and Korn's inequality \eqref{Korn}, we have
	\begin{align}
		I_7 &= 2 \left( \u , \nu^{\prime}\left(\theta_1\right) \nabla \theta_1 \cdot \D S^{-1} \u\right)
		\notag \\
		&\leqslant C\|\u\|\left\|\D S^{-1} \u \right\|_{\boldsymbol{L}^4}      \left\|\nabla \theta_1\right\|_{\boldsymbol{L}^{4}}
		\notag \\
		&\leqslant C\|\u\|    \left\|\nabla S^{-1} \u \right\|^{\frac{1}{2}}
		\left\| \u \right\|^{\frac{1}{2}}
		\left\|\nabla \theta_1\right\|^{\frac{1}{2}}
		\left\|\theta_1\right\|_{H^2}^{\frac{1}{2}}
		\notag \\
		&\leqslant \varepsilon\|\u\|^2+C\|\theta_1\|_{H^2}^2 \|\u\|_{\V^{\prime}}^2.
		\label{I7}
	\end{align}
Finally, to handle the term $I_8$, we apply (\ref{pressure}) for the pressure and obtain
	\begin{align} \label{I8}
		I_8 &= -(\nu^{\prime} (\theta_1) \nabla \theta_1 \cdot \u, P) \notag \\
		&\leqslant \| \nu^{\prime} (\theta_1)\|_{L^{\infty}} \| \nabla \theta_1\|_{\boldsymbol{L}^4} \| \u \| \| P \|_{L^4} \notag \\
		&\leqslant C \| \theta_1\|_{L^{\infty}}^{\frac{1}{2}} \| \theta \|_{H^2}^{\frac{1}{2}} \| \u \| \| \u \|^{\frac{1}{2}} \| \nabla S^{-1} \u \|^{\frac{1}{2}} \notag \\
		&\leqslant \varepsilon \| \u \|^2 + C\| \theta_1\|_{H^2}^2 \| \nabla S^{-1} \u\|^2.
	\end{align}
Collecting the estimates  (\ref{I1})-(\ref{I8}), we can deduce from (\ref{All Uniqueness}) that
	\begin{align} \label{UniquenessEnergy1}
		\frac{\mathrm{d}}{\mathrm{d} t} \Lambda(t) + \mathcal{A}(t) \leqslant \mathcal{J}(t) \Lambda(t),
	\end{align}
	where
	\begin{align}
		\Lambda(t) &= \|\u\|_{\V^{\prime}}^2 + \|\phi - \overline{\phi}\|_{V_0^{\prime}}^2 + \|\theta\|^2,
		\notag \\
		\mathcal{A}(t) &= C(\|\u\|^2 + \| \nabla \phi\|^2 + \| \nabla \theta\|^2),
		\notag
	\end{align}
and
	\begin{align}
		\mathcal{J}(t) & = C (
		1 + \|\u_1\|_{\V}^2 + \|\u_2\|_{\V}^2 + \|\phi_1\|_{W^{2,3}}^2 +
		\|\phi_2\|_{W^{2,3}}^2 + \|\phi_1\|_{H^2}^4 \nonumber\\
 &\quad + \| \theta_1 \|_{H^2}^2 + \| \theta_2 \|_{H^2}^2). \notag
	\end{align}
	In view of Theorem \ref{2dweak-exist}, it is straightforward to check that $\mathcal{J} \in L^1(0,T)$. Then the desired conclusion \eqref{continuousdependence} follows from \eqref{UniquenessEnergy1} and Gronwall's lemma.
\end{proof}

\section{Global Strong Solutions} \label{Global Strong Solutions}
\begin{proof} (\textbf{Proof of Theorem \ref{2dstrong}.})
We still apply the semi-Galerkin scheme (\ref{semiu})-(\ref{semiboundary}) for the existence of global strong solutions. Different from the usual Faedo-Galerkin method, in which approximate solutions are automatically smooth, regularity of solutions to the semi-Galerkin scheme is limited. In the analysis for weak solutions, $\phi^m$, $\mu^m$, $\theta^m$ are not doomed to have enough smoothness. Nevertheless, thanks to the previous results, for instance, \cite{Giorgini Heleshaw,Giorgini2019,HeStrong} for the analysis of $\phi^m$, $\mu^m$, and \cite{ZZF2013,HW2017} for the treatment of $\theta^m$, we can obtain a local approximate solution $(\u^m,\phi^m,\mu^m,\theta^m)$ defined on some interval $[0,T_m]$ with sufficiently high regularity for $\phi^m$, $\mu^m$, $\theta^m$. Further details can be found in Appendix \ref{semigalerkin_strongsolution}.

The proof of Theorem \ref{2dstrong} consists of several steps.

\subsection{Construction of approximate solutions}
Let $T>0$. Consider the approximate velocity field $\u^m(x, t)=\sum\limits_{i=1}^m g_i^m(t) \w_i(x)$ and the semi-Galerkin scheme (\ref{semiu})-(\ref{semiboundary}). Then we have the following result:
\medskip

\begin{thm} \label{strongsemi}
Let $\u_0 \in \V(\Omega)$, $\phi_0 \in H^{2}(\Omega)$  with $\partial_\mathbf{n}\phi_0=0$ on $\partial\Omega$, $\left\|\phi_0\right\|_{L^{\infty}} \leqslant 1$,
$\left|\overline{\phi_0}\right|<1$, $\mu_0 \in H^1(\Omega)$ and $\theta_0 \in  H^2( \Omega) \cap H_0^{1}(\Omega)$. For any positive integer $m$, problem (\ref{semiu})-(\ref{semiboundary}) admits a unique local solution $(\u^m, \phi^m, \mu^m, \theta^m)$ on some interval $[0,T_m]$ with the following regularity properties:
\begin{align*}
\u^m &\in C\left([0, T_m] ; \H_m\right), \\
\phi^m & \in L^{\infty}(0,T_m;H^3) \cap L^2(0,T_m;H^4) \cap H^1(0,T_m;H^1), \\
\mu^m &\in L^{\infty}(0,T_m;H^1) \cap L^2(0,T_m;H^3)\cap H^1(0,T_m;(H^1)^{\prime}),\\
\theta^m &\in L^{\infty}(0,T_m;H^2) \cap L^2(0,T_m;H^3) \cap H^1(0,T_m;H^1).	
\end{align*}
Moreover, there exists $\delta_m \in (0,1)$, such that $\|\phi^m(t)\|_{L^{\infty}} \leqslant 1 - \delta_m$ for all $t\in [0,T_m]$.
\end{thm}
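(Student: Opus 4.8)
The plan is to build the local approximate solution by a fixed-point argument that exploits the finite dimensionality of the velocity approximation to decouple $\u^m$ from the Cahn--Hilliard and temperature subsystems. Writing $\u^{m}(x,t)=\sum_{i=1}^{m} g_i(t)\,\w_i(x)$ and identifying such a field with its coefficient vector, the key structural remark is that all norms on the finite-dimensional space $\H_m$ are equivalent; hence a bound on the $L^2$-norm of $\u^{m}$ controls $\u^{m}$ in every $C^{k}(\overline{\Omega})$ norm, which makes the transport terms $\u^{m}\cdot\nabla\phi^{m}$ and $\u^{m}\cdot\nabla\theta^{m}$ harmless. Fix $R>0$ and $T\in(0,T_0]$ to be determined, and on the closed ball $B_R=\{\u^{*}\in C([0,T];\H_m):\ \|\u^{*}\|_{C([0,T];\L)}\le R\}$ I would define a solution map $\mathcal{T}$ through the three steps below.

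First, given $\u^{*}\in B_R$, solve the convective Cahn--Hilliard subsystem (\ref{semiphi})--(\ref{semimu}) with advecting field $\u^{*}$, homogeneous Neumann conditions, and initial data $\phi_0\in H^2$, $\mu_0\in H^1$. By the well-posedness and regularity theory for the Cahn--Hilliard equation with logarithmic potential (see \cite{Giorgini2019,HeStrong}), this yields a unique pair $(\phi,\mu)$ with the asserted regularity and, crucially, a strict separation bound $\|\phi(t)\|_{L^{\infty}}\le 1-\delta_m$ on $[0,T]$. The assumption $\mu_0\in H^1$ enters precisely here: through the elliptic problem with singular nonlinearity it forces an initial separation of $\phi_0$ from $\pm1$, which is then propagated in time. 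The separation is the decisive ingredient, since it bounds $W''(\phi)$ and $W'''(\phi)$ in $L^{\infty}$, and, combined with the elliptic estimates of Lemma \ref{phipriori}, promotes $\phi$ to $L^{\infty}(0,T;H^3)\cap L^{2}(0,T;H^4)$ and $\mu$ to $L^{2}(0,T;H^3)$.

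Second, given $\u^{*}$, solve the quasilinear advection--diffusion equation (\ref{semitheta}) for $\theta$ with datum $\theta_0\in H^2\cap H^1_0$; parabolic theory for equations with bounded Lipschitz conductivity (see \cite{ZZF2013,HW2017}) provides a unique $\theta\in L^{\infty}(0,T;H^2)\cap L^{2}(0,T;H^3)\cap H^1(0,T;H^1)$ obeying the maximum principle (\ref{semimaximum}). Inserting $(\phi,\mu,\theta)$ into (\ref{semiu}) tested against $\w_1,\dots,\w_m$ reduces the velocity equation to a system of $m$ ordinary differential equations for the coefficients $g_i$, whose right-hand side depends continuously on $\u^{*}$ through the two subproblems; the Carath\'eodory existence theorem then produces a new velocity $\u=\mathcal{T}(\u^{*})\in C([0,T];\H_m)$. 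The a priori bounds behind Theorem \ref{exe-approx}, the maximum principle, and the uniform separation constant show that, for $R$ large and $T$ small, $\mathcal{T}$ maps $B_R$ into itself.

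Finally, I would show that $\mathcal{T}$ is a contraction on $B_R$ once $T=T_m$ is taken small enough, which delivers existence and uniqueness of the fixed point simultaneously. \emph{The main obstacle} is to establish the continuous-dependence estimates for the two subproblems with respect to the advecting velocity. For the Cahn--Hilliard part one estimates the difference of two solutions in a low-order norm, using that the uniform separation $\delta_m$ makes $W'$ Lipschitz on the common range of the two phase fields; for the temperature part the quasilinear term $\nabla\cdot(\kappa(\theta)\nabla\theta)$ requires controlling differences of $\kappa(\theta)$ via the $L^{\infty}$ bound on $\theta$ and the mean value theorem. Since $\u^{*}$ enters only through transport terms, the constants in these stability estimates degrade like positive powers of $T$, so they can be absorbed by shrinking $T_m$; this yields the contraction and hence the unique local strong approximate solution, with the separation bound from the first step furnishing exactly the claimed $\|\phi^{m}(t)\|_{L^{\infty}}\le 1-\delta_m$.
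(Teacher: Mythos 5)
Your proposal is correct and structurally identical to the paper's Appendix~B: freeze the velocity in $C([0,T];\H_m)$, solve the two decoupled subsystems --- obtaining the separation $\|\phi^m(t)\|_{L^\infty}\leqslant 1-\delta_m$ from the bound $\mu^m\in L^\infty(0,T;H^1)$ via Lemma~\ref{mu and phi} and Remark~\ref{remark mu and phi}, and the higher regularity of $\theta^m$ via the transform $\Theta^m=\int_0^{\theta^m}\kappa(s)\,\mathrm{d}s$ --- then reduce the velocity equation to an ODE system for the Galerkin coefficients and close with a fixed point. The one genuine divergence is the fixed point theorem: the paper reuses the Schauder argument of Appendix~A (the map $\v^m\mapsto\u^m$ is compact from $C([0,T_m];\H_m)$ into itself and preserves a ball), whereas you run a Banach contraction on $B_R$. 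Your route is viable because the paper's own stability estimates (\ref{continuous1})--(\ref{continuous4}) combine into (\ref{lianxu}), whose right-hand side carries the factor $\int_0^{T}\|\nabla(\v_1^m-\v_2^m)\|^2\,\mathrm{d}t\leqslant C_m T\sup_t\|\v_1^m-\v_2^m\|^2$ by norm equivalence on the finite-dimensional space $\H_m$, so the Lipschitz constant is $O(T)$; and it has the advantage of delivering directly the uniqueness asserted in Theorem~\ref{strongsemi}, which Schauder alone does not. Two small cautions: the separation is not really ``propagated'' from the initial separation of $\phi_0$ --- it is re-derived at every $t$ from the uniform-in-time bound $\|\mu^m(t)\|_{H^1}\leqslant C$, and that bound must be obtained \emph{before} any separation is assumed, using only $W''\geqslant-\alpha$ (as in the $\|\nabla\mu\|$ estimate of Section~4.2), with the bootstrap to $H^3$/$H^4$ regularity coming afterwards; and the constants in your contraction estimate must be checked to be uniform over $B_R$, which holds because the subproblem norms entering them are controlled by $R$ and the data.
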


The proof of Theorem \ref{strongsemi} can be found in Appendix \ref{semigalerkin_strongsolution}.

\subsection{Higher-order estimates}
Again, we drop the superscript $m$ in the equations (\ref{semiu})-(\ref{semiboundary}) for the sake of simplicity.
First of all, since now $\theta_0 \in H^2(\Omega) \hookrightarrow C^{\gamma}(\overline{\Omega})$ for some $\gamma > 0$,
the analysis in the weak solution part yields the following uniform estimates
\begin{align}
	&\| \u \|_{L^{\infty}(0, \infty; \L)} + \| \phi\|_{L^{\infty}(0, \infty; H^1)} +\| \theta \|_{L^{\infty}(0, \infty; C^{\beta}(\overline{\Omega}) \cap H^1)} + \sup\limits_{\tau \geqslant 0} \| \u\|_{L^2 ( \tau, \tau + 1 ; \V)} \notag \\
	&\quad +
	\sup\limits_{\tau \geqslant 0} \| \mu \|_{L^2 ( \tau, \tau + 1 ; H^1)} +
	\sup\limits_{\tau \geqslant 0} \| \theta \|_{L^2( \tau, \tau + 1 ; H^2)}
	\leqslant C_1,
	\label{loweroderestimate}
\end{align}
for some $\beta \in (0 , \gamma]$.
Here, the positive constant $C_1$ depends on norms of the initial data, $\Omega$ and parameters of the system.
In the following proof, $\varepsilon$ stands for a small positive constant,  and $C$ depends on $\varepsilon$, $\Omega$, parameters of the system and $C_1$ unless additionally pointed out. We stress that $C$ does not depend on time, which is crucial. Also $C$ may have different values from line to line.

\medskip
	
\textbf{Estimate for $\|\Delta \theta\|$.} Higher order estimates for $\theta$ can be achieved by the transform $\Theta(x,t) = \int_0^{\theta(x,t)} \kappa(s) \,\mathrm{d}s$ discussed in Appendix \ref{semigalerkin_weaksolution}. As stated in \cite{HW2017,ZZF2013}, we have
\begin{align} \label{Deltatheta}
\| \Delta \theta\| \leqslant C(\| \theta_t\| + \|\nabla \u \|).
\end{align}

  \textbf{Estimate for $\|\nabla \u\|$.} Testing equation (\ref{semiu}) by $\u_t$, we have	
\begin{align}
\frac{\mathrm{d}}{\mathrm{d} t} \int_{\Omega} \nu(\theta)|\D \u|^2  \mathrm{d} x
+\left\|\u_t\right\|^2 =&
-\int_{\Omega}(\u \cdot \nabla \u)\cdot \u_t \,\mathrm{d} x +
\int_{\Omega} \nu^{\prime}(\theta) \theta_t | \D \u|^2 \,\mathrm{d} x
\notag \\
&-\int_{\Omega} \nabla \cdot \left( \lambda(\theta) \nabla \phi \otimes \nabla \phi \right) \cdot \u_t \,\mathrm{d} x +
\int_{\Omega} \theta \g \cdot \u_t \,\mathrm{d} x
\notag \\
:=& K_1+K_2+K_3+K_4. \label{K1toK4}
\end{align}
The terms $K_1$, $K_2$ and $K_4$ can be estimated by using the Gagliardo-Nirenberg inequality, H\"older's inequality, Young's inequality  and \eqref{loweroderestimate}:
\begin{align}
K_1  \leqslant& \left\|\u_t\right\| \| \u\|_{\boldsymbol{L}^4} \|\nabla \u \|_{\boldsymbol{L}^4}
\notag \\
\leqslant& C \left\|\u_t\right\| \| \u\|^{\frac{1}{2}} \|\nabla \u\|^{\frac{1}{2}} \|\nabla \u\|^{\frac{1}{2}}   \|\Delta \u\|^{\frac{1}{2}}
\notag \\
\leqslant& \varepsilon\left\|\u_t\right\|^2 + C\|\nabla \u\|^2  \|\Delta \u\|
\notag \\
\leqslant& \varepsilon\left\|\u_t\right\|^2+\varepsilon\|\Delta \u\|^2+C\|\nabla \u\|^4,
\label{K1}
\end{align}
\begin{align}
K_2  \leqslant& C \|\nu^{\prime}(\theta)\|_{L^{\infty}} \| \theta_t \| \|\nabla \u\|_{\boldsymbol{L}^4}^2
\notag \\
\leqslant& C \|\theta_t\| \|\nabla \u\| \|\Delta \u\|
\notag \\
\leqslant&  \varepsilon\|\Delta \u\|^2 + C\|\theta_t\|^2 \|\nabla \u\|^2,
\end{align}
\begin{align}
K_4 \leqslant& C\|\theta\|\left\|\u_t\right\| \leqslant \varepsilon \left\|\u_t\right\|^2+C\|\theta\|^2.
\label{K4}	
\end{align}
To estimate the term $K_3$, we calculate the divergence, and treat the two terms respectively. Taking advantage of Lemma \ref{mu and phi}, Lemma \ref{phipriori} (here we use both (\ref{Inequality-phi1}), (\ref{Inequality-phi2})) and (\ref{Deltatheta}), we have
\begin{align}
K_3 =& -\int_{\Omega} \lambda^{\prime}(\theta) \nabla \theta \cdot(\nabla \phi \otimes \nabla \phi) \cdot \u_t\,  \mathrm{d} x
-\int_{\Omega} \lambda(\theta)\left(\Delta \phi \nabla \phi+\nabla \phi \cdot \nabla^2 \phi\right) \cdot \u_t \,\mathrm{d} x
\notag \\
\leqslant& \varepsilon\left\|\u_t\right\|^2 + C\left(\left\|\lambda^{\prime}(\theta)\right\|_{L^ {\infty}}\| \nabla \theta\|_{\boldsymbol{L}^4}\|\nabla \phi\|_{\boldsymbol{L}^8}^2\right)^2 +
C \left(\|\lambda(\theta)\|_{L^{\infty}}\|\phi\|_{W^{2,4}} \|\nabla \phi\|_{\boldsymbol{L}^4} \right)^2
\notag \\
\leqslant & \varepsilon\left\|\u_t\right\|^2 + C\|\Delta \theta\|^2 \| \phi \|_{H^2}^4 + C(1 + \| \nabla \mu\|^2) \| \phi \|_{H^2}^2
\notag \\
\leqslant &  \varepsilon\left\|\u_t\right\|^2 + C\| \Delta \theta\|^2 (\|\phi\|^4 + \| \nabla \mu\|^2) +
C(1 + \|\nabla \mu\|^2) (\|\phi\|^2 + \|\nabla \mu\|^2)
\notag \\
\leqslant & \varepsilon\left\|\u_t\right\|^2 + C\| \theta_t\|^4 + C\| \nabla \u\|^4 + C(\|\phi\|^2 +\|\phi\|^4 + \|\phi\|^8 + \|\nabla \mu\|^2 + \| \nabla \mu\|^4)
\notag \\
\leqslant & \varepsilon\left\|\u_t\right\|^2 + C\| \theta_t\|^4 + C\| \nabla \u\|^4 + C\|\phi\|^2 + C \|\nabla \mu\|^2 + C \|\nabla \mu\|^4.
\label{K3}
\end{align}
\medskip

\textbf{Estimate for $\|\Delta \u\|$.} For the Navier-Stokes equations with temperature-dependent (and thus, space-dependent) viscosity, we recall the following estimates (see e.g., \cite{ZZF2013}):

\medskip

\begin{lemma} \label{stokes}
Let $\Omega\subset \mathbb{R}^2$ be a bounded domain with smooth boundary. Consider the boundary value problem
\begin{equation} \label{stokesequation}
\begin{cases}
&-\nabla \cdot (2 \nu(x) \D \u) + \nabla p = \mathbf{f}, \quad \text{in}\  \Omega,  \\
&\nabla \cdot \u = 0, \quad \text{in}\ \Omega, \\
&\u = 0, \quad \text{on}\ \partial \Omega.
\end{cases}
\end{equation}
If $\mathbf{f} \in \boldsymbol{L}^2(\Omega)$, then problem \eqref{stokesequation} admits a unique solution $(\u,p) \in \boldsymbol{H}^2(\Omega) \times H^1(\Omega)$ with $\overline{p}=0$, such that the following inequality holds:
\begin{align}
&\|\u\|_{\V} + \|p\| \leqslant C\|\mathbf{f}\|_{\boldsymbol{H}^{-1}},  \label{Stokesestimate1}
\\
&\|\u\|_{\boldsymbol{H}^2} + \| \nabla p \| \leqslant C\big(\|\mathbf{f}\| + (1 + \|\nu\|_{H^1} \|\nu\|_{H^2}) \| \nabla \u\| + \|p\|\big),
\label{Stokesestimate2}
\end{align}
where the positive constant $C$ only depends on $\Omega$ and $\nu$.
\end{lemma}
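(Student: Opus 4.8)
The plan is to prove the two estimates separately, obtaining \eqref{Stokesestimate1} by a variational argument and reducing \eqref{Stokesestimate2} to the constant-coefficient Stokes regularity theory \eqref{Pre-Stokesestimate}.

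For \eqref{Stokesestimate1} I would work in the solenoidal space $\V$ and consider the bilinear form $a(\u,\v)=\int_\Omega 2\nu(x)\,\D\u:\D\v\,\dx$. The lower bound $\nu\geqslant\underline{\nu}$ together with Korn's inequality \eqref{Korn} yields coercivity, $a(\u,\u)\geqslant 2\underline{\nu}\|\D\u\|^2\geqslant\underline{\nu}\|\nabla\u\|^2$, while $\nu\leqslant\overline{\nu}$ gives boundedness; hence Lax--Milgram produces a unique $\u\in\V$ with $a(\u,\v)=\langle\mathbf{f},\v\rangle$ for all $\v\in\V$. Testing with $\v=\u$ and using the Poincar\'e inequality gives $\|\u\|_{\V}\leqslant C\|\mathbf{f}\|_{\boldsymbol{H}^{-1}}$. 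The pressure is recovered from de Rham's theorem, and its $L^2$-bound (with $\overline{p}=0$) follows from the inf--sup condition exactly as in \eqref{Pre-Stokespressure}: for any $\v\in\boldsymbol{H}^1_0$ one has $(p,\nabla\cdot\v)=a(\u,\v)-\langle\mathbf{f},\v\rangle$, which is controlled by $C(\|\nabla\u\|+\|\mathbf{f}\|_{\boldsymbol{H}^{-1}})\|\nabla\v\|$, giving $\|p\|\leqslant C\|\mathbf{f}\|_{\boldsymbol{H}^{-1}}$.

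The core of the proof is \eqref{Stokesestimate2}. Since $\nabla\cdot\u=0$, a direct computation yields the pointwise identity $\nabla\cdot(2\nu\D\u)=\nu\Delta\u+2(\nabla\nu\cdot\D\u)$, so the equation reads $-\nu\Delta\u+\nabla p=\mathbf{f}+2\nabla\nu\cdot\D\u$. Dividing by $\nu$ and writing $\nu^{-1}\nabla p=\nabla(p/\nu)+\nu^{-2}p\nabla\nu$, I would recast the system as a \emph{constant}-coefficient Stokes problem for the pair $(\u,\,p/\nu)$, namely $-\Delta\u+\nabla(p/\nu)=\mathbf{g}$ with $\nabla\cdot\u=0$ and $\u|_{\partial\Omega}=\mathbf{0}$, where $\mathbf{g}=\nu^{-1}\mathbf{f}+2\nu^{-1}\nabla\nu\cdot\D\u-\nu^{-2}p\nabla\nu\in\boldsymbol{L}^2$. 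Applying the standard regularity theory for the constant-coefficient Stokes problem (cf. \eqref{Pre-Stokesestimate}) then gives $\|\u\|_{\boldsymbol{H}^2}+\|\nabla(p/\nu)\|\leqslant C\|\mathbf{g}\|$, and $\nabla p=\nu\nabla(p/\nu)+(p/\nu)\nabla\nu$ recovers $\|\nabla p\|$, the term $(p/\nu)\nabla\nu$ being absorbable by the estimates below.

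It remains to bound $\|\mathbf{g}\|$, which is the main obstacle, since it contains terms that are formally of the same order as the left-hand side. The contribution $\nu^{-1}\mathbf{f}$ produces the $\|\mathbf{f}\|$ term. For the first-order term I would invoke the two-dimensional Gagliardo--Nirenberg inequality (Lemma \ref{Gagliardo-Nirenberg Inequality}) twice, in the forms $\|\nabla\nu\|_{\boldsymbol{L}^4}\leqslant C\|\nu\|_{H^1}^{1/2}\|\nu\|_{H^2}^{1/2}$ and $\|\D\u\|_{\boldsymbol{L}^4}\leqslant C\|\nabla\u\|^{1/2}\|\u\|_{\boldsymbol{H}^2}^{1/2}$, so that $\|\nabla\nu\cdot\D\u\|\leqslant C\|\nu\|_{H^1}^{1/2}\|\nu\|_{H^2}^{1/2}\|\nabla\u\|^{1/2}\|\u\|_{\boldsymbol{H}^2}^{1/2}$; Young's inequality then splits this into $\varepsilon\|\u\|_{\boldsymbol{H}^2}$, to be absorbed on the left, plus $C\|\nu\|_{H^1}\|\nu\|_{H^2}\|\nabla\u\|$, which is precisely the stated coefficient. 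The pressure-coupling remainder is treated analogously, with $\|p\nabla\nu\|\leqslant\|p\|_{L^4}\|\nabla\nu\|_{\boldsymbol{L}^4}$ and $\|p\|_{L^4}\leqslant C\|p\|^{1/2}\|\nabla p\|^{1/2}$ (using $\overline{p}=0$), so that Young's inequality yields an absorbable $\varepsilon\|\nabla p\|$ plus a multiple of $\|p\|$. Choosing $\varepsilon$ small and moving the $\|\u\|_{\boldsymbol{H}^2}$ and $\|\nabla p\|$ contributions to the left-hand side closes \eqref{Stokesestimate2}. The delicate point throughout is that the variable viscosity forces genuinely second-order and pressure-gradient remainders into the source term, and the argument only closes because, in two dimensions, the embeddings $H^1\hookrightarrow L^4$ and $H^2\hookrightarrow W^{1,4}$ render these remainders subordinate to the principal part.
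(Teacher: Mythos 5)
The paper offers no proof of Lemma \ref{stokes}: it is simply quoted from \cite{ZZF2013}, and your argument is essentially a reconstruction of the proof given there, so in that sense it takes the same route as the source. Your variational treatment of \eqref{Stokesestimate1}, the identity $\nabla \cdot (2\nu \D \u) = \nu \Delta \u + 2\,\D\u\,\nabla\nu$ (valid because $\nabla\cdot\u=0$), the rewriting $\nu^{-1}\nabla p = \nabla(p/\nu) + p\,\nu^{-2}\nabla\nu$ so that $(\u, p/\nu)$ solves a constant-coefficient Stokes problem, and the Gagliardo--Nirenberg/Young absorption of the commutator terms are all correct and are exactly the standard argument. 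Two minor caveats. First, your estimate of the pressure remainder, $\|p\,\nu^{-2}\nabla\nu\| \leqslant C\|p\|_{L^4}\|\nabla\nu\|_{\boldsymbol{L}^4} \leqslant \varepsilon\|\nabla p\| + C\,\|\nu\|_{H^1}\|\nu\|_{H^2}\,\|p\|$, unavoidably attaches the factor $1+\|\nu\|_{H^1}\|\nu\|_{H^2}$ to $\|p\|$ as well as to $\|\nabla\u\|$; this is a harmless deviation from the literal form of \eqref{Stokesestimate2} (which one should read with the constant $C$ allowed to depend on $\nu$, e.g. through $\|\nabla\nu\|_{\boldsymbol{L}^4}$), but you should state the inequality you actually obtain. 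Second, the absorption argument is an \emph{a priori} estimate presupposing $\u\in\boldsymbol{H}^2$ and $p\in H^1$; to upgrade the Lax--Milgram solution to that class you need a short bootstrap (e.g. $\nabla\nu\cdot\D\u\in\boldsymbol{L}^{4/3}$ gives $\u\in\boldsymbol{W}^{2,4/3}\hookrightarrow\boldsymbol{W}^{1,4}$ in two dimensions, whence your source term $\mathbf{g}$ is in $\boldsymbol{L}^2$) or an approximation of $\nu$; this is routine but should be mentioned. Note also that \eqref{Pre-Stokesestimate} is stated for solenoidal data, so a word about the Helmholtz decomposition of $\mathbf{g}$ (its gradient part being absorbed into the pressure) would make the appeal to the constant-coefficient theory airtight.
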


~
	
\noindent We write equation (\ref{semiu}) in the sense of distribution as
\begin{align} \label{Stokesform}
-\nabla \cdot( 2 \nu(\theta(x)) \D \u) + \nabla p = \mathbf{f},
\end{align}
where
\begin{align} \label{Stokesf}
\mathbf{f} = -\u_t-\u \cdot \nabla \u-\nabla \cdot\left[\lambda(\theta) \nabla \phi \otimes \nabla \phi+\lambda(\theta)\left(\frac{1}{2}|\nabla \phi|^2+W(\phi)\right) \mathbb{I}_2\right] + \theta \g .
\end{align}
Inspired by \cite{HW2017}, we proceed to estimate $\| \mathbf{f} \|$ term by term. In \cite{HW2017}, the author considered a phase function $\phi$ satisfying the Allen-Cahn equation. Here, the Cahn-Hilliard equation yields better spatial regularity of $\phi$, and thus leads to an improved estimate for $\| \mathbf{f} \|$. As a consequence, it holds
\begin{align}
\|\mathbf{f}\| \leqslant & \left\|\u_{t}\right\| +
\|\u\|_{\boldsymbol{L}^4} \|\nabla \u\|_{\boldsymbol{L}^4} + C\|\theta\|
\notag \\
&+  \| \lambda (\theta)\|_{L^{\infty}} \|\phi\|_{W^{2,4}}
\|\nabla \phi\|_{\boldsymbol{L}^4}
+ \left\|\lambda^{\prime}(\theta)\right\|_{L^{\infty}} \|\nabla \theta\|_{\boldsymbol{L}^4}
\|\nabla \phi \|_{\boldsymbol{L}^8}^2
\notag \\
&+ C \|\lambda^{\prime}(\theta)\|_{L^{\infty}}
\|\nabla \theta\| \| W(\phi)\|_{L^{\infty}}
+ \|\lambda(\theta)\|_{L^{\infty}} \|W^{\prime}(\phi)\|_{L^4}
\| \nabla \phi \|_{\boldsymbol{L}^4}
\notag \\
\leqslant & \left\|\u_t\right\|+C \| \u \| \|\nabla \u\|^{\frac{1}{2}} \|\nabla \u\|^{\frac{1}{2}}
\|\Delta \u\|^{\frac{1}{2}} + C\|\theta\|
\notag \\
&+ C(1 + \| \nabla \mu\|) \| \phi \|_{H^2}
+ C \|\Delta \theta\| \| \phi \|_{H^2}^2 + C \| \nabla \theta \|
\notag \\
\leqslant & \varepsilon \| \Delta \u\| + C\| \nabla \u\|^2 + \|\u_t\| + C\|\nabla \theta\|
\notag \\
&+ C(1 + \| \nabla \mu\|) (\|\phi\| + \|\nabla \mu\|) +
C(\|\theta_t\| + \|\nabla \u\|) (\|\phi\|^2 +\|\nabla \mu\|)
\notag \\
\leqslant & \varepsilon \| \Delta \u\| + C\| \nabla \u\|^2 + \|\u_t\| + C\|\nabla \theta\|
\notag \\
&+ C\left(\| \phi \| + \|\phi\|^2  + \|\nabla \mu\|^2 \right)+
C(\|\theta_t\| + \|\nabla \u\|) (\|\phi\|^2 +\|\nabla \mu\|),
\label{Stokesfestimate}
\end{align}
where we have used Lemma \ref{mu and phi} to handle $\|\phi\|_{W^{2,4}}$ and $\|W'(\phi)\|_{L^4}$.
In order to apply Lemma \ref{stokes}, we also need to estimate $\|\nu(\theta)\|_{H^1}$ and $\|\nu(\theta)\|_{H^2}$:
\begin{align}
\|\nu(\theta)\|_{H^1} &= \| \nu(\theta) \| + \| \nu^{\prime} (\theta) \nabla \theta \|
\leqslant C(1 + \|\nabla \theta \|), \label{nuh1}\\
\|\nu(\theta)\|_{H^2} &= \|\nu(\theta)\|_{H^1} + \| \nu^{\prime \prime}(\theta) |\nabla \theta|^2 \| +
\| \nu^{\prime} (\theta) \nabla^2 \theta \| \notag \\
&\leqslant \|\nu(\theta)\|_{H^1} + C \| \nabla \theta \|_{\boldsymbol{L}^4}^2 + C \|\Delta \theta \| \notag \\
&\leqslant C(1 + \|\nabla \theta \|) + C\| \theta\|_{L^{\infty}} \| \Delta \theta \| + C \|\Delta \theta \| \notag \\
&\leqslant  C(1 + \|\Delta \theta \|), \label{nuh2}
\end{align}
where we have used the Gagliardo-Nirenberg inequality and standard elliptic estimates. Combining estimates (\ref{Deltatheta}), (\ref{nuh1}), (\ref{nuh2}), we have
\begin{align}
\|\nu(\theta)\|_{H^1} \|\nu(\theta)\|_{H^2}	&\leqslant C(1 + \|\nabla \theta\| + \|\Delta \theta\| +
\|\nabla \theta\| \|\Delta \theta\|) \notag \\
& \leqslant C(1 + \|\Delta \theta\|)  \notag \\
& \leqslant C(1 + \|\theta_t\| + \|\nabla \u\|). \label{nuh1nuh2}
\end{align}
Thus, we can deduce from Lemma \ref{stokes} and (\ref{Stokesestimate1}), (\ref{Stokesestimate2}) that
\begin{align}
\| \Delta \u\| &\leqslant C\big( \| \mathbf{f} \| + (1 + \|\nu(\theta)\|_{H^1} \|\nu(\theta)\|_{H^2}) \| \nabla \u \| + \| p \|\big) \nonumber \\
&\leqslant C\left( \varepsilon \|\Delta \u \| + \|\u_t\| + \|\nabla \u\| + \|\nabla \u\|^2 + \| \nabla \theta\| + \|\phi\| + \|\nabla \mu\|^2 + \|\theta_t\|^2 \right).
\end{align}
Taking $\varepsilon$ sufficiently small, we get
\begin{align} \label{Stokesuestimate}
\| \Delta \u\| &\leqslant C\left(\|\u_t\| + \|\nabla \u\| + \|\nabla \u\|^2 + \| \nabla \theta\| + \|\phi\| + \|\nabla \mu\|^2 + \|\theta_t\|^2 \right). 	
\end{align}
The above estimates together with \eqref{K1toK4} entail that
\begin{align}
&  \frac{\mathrm{d}}{\mathrm{d} t}  \int_{\Omega} \nu(\theta) |\D \u|^2 \,\mathrm{d} x   +
\left[1 - (C+3)\varepsilon \right] \|\u_t\|^2 \nonumber \\
&\quad \leqslant C\big(\|\nabla \u\|^4 + \|\theta_t\|^4 + \|\nabla \mu\|^4 \big)
 + C \big( \|\nabla \u\|^2 + \|\nabla \theta\|^2 + \|\phi\|^2 + \| \nabla \mu\|^2\big).
\label{Higherestimate:u}
\end{align}
\smallskip

  \textbf{Estimates for $\|\nabla \mu\|$.}  Testing equation (\ref{semiphi}) by $\mu_t \in L^2(0,T; (H^1)^{\prime})$, we get 	
\begin{align} \label{nablamu}
\frac{1}{2} \frac{\mathrm{d}}{\mathrm{d} t} \| \nabla \mu\|^2 +
\langle \mu_t,\phi_t \rangle_{H^1} +
\langle  \mu_t,\u \cdot \nabla \phi \rangle_{H^1} = 0,
\end{align}
where
\begin{align} \label{nablamu1}
\left\langle\mu_t,\u \cdot \nabla \phi \right\rangle_{H^1} = \frac{\mathrm{d}}{\mathrm{d} t}\left(\u \cdot \nabla \phi, \mu \right) - \left(\u_t \cdot \nabla \phi, \mu \right)
-\left(\u \cdot \nabla \phi_t, \mu\right).
\end{align}		
Using the Sobolev embedding theorem and the inequality (recall \eqref{meanmum})
\begin{align}
\| \mu \|_{H^1} \leqslant C ( 1 + \| \nabla \mu \|),
\label{muH1}
\end{align}
 we have
\begin{align}
\left(\u_t \cdot \nabla \phi, \mu\right) & \leqslant
\|\nabla \phi\|{ }_{\boldsymbol{L}^3}
\left\|\u_t\right\|
\| \mu \|_{L^6}
\notag \\
& \leqslant \varepsilon\left\|\u_t\right\|^2 +
C\|\phi\|_{H^2}^2\left(1+\left\|\nabla \mu\right\|^2\right)
\notag \\
& \leqslant \varepsilon\left\|\u_t\right\|^2 + C (\|\phi\|^2 + \|\nabla \mu\|^2)
( 1 + \| \nabla \mu \|^2  )
\notag \\
& \leqslant \varepsilon\left\|\u_t\right\|^2 + C (\|\phi\|^2 + \|\phi\|^4 + \| \nabla \mu\|^2 + \| \nabla \mu\|^4 ),
\label{nablamu2}
\end{align}	
and
\begin{align} \label{nablamu3}
(\u \cdot \nabla \phi_t, \mu) &\leqslant
\|\u\|_{\boldsymbol{L}^4}
\| \nabla \phi_t\|
\|\mu\|_{L^4} \notag \\
&\leqslant C \| \u \|^{\frac{1}{2}} \| \nabla \u \|^{\frac{1}{2}} \| \nabla \phi_t \| \| \mu \|^{\frac{1}{2}} (1 + \| \nabla \mu\|)^{\frac{1}{2}} \notag \\
&\leqslant \varepsilon \| \nabla \phi_t\|^2 + C \| \nabla \u\| \| \mu \| (1 + \| \nabla \mu \| ) \notag \\
&\leqslant \varepsilon \| \nabla \phi_t\|^2 + C \| \mu \|^2 + \| \nabla \u\|^2 (1 + \| \nabla \mu\|^2) \notag \\
&\leqslant \varepsilon \| \nabla \phi_t\|^2 + C \| \mu \|^2 + \| \nabla \u\|^2 + \| \nabla \u\|^4 + \| \nabla \mu\|^4.
\end{align}	
Applying the interpolation (\ref{Pre-V0interpolation}) to $\|\phi_t\|$ yields
\begin{align}
\left \langle  \mu_t,\phi_t \right \rangle_{H^1} &=\left\|\nabla \phi_t\right\|^2+\left(W^{\prime \prime}(\phi)  \phi_t, \phi_t\right)
\notag \\
& \geqslant \left\|\nabla \phi_t\right\|^2-\alpha\left\|\phi_t\right\|^2
\notag \\
& \geqslant \left\|\nabla \phi_t\right\|^2 - \left(  \frac{1}{2} \| \nabla \phi_t\|^2 +
\frac{\alpha^2}{2} \| \phi_t\|_{(H^1)^{\prime}}^2 \right)
\notag \\
& \geqslant \frac{1}{2} \|\nabla \phi_t\|^2 - \frac{\alpha^2}{2} \| \phi_t\|_{(H^1)^{\prime}}^2.
\label{nablamu4}
\end{align}	
Besides, from equation (\ref{semiphi}), it is not difficult to derive the estimate (see e.g., \cite{longtime2022}):	
\begin{align} \label{phit}
\| \phi_t \|_{(H^1)^{\prime}} \leqslant C ( \|\u\| + \|\nabla \mu\| ).
\end{align}
Combining \eqref{nablamu2}-(\ref{phit}) and taking $\varepsilon$ sufficiently small, we infer from (\ref{nablamu}) that
\begin{align}
&\frac{\mathrm{d}}{\mathrm{d} t} \left( \frac{1}{2} \|\nabla \mu\|^2 + (\u \cdot \nabla \phi, \mu) \right) + \left( \frac{1}{2} - \varepsilon \right) \| \nabla \phi_t\|^2
\notag \\
& \quad \leqslant \varepsilon \|\u_t\|^2 + C\big( \| \nabla \u \|^2 + \| \nabla \u\|^4 + \|\mu\|^2 + \|\phi\|^2 + \|\nabla \mu\|^2 + \|\nabla \mu\|^4\big).	
\label{nablamu and phit}
\end{align}

\textbf{Estimates for $\|\theta_t\|$.} Differentiating equation (\ref{semitheta}) by $t$, testing the resultant by $\theta_t$, and using the Gagliardo-Nirenberg inequality for $\theta$, we have
\begin{align}
& \frac{1}{2} \frac{\mathrm{d}}{\mathrm{d} t}\left\|\theta_t\right\|^2  +
\int_{\Omega} \kappa(\theta)|\nabla \theta_t|^2 \,\mathrm{d} x \notag\\
&\quad =
-\int_{\Omega} \kappa^{\prime}(\theta) \theta_t \nabla \theta \cdot \nabla \theta_t \,\mathrm{d} x -\int_{\Omega}\left(\u_t \cdot \nabla \theta\right) \theta_t \,\mathrm{d} x - \underbrace{ \int_{\Omega} (\u \cdot \nabla \theta_t) \theta_t \dx }_{=0}
\notag \\
&\quad \leqslant \| \kappa^{\prime}(\theta) \|_{L^{\infty}} \| \nabla \theta_t\| \|\theta_t\|_{L^4} \| \nabla \theta \|_{\boldsymbol{L}^4} + \|\u_t\| \|\nabla \theta\|_{\boldsymbol{L}^4} \|\theta_t\|_{L^4}
\notag \\
&\quad \leqslant \varepsilon \|\nabla \theta_t\|^2 + \varepsilon \|\u_t\|^2 +
C\| \nabla \theta_t\| \|\theta_t\| \|\theta\|_{L^{\infty}} \|\Delta \theta\|
\notag \\
&\quad \leqslant 2\varepsilon \|\nabla \theta_t\|^2 + \varepsilon \|\u_t\|^2 + C\|\theta_t\|^2 \|\Delta \theta\|^2 \notag \\
&\quad \leqslant 2\varepsilon \|\nabla \theta_t\|^2 + \varepsilon \|\u_t\|^2 + C\|\theta_t\|^4 + C\| \nabla \u\|^4.
\label{thetat}
\end{align}
Here, we have also used the estimate (\ref{Deltatheta}).

Combining all the estimates above, we can deduce that
\begin{align}
\frac{\mathrm{d}}{\mathrm{d} t} & \left(
\int_{\Omega} \nu(\theta) | \D \u|^2 \mathrm{d} x
+ \frac{1}{2}\|\nabla \mu\|^2
+ \frac{1}{2}\|\theta_t\|^2 + (\u \cdot \nabla \phi, \mu) \right)
\notag \\
&+  (\underline{\kappa} - 2\varepsilon) \|\nabla \theta_t\|^2 +
\left( \frac{1}{2} - \varepsilon \right) \|\nabla \phi_t\|^2 +
(1 - (C+5) \varepsilon) \|\u_t\|^2
\notag \\
\leqslant & C (\|\nabla \u \|^4 +  \|\theta_t\|^4 + \| \nabla \mu \|^4)
+ C(\| \u \|^2 + \| \nabla \u \|^2 + \|\phi\|^2 + \| \mu \|^2 + \|\nabla \mu\|^2 + \|\nabla \theta\|^2)
\notag \\
\leqslant & C (\|\nabla \u \|^4 +  \|\theta_t\|^4 + \| \nabla \mu \|^4)
+ C(\| \u \|^2_{\V} + \|\phi\|^2 + \| \mu \|^2_{H^1} + \|\theta\|_{H^1}^2),
\label{HigherEstimateAll}
\end{align}	
Define
\begin{align}
\beta(t) &= \int_{\Omega} \nu(\theta) |\D \u|^2 \dx + \frac{1}{2} \|\nabla \mu\|^2
+ \frac{1}{2} \|\theta_t\|^2 + (\u \cdot \nabla \phi, \mu),
\label{higherenergy}\\
\Gamma(t) &=  \frac{\underline{\kappa}}{2} \|\nabla \theta_t\|^2 +
\frac{1}{4} \|\nabla \phi_t\|^2 +
\frac{1}{2} \|\u_t\|^2,
\label{Gammat}\\
\mathcal{G}(t) &= \| \u \|^2_{\V} + \|\phi\|^2 + \| \mu \|^2_{H^1} + \|\theta\|_{H^1}^2.
\label{Gt}
\end{align}	
The coupling term $(\u \cdot \nabla \phi, \mu)$ in \eqref{higherenergy}  does not have a definite sign, but it will not cause any problem since
\begin{align*}
| (\u \cdot \nabla \phi, \mu) | &= | (\u \phi, \nabla \mu) | \\
&\leqslant \|\u\|_{\boldsymbol{L}^4} \| \phi \|_{L^4} \| \nabla \mu\| \\
&\leqslant C \| \u \|^{\frac{1}{2}} \|\nabla \u\|^{\frac{1}{2}} \| \phi \|_{L^4} \| \nabla \mu\| \\
&\leqslant \varepsilon \| \nabla \mu \|^2 +  C \| \phi \|_{L^4}^2 \|\nabla \u\| \\
&\leqslant \varepsilon \| \nabla \mu \|^2 + \varepsilon \|\nabla \u\|^2 + C \| \phi \|_{L^4}^4 \\
&\leqslant \varepsilon \| \nabla \mu \|^2 + \varepsilon \|\nabla \u\|^2 + C \| \phi \|^2.
\end{align*}
Taking $\varepsilon$ sufficiently small, we can deduce from \eqref{HigherEstimateAll} and Korn's inequality \eqref{Korn} that
\begin{align} \label{Higherenergyinequality}
\frac{\mathrm{d}}{\mathrm{d} t} \beta(t) + \Gamma(t) \leqslant C\beta(t)^2 + C\mathcal{G}(t).
\end{align}
Since
$$
\int_{t}^{t+1} \beta (\tau) \,\d \tau + \int_{t}^{t+1} \mathcal{G} (\tau) \,\d \tau \leqslant C, \quad \forall\, t \geqslant 0,
$$
the classical Gronwall's lemma together with the uniform Gronwall's lemma implies that $\beta(t) \leqslant C$ for all $t\geqslant 0$. Recalling \eqref{muH1}, we have
\begin{align}\label{beta}
\| \u(t) \|_{\V}^2 + \| \mu(t) \|_{H^1}^2 + \| \theta_t(t) \|^2 \leqslant C, \quad  \forall\, t \geqslant 0.
\end{align}
Here and hereafter, positive constants $C$ not only depend on $\Omega$, parameters of the system and $C_1$ (see \eqref{loweroderestimate}), but also depend on norms of the initial data.

Concerning $\Gamma(t)$, integrating \eqref{Higherenergyinequality} from $t$ to $t+1$, we obtain
\begin{align}\label{gamma}
\sup\limits_{t \geqslant 0} \int_{t}^{t + 1} \| \nabla \theta_t \|^2 + \| \nabla \phi_t \|^2 + \| \u_t \|^2 \,\d \tau \leqslant C.
\end{align}
The estimate $\sup\limits_{t \geqslant 0} \int_{t}^{t + 1} \| \u \|_{\boldsymbol{H}^2}^2\, \d \tau \leqslant C$ then
follows from (\ref{Stokesuestimate}). Further regularity of $\phi$ and $\mu$ can be deduced like in \cite{HeStrong}. According to   Remark \ref{remark mu and phi}, since $\mu \in L^{\infty}(0, \infty; H^1)$, there exists some small $\delta > 0$, such that
$$
\|\phi(t)\|_{C(\overline{\Omega})} \leqslant 1-\delta, \quad  \forall\, t \geqslant 0.
$$
As a consequence, the singular potential $W^{\prime}(\phi)$ becomes a smooth function. Applying standard elliptic estimates to (\ref{semimu}), we see that $\phi \in L^{\infty}(0 , \infty ; H^3)$. Subsequently, taking the gradient to (\ref{semiphi}), we can prove
$\sup\limits_{t \geqslant 0} \int_{t}^{t + 1} \| \mu \|_{H^3}^2 \,\d \tau \leqslant C$. Applying Laplacian to (\ref{semimu}), we further get
$\sup\limits_{t \geqslant 0} \int_{t}^{t + 1} \| \phi \|_{H^4}^2\, \d \tau \leqslant C$. Next, differentiating (\ref{semimu}) by $t$, by comparison, we can deduce that $\sup\limits_{t \geqslant 0} \int_{t}^{t + 1} \| \mu_t \|_{(H^1)^{\prime}} \,\d \tau \leqslant C$.
Concerning the estimate for $\theta$, we follow the strategy in \cite{HW2017}. Taking the transform $\Theta(x,t) = \int_0^{\theta(x,t)} \kappa(s)\, \mathrm{d}s$, and estimating $\| \Delta \Theta \|$ by using $\| \Theta_t \|$, we have $\Theta \in L^{\infty}(0 , \infty ; H^2)$. Therefore, $\theta \in L^{\infty}(0 , \infty ; H^2)$. To estimate $\| \theta \|_{H^3}$, we can apply gradient to the equation (\ref{semitheta}), and estimate term by term as in the \cite{HW2017}. In this manner, we obtain
$\sup\limits_{t \geqslant 0} \int_{t}^{t + 1} \| \theta \|_{H^3}^2\, \d \tau \leqslant C$.

Collecting the above estimates, we can prove the existence of a global strong solution by the standard compactness argument. Uniqueness again follows from the continuous estimate \eqref{continuousdependence}.

The proof of Theorem \ref{2dstrong} is complete.
\end{proof}

\bigskip

\textbf{Acknowledgments.} The author thanks the anonymous referees for valuable comments. In particular, the author is indebted to Prof. Andrea Giorgini for his constructive suggestions that have greatly improved this paper. The research of the author was partially supported by National Natural Science Foundation of China under Grant number 12071084 (PI: Hao Wu). He acknowledges the support of Prof. Hao Wu and helpful discussions.
\medskip

\appendix

\section{Semi-Galerkin Scheme for Weak Solutions} \label{semigalerkin_weaksolution}
The semi-Galerkin scheme (\ref{semiu})-(\ref{semiboundary}) can be solved by a fixed point argument. In this scheme, the unknown variables $\phi$ and $\theta$ are not approximated, and hence we cannot directly use the ODE theory to derive the existence of approximate solutions. To this end, we first fix some function $\v^m \in C([0, T] ; \H_m)$, and consider the subsystem for $(\phi^m,\mu^m,\theta^m)$. When a solution $(\phi^m,\mu^m,\theta^m)$ is obtained, we solve the subsystem for $\u^m$ and thus construct a mapping from $\v^m$ to $\u^m$. Under suitable assumptions, we will show that the fixed point argument can be applied.

\subsection{Solve $(\phi^m,\mu^m,\theta^m)$ with a given velocity $\v^m$.}
Let $T>0$. We
fix the velocity $\v^m=\sum\limits_{i=1}^m g_i^m(t) \w_i(x)$
$\in C([0, T] ; \H_m)$.
Then we consider the subsystem with convection:
\begin{align}
&\phi_t^m +\v^m \cdot \nabla \phi^m =\Delta \mu^m, \quad \text {in }\Omega \times (0, T), \label{semiphi,vm}\\
&\mu^m = - \Delta \phi^m + W^{\prime}(\phi^m), \quad \text {in }\Omega \times (0, T), \label{semimu,vm}\\
&\theta^m_t+\v^m \cdot \nabla \theta^m-\nabla \cdot(\kappa(\theta^m) \nabla \theta^m)=0, \quad \text {in }\Omega \times (0, T),
\label{semitheta,vm}\\
&\left.\phi^m\right|_{t=0}=\phi_0, \quad \left.\theta^m\right|_{t=0}=\theta_0, \quad \text{in}\ \Omega,
\label{semiinitial,vm}\\
&\left.\theta^m\right|_{\partial \Omega}=0, \left.\quad \partial_\mathbf{n} \phi^m\right|_{\partial \Omega}
= \left.  \partial_\mathbf{n} \mu^m \right|_{\partial \Omega}=0,\quad \text{on}\ \partial\Omega\times (0,T).
\label{semiboundary,vm}
\end{align}
Since the equations for $(\phi^m,\mu^m)$ and $\theta^m$ are decoupled in the above system, we can handle them separately.
\medskip

\textbf{Solvability of $\theta^m$.} Consider the system for $\theta^m$:
\begin{equation} \label{semi:thetamequation}
\begin{cases}
\theta_t^m+\v^m \cdot \nabla \theta^m=\nabla \cdot\left(\kappa\left(\theta^m\right) \nabla \theta^m\right), \quad \text { in } \Omega \times (0, T), \\
\theta^m=0, \quad \text { on } \partial \Omega \times(0, T), \\
\left.\theta^m\right|_{t=0}=\theta_0(x), \quad \text { in } \Omega.
\end{cases}	
\end{equation}
We have  
\medskip

\begin{lemma} \label{semi:thetamlemma}
Assume that $\v^m \in C\left([0, T] ; \H_m\right)$, $\theta_0 \in H_0^1(\Omega) \cap L^{\infty}(\Omega)$. Problem (\ref{semi:thetamequation}) admits a unique solution $\theta^m \in L^{\infty}\left(0, T ; H_0^1(\Omega) \cap L^{\infty}(\Omega)\right) \cap L^2\left(0, T ; H^2(\Omega)\right)$, which satisfies
\begin{align}
& \left\|\theta^m(t)\right\|_{L^{\infty}} \leqslant\left\|\theta_0\right\|_{L^{\infty}}, \quad \forall\, t \in[0, T],
\label{semi:thetammaximum}\\
& \sup _{t \in[0, T]}\left\|\theta^m(t)\right\|^2+2 \underline{\kappa} \int_0^T\left\|\nabla \theta^m(t)\right\|^2 \,\mathrm{d}t \leqslant\left\|\theta_0\right\|^2, \label{semi:thetamenergy1}
\\
& \sup _{t \in[0, T]}\left\|\theta^m(t)\right\|_{H^1}^2 +\int_0^T\left\|\theta^m(t)\right\|_{H^2}^2 \, \mathrm{d}t \leqslant C.
\label{semi:thetamenergy2}
\end{align}
Furthermore, we have the continuous dependence estimate:
\begin{equation} \label{continuous1}
\left\|\theta_1^m(t) - \theta_2^m(t)\right\|^2 +
\int_0^t \| \nabla (\theta_1^m-\theta_2^m)(\tau) \|^2\, \mathrm{d}\tau  \leqslant
C \int_0^t\left\|\v^m_1(\tau)-\v^m_2(\tau)\right\|_{\V}^2 \,\mathrm{d}\tau,
\end{equation}
for all $t \in[0, T]$, where $\theta_1^m$, $\theta_2^m$ are two solutions to problem (\ref{semi:thetamequation}) corresponding to velocities $\v_1^m$, $\v_2^m$.
\end{lemma}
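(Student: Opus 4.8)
The plan is to treat \eqref{semi:thetamequation} as a quasilinear parabolic problem with a uniformly elliptic, bounded diffusion coefficient and a prescribed, spatially smooth, divergence-free drift $\v^m\in C([0,T];\H_m)$, and to establish the assertions in the order: the $L^\infty$-bound, the basic energy estimate, the $H^1$/$L^2(H^2)$-estimate, and the continuous dependence. Existence will be obtained from these a priori bounds by a fixed-point construction, and uniqueness will drop out of the continuous dependence estimate by taking equal velocities.

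For the construction I would fix $\eta$ in a suitable ball of $L^2(0,T;H_0^1)\cap C([0,T];L^2)$ and solve the \emph{linear} problem $\partial_t\theta+\v^m\cdot\nabla\theta-\nabla\cdot(\kappa(\eta)\nabla\theta)=0$ with the same data; since $\underline{\kappa}\le\kappa(\eta)\le\overline{\kappa}$ this is uniformly parabolic and uniquely solvable by standard linear theory, defining a map $\mathcal{T}:\eta\mapsto\theta$. The estimates below show $\mathcal{T}$ maps a closed ball into itself, and the Aubin--Lions lemma supplies the compactness needed to apply Schauder's fixed-point theorem; the resulting fixed point solves \eqref{semi:thetamequation}. (A Faedo--Galerkin scheme in the Dirichlet-Laplacian eigenbasis is an equivalent route, the only delicate point being the strong convergence of the approximants needed to identify the limit of $\kappa(\theta^k)$, again furnished by Aubin--Lions.) The $L^\infty$-bound \eqref{semi:thetammaximum} I would obtain by Stampacchia truncation, testing with $(\theta^m-M)^+$ and $(\theta^m+M)^-$ for $M=\|\theta_0\|_{L^\infty}$: the convective contribution vanishes because $\nabla\cdot\v^m=0$ and $\v^m$ has no boundary flux, while the diffusion term is signed; this is exactly the maximum principle of \cite{Lorca}. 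The $L^2$-estimate \eqref{semi:thetamenergy1} follows by testing with $\theta^m$, the term $(\v^m\cdot\nabla\theta^m,\theta^m)$ again vanishing by incompressibility, leaving $\tfrac12\tfrac{\d}{\dt}\|\theta^m\|^2+\underline{\kappa}\|\nabla\theta^m\|^2\le 0$.

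The main obstacle is the $H^1$-estimate \eqref{semi:thetamenergy2}: testing \eqref{semi:thetamequation} directly with $-\Delta\theta^m$ generates the term $-\int\kappa'(\theta^m)|\nabla\theta^m|^2\Delta\theta^m$, whose natural bound $C\|\theta^m\|_{L^\infty}\|\Delta\theta^m\|^2$ competes with the dissipation $\underline{\kappa}\|\Delta\theta^m\|^2$ and would force a smallness hypothesis on $\|\theta_0\|_{L^\infty}$. To bypass this I would pass to the Kirchhoff-type variable $\Theta:=\int_0^{\theta^m}\kappa(s)\,\d s$, which vanishes on $\partial\Omega$ (since $\theta^m$ does) and transforms the equation into $\partial_t\Theta+\v^m\cdot\nabla\Theta=\kappa(\theta^m)\Delta\Theta$, removing the quadratic-gradient term entirely. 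Testing with $-\Delta\Theta$ then gives $\tfrac12\tfrac{\d}{\dt}\|\nabla\Theta\|^2+\int\kappa(\theta^m)|\Delta\Theta|^2=(\v^m\cdot\nabla\Theta,\Delta\Theta)$, and the single remaining term is controlled by the Gagliardo--Nirenberg inequality together with $\|\Theta\|_{L^\infty}\le\overline{\kappa}\|\theta_0\|_{L^\infty}$ and the boundedness of $\v^m$, yielding $\tfrac{\d}{\dt}\|\nabla\Theta\|^2+\underline{\kappa}\|\Delta\Theta\|^2\le C\|\v^m\|_{\boldsymbol{L}^4}^4$. Elliptic regularity under the homogeneous Dirichlet condition upgrades this to an $H^2$-bound, and the boundedness of $\kappa,\kappa',\kappa''$ lets me transfer all estimates back from $\Theta$ to $\theta^m$.

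Finally, for the continuous dependence \eqref{continuous1} I would subtract the two equations, set $\theta=\theta_1^m-\theta_2^m$, $\v=\v_1^m-\v_2^m$, and test the difference with $\theta$. The leading convective term $(\v_1^m\cdot\nabla\theta,\theta)$ vanishes by incompressibility; the cross term $-(\v\cdot\nabla\theta_2^m,\theta)$ is handled after integrating by parts (using $\theta_2^m|_{\partial\Omega}=0$ and the $L^\infty$-bound on $\theta_2^m$) to produce $\varepsilon\|\nabla\theta\|^2+C\|\v\|_{\V}^2$, and the diffusion-difference term $-\int(\kappa(\theta_1^m)-\kappa(\theta_2^m))\nabla\theta_2^m\cdot\nabla\theta$ is estimated exactly as in \eqref{thetaestimate1}--\eqref{thetaestimate2} via $|\kappa(\theta_1^m)-\kappa(\theta_2^m)|\le\|\kappa'\|_{L^\infty}|\theta|$, Gagliardo--Nirenberg, and the $L^2(H^2)$-bound on $\theta_2^m$, giving $\varepsilon\|\nabla\theta\|^2+C\|\theta_2^m\|_{H^2}^2\|\theta\|^2$. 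Absorbing the $\varepsilon$-terms and invoking Gronwall's lemma (with $\theta(0)=0$ and $\|\theta_2^m\|_{H^2}^2\in L^1(0,T)$ from \eqref{semi:thetamenergy2}) then yields \eqref{continuous1}; choosing $\v_1^m=\v_2^m$ gives uniqueness.
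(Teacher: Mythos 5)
Your proposal is correct and follows essentially the same route the paper takes (which it delegates to \cite{Lorca}, \cite{HW2017} and \cite{ZZF2013}): the maximum principle by truncation, the basic energy identity, the Kirchhoff transform $\Theta=\int_0^{\theta^m}\kappa(s)\,\d s$ to kill the $\kappa'(\theta^m)|\nabla\theta^m|^2$ term in the $H^2$-estimate, and the continuous dependence by testing the difference equation with $\theta_1^m-\theta_2^m$ exactly as in \eqref{thetaestimate1}--\eqref{thetaestimate2}. You merely write out in full the details the paper cites, so there is nothing to object to.
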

\medskip

Since $\v^m$ is sufficiently regular, the existence of a global weak solution $\theta^m$ satisfying the maximum principle follows from that in \cite{Lorca}. Besides, we refer to \cite{HW2017} for the continuous dependence estimate \eqref{continuous1} and to \cite{ZZF2013} for the higher order estimate \eqref{semi:thetamenergy2} (via the transform $\Theta^m(x,t) = \int_0^{\theta^m(x,t)} \kappa(s)\, \mathrm{d}s$).
\medskip 

\textbf{Solvability of  $(\phi^m,\mu^m)$.}
Consider the system for $(\phi^m,\mu^m)$:
\begin{equation} \label{phim}
\begin{cases}
\phi_t^m  + \v^m \cdot \nabla \phi^m =\Delta \mu^m,
\quad \text { in }\Omega\times (0, T), \\
\mu^m =  - \Delta \phi^m + W^{\prime}(\phi^m),
\quad \text { in }\Omega\times (0, T), \\
\left.\partial_\mathbf{n} \phi^m\right|_{\partial \Omega}
= \left.  \partial_\mathbf{n} \mu^m \right|_{\partial \Omega}=0,\quad \text{on}\ \partial\Omega\times (0,T),\\
\left.\phi^m\right|_{t=0}=  \phi_0.
\end{cases}
\end{equation} 
We have
\medskip

\begin{lemma}\label{semi:phimlemma}
Assume that $\v^m \in C\left([0, T] ; \H_m\right)$ and $\phi_0 \in H^{1}(\Omega)$ with $\left\|\phi_0\right\|_{L^{\infty}} \leqslant 1$,
$\left|\overline{\phi_0}\right|<1$. Then problem (\ref{phim}) admits a unique weak solution
$$\phi^m \in L^{\infty}(0,T;H^{1}(\Omega)) \cap L^{4}(0,T;H^{2}(\Omega)) \cap L^2(0,T;W^{2,p}(\Omega))\cap H^{1}(0,T;(H^{1}(\Omega))^{\prime}),$$  for any $p\geqslant 2$, $\mu^m \in {L^2(0,T;H^1(\Omega))}$, with
$ \phi^m \in L^{\infty}(\Omega \times (0,T))$, $|\phi^m|<1$ almost everywhere in $\Omega \times (0,T)$ and $\sup \limits_{0\leqslant t\leqslant T} \| \phi^m(t) \|_{L^{\infty}} \leqslant 1$. Moreover, we have the continuous dependence estimate:
\begin{equation} \label{continuous2}
\left\|\phi_1^m(t)-\phi_2^m(t)\right\|_{V_0^{\prime}}^2 +
\int_0^t \| \nabla (\phi_1^m-\phi_2^m)(\tau) \|^2\, \mathrm{d}\tau  \leqslant
C \int_0^t \left\|\v^m_1(\tau)-\v^m_2(\tau)\right\|^2 \,\mathrm{d} \tau,
\end{equation}
for all $t\in [0,T]$, where $\phi_1^m$, $\phi_2^m$ are two solutions to problem (\ref{phim}) corresponding to velocities $\v_1^m$, $\v_2^m$.
\end{lemma}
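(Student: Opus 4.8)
The plan is to construct the solution by regularizing the singular potential, solving a nondegenerate problem, and passing to the limit, and then to establish uniqueness and continuous dependence directly on the limit equation. Since the transport field $\v^m$ is fixed and lies in $C([0,T];\H_m)$, the subsystem (\ref{phim}) is a convective Cahn--Hilliard equation of standard type, and the only genuine difficulty is the logarithmic singularity of $W$. First I would replace the convex part $F$ by a family $F_\lambda \in C^2(\mathbb{R})$ (a Yosida-type regularization of $F'$) with globally Lipschitz derivative and quadratic growth, set $W_\lambda = F_\lambda - \frac{B}{2}\phi^2$, and solve the resulting regular problem for $(\phi^\lambda,\mu^\lambda)$ by a Faedo--Galerkin scheme in the phase variable; the transport term $\v^m\cdot\nabla\phi^\lambda$ is harmless because $\v^m$ is smooth in space. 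Existence for this regularized problem is classical.

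Next I would derive estimates uniform in $\lambda$. Testing the $\phi$-equation by $\mu^\lambda$ and using the cancellation $(\v^m\cdot\nabla\phi^\lambda,\mu^\lambda) = -(\phi^\lambda\v^m,\nabla\mu^\lambda)$, valid because $\nabla\cdot\v^m=0$, gives the energy identity
\begin{equation*}
\frac{\d}{\dt}\Big(\tfrac12\|\nabla\phi^\lambda\|^2 + \int_\Omega W_\lambda(\phi^\lambda)\,\dx\Big) + \|\nabla\mu^\lambda\|^2 = (\phi^\lambda\v^m,\nabla\mu^\lambda),
\end{equation*}
whose right-hand side is absorbed through Young's and the Gagliardo--Nirenberg inequalities. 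Together with mass conservation $\overline{\phi^\lambda}=\overline{\phi_0}$ and $|\overline{\phi_0}|<1$, this bounds $\phi^\lambda$ in $L^\infty(0,T;H^1)$ and $\nabla\mu^\lambda$ in $L^2(0,T;\boldsymbol{L}^2)$. The crucial additional ingredient is an $L^1$-bound on $F_\lambda'(\phi^\lambda)$, obtained by testing (\ref{semimu,vm}) by $\phi^\lambda-\overline{\phi_0}$ as in (\ref{meanmum}); this controls $\overline{\mu^\lambda}$ and hence $\mu^\lambda$ in $L^2(0,T;H^1)$ via Poincar\'e--Wirtinger. Lemma \ref{phipriori} then upgrades the bound to $\phi^\lambda\in L^4(0,T;H^2)$, the elliptic regularity of the singular Neumann problem (Lemma \ref{mu and phi}) yields $\phi^\lambda\in L^2(0,T;W^{2,p})$, and a comparison argument gives $\phi^\lambda_t\in L^2(0,T;(H^1)^{\prime})$. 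With these bounds, Aubin--Lions compactness supplies a subsequence converging strongly enough to pass to the limit in every term; the uniform $L^1$-bound on $F_\lambda'(\phi^\lambda)$ forces the limit to satisfy $|\phi^m|<1$ a.e. and $\sup_t\|\phi^m\|_{L^\infty}\le 1$.

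Finally, uniqueness and the continuous-dependence estimate (\ref{continuous2}) are proved on the limit problem. Given two solutions with transport fields $\v_1^m,\v_2^m$, set $\phi=\phi_1^m-\phi_2^m$ and $\v=\v_1^m-\v_2^m$; mass conservation gives $\overline{\phi}\equiv 0$, so I would test the difference of the equations by $A_0^{-1}\phi\in V_0$. Using $\langle\Delta\mu,A_0^{-1}\phi\rangle=-(\mu,\phi)$ and the splitting $\v_1^m\cdot\nabla\phi_1^m-\v_2^m\cdot\nabla\phi_2^m = \v_1^m\cdot\nabla\phi + \v\cdot\nabla\phi_2^m$, the two convection contributions become $(\phi\v_1^m,\nabla A_0^{-1}\phi)$ and $(\phi_2^m\v,\nabla A_0^{-1}\phi)$ after integration by parts. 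The monotonicity of $F'$ together with $W''\ge-\alpha$ gives $(\mu,\phi)\ge\|\nabla\phi\|^2-\alpha\|\phi\|^2$, while $\|\phi_2^m\|_{L^\infty}\le 1$ bounds the second convection term by $C\|\v\|\,\|\phi\|_{V_0^{\prime}}$; the interpolation (\ref{Pre-V0interpolation}) absorbs $\|\phi\|^2$ into $\|\nabla\phi\|^2$ and $\|\phi\|_{V_0^{\prime}}^2$. One thus arrives at
\begin{equation*}
\frac{\d}{\dt}\|\phi\|_{V_0^{\prime}}^2 + \|\nabla\phi\|^2 \le C\|\phi\|_{V_0^{\prime}}^2 + C\|\v\|^2,
\end{equation*}
and Gronwall's lemma, together with $\phi(0)=0$, yields (\ref{continuous2}); uniqueness for a fixed $\v^m$ is the special case $\v=0$.

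The main obstacle is the singular potential: the entire construction hinges on obtaining the $L^1$-control of $F_\lambda'(\phi^\lambda)$ uniformly in $\lambda$, since this is what simultaneously delivers the bound on $\mu^m$ in $L^2(0,T;H^1)$ and, in the limit, the strict pointwise confinement $|\phi^m|<1$ that legitimizes the use of $\|\phi^m\|_{L^\infty}\le 1$ throughout. By contrast, the convection terms are comparatively routine, thanks to the space-regularity of $\v^m$ and the divergence-free condition, although one must keep every constant independent of $\lambda$ when passing to the limit.
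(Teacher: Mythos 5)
Your proposal is sound. Note that the paper does not actually prove Lemma \ref{semi:phimlemma}: it delegates existence and uniqueness to \cite[Theorem 6]{Abels2009}, the $L^4(0,T;H^2)$ regularity to \cite{HwuHeleshaw}, and the continuous dependence estimate to \cite{HeWeak}. Your sketch reconstructs precisely the standard argument underlying those references --- Yosida-type regularization of $F'$, the energy identity obtained by testing with $\mu^\lambda$ together with the cancellation coming from $\nabla\cdot\v^m=0$, the Miranville--Zelik control of $\overline{\mu^\lambda}$ via the $L^1$-bound on $F_\lambda'(\phi^\lambda)$ (which is where $|\overline{\phi_0}|<1$ enters), and the $A_0^{-1}$-duality argument for continuous dependence, which is the same device the paper uses in its own proof of Theorem \ref{2dweak-unique} (cf.\ \eqref{I1toI2}--\eqref{muphi-phi}) --- so in substance it matches the proofs the paper relies on. Two small points of care. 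First, Lemma \ref{mu and phi} as stated concerns the singular Neumann problem, so the $W^{2,p}$ regularity is most cleanly obtained on the limit $\phi^m$ once $\mu^m\in L^2(0,T;H^1)$ is known (this is exactly how the paper argues in its passage to the limit for the full system), rather than at the level of the regularized $\phi^\lambda$. Second, since $F$ itself remains bounded on $[-1,1]$, a bound on $\int_\Omega F(\phi^m)\,\dx$ only yields $|\phi^m|\leqslant 1$; the strict inequality $|\phi^m|<1$ a.e.\ indeed requires, as you indicate, the uniform control of $F_\lambda'(\phi^\lambda)$ combined with a.e.\ convergence and Fatou, because it is $F'$, not $F$, that blows up at $\pm 1$.
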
 
\medskip 

Since $\v^m$ is sufficiently regular, we refer to \cite[Theorem 6]{Abels2009} for the existence and uniqueness of weak solution, see also \cite{HwuHeleshaw} for the regularity $\phi^m\in L^{4}(0,T;H^{2}(\Omega))$. The continuous dependence estimate follows from the argument in \cite{HeWeak}, in which a more general system was studied. 

\subsection{Solvability of $\u^m$ with given $(\phi^m,\mu^m,\theta^m)$}
Thanks to Lemmas \ref{semi:thetamlemma}, \ref{semi:phimlemma}, we have $\theta^m \in L^{\infty}\left(0, T ; H_0^1 \cap L^{\infty}\right) \cap L^2\left(0, T ; H^2\right)$, $\phi^m \in L^{\infty}(0,T;H^{1}) \cap L^{4}(0,T;H^{2}) \cap H^{1}(0,T;(H^{1})^{\prime})$, such that $\sup \limits_{0\leqslant t\leqslant T} \left\|\theta^m(t)\right\|_{L^{\infty}} \leqslant\left\|\theta_0\right\|_{L^{\infty}}$ and 
 $\sup \limits_{0\leqslant t\leqslant T} \| \phi^m(t) \|_{L^{\infty}} \leqslant 1$. Given the pair $(\phi^m,\theta^m)$, we consider the system for $\u^m$:
\begin{equation} \label{um}
\begin{cases}
(\u_t^m, \w^m) + (\u^m \cdot \nabla \u^m, \w^m)+( 2 \nu(\theta^m) \D \u^m, \nabla \w^m) \\
=\int_{\Omega}[\lambda(\theta^m) \nabla \phi^m \otimes \nabla \phi^m]: \nabla \w^m \,\mathrm{d} x
+\int_{\Omega} \theta^m \g \cdot \w^m \,\mathrm{d}x,  \\
\left.\u^m\right|_{t=0} =\Pi_{m}\u_0,
\end{cases}
\end{equation}
for any $\w^m \in \H_m$.
The system (\ref{um}) is equivalent to a Cauchy problem of ordinary differential equations. Hence, for any $\u_0 \in \L(\Omega)$, the classical Cauchy-Lipschitz Theorem guarantees the existence and uniqueness of a local solution $\u^m \in H^1\left(0, \mathcal{T}_m ; \H_m\right)$.
Using the fact that $\u^m(\cdot, t)$ lies in a finite-dimensional space, we can apply the argument in \cite{HW2017} to obtain the following estimate:
\begin{align}
&\sup\limits_{t \in [0,\mathcal{T}_m]} \| \u^m(t) \|^2 + \int_0^{\mathcal{T}_m} \| \nabla \u^m (t) \|^2 \,\d t \notag \\
&\quad \leqslant C \left(
\| \u_0 \|^2 + \int_0^{\mathcal{T}_m} \| \nabla \phi^m(t) \|^4 \,\d \tau + \int_0^{\mathcal{T}_m} \| \theta^m (t) \|^2 \d\, \tau \right),
\label{boundcontrol2}
\end{align}
which enables us to extend the solution $\u^m$ to the interval $[0,T]$.

For completeness, we show the continuous dependence of $\u^m$ with respect to $(\phi^m,\theta^m)$. Let $\u^m_1$, $\u^m_2$ be two solutions with the same initial data  corresponding to $(\phi^m_1,\theta^m_1)$ and $(\phi^m_2,\theta^m_2)$, respectively.
Let
\begin{align}
\u^m = \u^m_1 - \u^m_2, \quad \phi^m = \phi^m_1 - \phi^m_2,\quad  \theta^m = \theta^m_1 - \theta^m_2.
\notag 
\end{align}
It follows that
\begin{align}
&\left( \u_t^m, \w^m\right) +
 \int_{\Omega}\left(\u_1^m \cdot \nabla \u^m+\u^m \cdot \nabla \u_2^m\right) \cdot \w^m \,\mathrm{d} x +
\int_{\Omega} 2 \nu\left(\theta_1^m \right) \D \u^m: \nabla \w^m \,\mathrm{d} x
\notag \\
&+\int_{\Omega} 2 \left(\nu\left(\theta_1^m\right)-\nu\left(\theta_2^m\right)\right) \D \u_2^m: \nabla \w^m \,\mathrm{d} x
\notag \\
&= \int_{\Omega}\left(\lambda\left(\theta_1^m\right) \nabla \phi_1^m \otimes \nabla \phi_1^m
-\lambda\left(\theta_2^m\right) \nabla \phi_2^m \otimes \nabla \phi_2^m\right): \nabla \w^m \,\mathrm{d} x
+ \int_{\Omega} \theta^m \g \cdot \w^m\, \mathrm{d} x,
\label{umuniqueness1}
\end{align}
for any $\w^m \in \H_m$. 
In view of the regularity of $\u_i^m$, we can take the test function $\w^m = \u^m$. Thanks to the Korn's inequality \eqref{Korn}, we deduce that
\begin{align*}
&\frac{1}{2} \frac{\d}{\dt} \| \u^m \|^2 + \int_{\Omega} \nu(\theta_1^m) | \nabla \u^m |^2 \,\dx \\
&\quad \leqslant - \int_{\Omega} (\u^m \cdot \nabla \u_2^m) \cdot \u^m \,\dx
- \int_{\Omega} 2 (\nu(\theta_1^m) - \nu(\theta_2^m)) \D \u_2^m : \nabla \u^m \,\dx \\
&\qquad + \int_{\Omega} (\lambda (\theta_1^m) \nabla \phi_1^m \otimes \nabla \phi_1^m
-\lambda(\theta_2^m) \nabla \phi_2^m \otimes \nabla \phi_2^m): \nabla \u^m \,\dx
+ \int_{\Omega} \theta^m \g \cdot \u^m \,\dx.
\end{align*}
Thanks to the finite-dimensional property of $\H_m$, we have for $i=1,2$, 
$$
\|\u^m_i\|_{\boldsymbol{L}^\infty}\leq C \|\u^m_i\|, \quad  \|\nabla \u^m_i\|_{\boldsymbol{L}^\infty}\leq C \|\nabla \u^m_i\|
\leq C\|\u^m_i\|,
$$
where the positive constant $C$ depends on $m$. Similar results also hold for $\u^m$, $(\u_i^m)_t$ and $\u_t^m$. Then we can deduce that
\begin{align}
- \int_{\Omega} (\u^m \cdot \nabla \u_2^m) \cdot \u^m \dx
&\leqslant \| \u^m \| \| \nabla \u_2^m \|_{\boldsymbol{L}^{\infty}} \| \u^m \|	 \leqslant C \| \u^m \|^2, \notag 
\end{align}
\begin{align}
- \int_{\Omega} 2 (\nu(\theta_1^m) - \nu(\theta_2^m)) \D \u_2^m : \nabla \u^m \dx
&\leqslant C \| \theta^m \| \| \nabla \u_2^m \|_{\boldsymbol{L}^{\infty}} \| \nabla \u^m \| \notag \\
&\leqslant \varepsilon \| \nabla \u^m \|^2 + C \| \theta^m \|^2,\notag 
\end{align}
and 
\begin{align}
\int_{\Omega} \theta^m \g \cdot \u^m \dx \leqslant C \| \theta^m \|^2 + C \| \u^m \|^2.\notag 
\end{align}
Next, by the Gagliardo-Nirenberg inequality and  Lemmas \ref{semi:thetamlemma}, \ref{semi:phimlemma}, we obtain
\begin{align}
&\int_{\Omega} (\lambda (\theta_1^m) \nabla \phi_1^m \otimes \nabla \phi_1^m
-\lambda(\theta_2^m) \nabla \phi_2^m \otimes \nabla \phi_2^m) : \nabla \u^m \,\dx \notag \\
&\quad =\int_{\Omega} \big[  (\lambda(\theta_1^m) - \lambda(\theta_2^m) ) \nabla \phi_2^m \otimes \nabla \phi_2^m
+ \lambda(\theta_1^m) (\nabla \phi^m \otimes \nabla \phi_2^m + \nabla \phi_1^m \otimes \nabla \phi_1^m) \big]: \nabla \u^m \,\dx \notag \\
&\quad \leqslant C \| \theta^m \| \| \nabla \phi_2^m \|_{L^4}^2 \| \nabla \u^m \|_{\boldsymbol{L}^{\infty}}
+ C(\| \nabla \phi_1^m \| + \| \nabla \phi_2^m \|) \| \nabla \phi^m \| \| \nabla \u^m \| _{\boldsymbol{L}^{\infty}} \notag \\
&\quad \leqslant C \| \theta^m \|^2 + C \| \phi_2^m \|_{H^2}^2 \| \u^m \|^2
+ C \| \nabla \phi^m \|^2 + C \| \u^m \|^2.
\notag 
\end{align}
Combining the above estimates and taking $\varepsilon$ sufficiently small, we have
\begin{align} \label{umuniqueness2}
\frac{1}{2} \frac{\d}{\dt} \|\u^m\|^2 +  \frac{\underline{\nu}}{2} \| \nabla \u^m\|^2
\leqslant
C (1 + \| \phi_2^m \|_{H^2}^2) \|\u^m\|^2 +  C ( \|\nabla \phi^m\|^2 + \| \theta^m\|^2  ),
\end{align}
which together with Gronwall's lemma implies
\begin{equation} \label{continuous3}
	  \left\|\u^m(t)\right\|^2 + \int_0^t  \| \nabla \u^m(\tau)\|^2 \,\mathrm{d} \tau  \leqslant C  \left(\int_0^t \| \nabla \phi^m (\tau)\|^2 + \| \theta^m(\tau) \|^2 \,\mathrm{d} \tau \right) e^{C t},
\end{equation}
for all $t\in [0,T]$. 
Next, taking $\w^m= \u_t^m$ in \eqref{umuniqueness1}, we get 
\begin{align*}
\| \u_t^m\|^2
&= -
 \int_{\Omega}\left(\u_1^m \cdot \nabla \u^m+\u^m \cdot \nabla \u_2^m\right) \cdot \u_t^m\,\mathrm{d} x 
 - \int_{\Omega} 2 \nu\left(\theta_1^m \right) \D \u^m: \nabla \u_t^m \,\mathrm{d} x
\notag \\
&\quad - \int_{\Omega} 2 \left(\nu\left(\theta_1^m\right)-\nu\left(\theta_2^m\right)\right) \D \u_2^m: \nabla \u_t^m \,\mathrm{d} x + \int_{\Omega} \theta^m \g \cdot \u_t^m\, \mathrm{d} x
\notag \\
&\quad + \int_{\Omega}\left(\lambda\left(\theta_1^m\right) \nabla \phi_1^m \otimes \nabla \phi_1^m
-\lambda\left(\theta_2^m\right) \nabla \phi_2^m \otimes \nabla \phi_2^m\right): \nabla\u_t^m \,\mathrm{d} x\\
&\leqslant \|\u_1^m\|_{\boldsymbol{L}^\infty}\| \nabla \u^m\| \|\u_t^m\|
+ \|\u^m\|\|\nabla \u_2^m\|_{\boldsymbol{L}^\infty} \|\u_t^m\| \\
&\quad +C \|\nabla \u^m\| \|\nabla \u_t^m\|
+C \| \theta^m \| \| \nabla \u_2^m \|_{\boldsymbol{L}^{\infty}} \| \nabla \u_t^m \| + \|\theta^m\|\| \u_t^m\|\\
&\quad + C \| \theta^m \| \| \nabla \phi_2^m \|_{L^4}^2 \| \nabla \u_t^m \|_{\boldsymbol{L}^{\infty}}
+ C(\| \nabla \phi_1^m \| + \| \nabla \phi_2^m \|) \| \nabla \phi^m \| \| \nabla \u_t^m \| _{\boldsymbol{L}^{\infty}}\\
&\leqslant \frac12 \|\u_t^m \|^2 + C\| \nabla \u^m\|^2+ C\| \u^m\|^2 +  C(1+\| \phi_2^m \|_{H^2}^2) \| \theta^m \|^2 + C\|\nabla \phi^m\|^2.
\end{align*}
Integrating over time, we deduce from \eqref{continuous3} that
\begin{equation} \label{continuous4}
\int_0^t \| \u_t^m(\tau)\|^2\, \mathrm{d} \tau  \leqslant C \left(
\int_0^t \| \nabla \phi^m(\tau) \|^2\, \mathrm{d} \tau  +
\sup\limits_{\tau \in [0,t]}  \| \theta^m(\tau)\|^2 \right),\quad \forall\, t\in [0,T].
\end{equation}

\subsection{The fixed point argument}
Combining the continuous dependence estimates (\ref{continuous1}), (\ref{continuous2}), (\ref{continuous3}), (\ref{continuous4}), we have
\begin{align} 
& \sup \limits_{t\in [0,T]} \|\u_1^m(t)-\u_2^m(t)\|^2 + \int_0^T  \| \nabla (\u_1^m(t)-\u_2^m(t))\|^2 \,\mathrm{d} t + \int_0^T \|  \partial_t(\u_1^m(t)-\u_2^m(t)) \|^2 \,\mathrm{d} t \notag
\\
&\quad \leqslant C_T \int_0^T \left\|\nabla (\v_1^m(t)-\v_2^m(t)) \right\|^2\, \mathrm{d} t. \label{lianxu}
\end{align}
Let
\begin{align*}
\boldsymbol{X} =& L^{\infty}\left(0, T ; H_0^1(\Omega) \cap L^{\infty}(\Omega)\right) \cap L^2\left(0, T ; H^2(\Omega)\right) \\
&\times L^{\infty}(0,T;H^{1}) \cap L^{4}(0,T;H^{2}) \cap H^{1}(0,T;(H^{1})^{\prime}).
\end{align*}
We infer from \eqref{lianxu} that the mapping
$$
\begin{array}{rlccc}
	& &  & &\\
	\Phi_T^m:\ C\left([0, T] ; \H_m\right) & \rightarrow &
	\textbf{X}
	& \rightarrow & H^1\left(0, T ; \H_m\right) \\
	\v^m & \mapsto & \left(\theta^m,\phi^m \right) & \mapsto & \u^m
\end{array}
$$
is continuous from $C\left([0, T] ; \H_m\right)$ to $H^1\left(0, T ; \H_m\right)$ and thus compact from $C\left([0, T] ; \H_m\right)$ into itself. 
Let $\left\|\v^m\right\|^2_{C([0, T] ; \H_m)}
=\sup\limits_{t \in[0,T]} \sum\limits_{i=1}^m\left|g_i^m(t)\right|^2 \leqslant M$.
Taking $M>0$ sufficiently large and $T_m$ sufficiently small, in view of (\ref{boundcontrol2}), we have
$\left\|\u^m\right\|^2_{C\left([0, T_m] ; \H_m\right)} \leqslant M$. 
Set 
$$
\boldsymbol{Y}_M=\big\{ \u\in C\left([0, T_m] ; \H_m\right)\ :\   \left\|\u\right\|^2_{C([0, T_m] ; \H_m)}  \leqslant M \big\},
$$ 
which is a bounded closed convex subset of $C\left([0, T_m] ; \H_m\right)$.
Therefore, the restriction of $\Phi_T^m$ on $\boldsymbol{Y}_M$ is compact and maps $\boldsymbol{Y}_M$ into itself. Hence, we can apply Schauder's fixed point theorem to obtain an approximate solution $(\u^m,\phi^m,\mu^m,\theta^m)$ of the semi-Galerkin scheme (\ref{semiu})-(\ref{semiboundary}) on $[0,T_m]$ with mentioned regularity. 

The proof of Theorem \ref{exe-approx} is complete.

\section{Semi-Galerkin Scheme for Strong Solutions} \label{semigalerkin_strongsolution}
The semi-Galerkin scheme (\ref{semiu})-(\ref{semiboundary}) is also valid for strong solutions (see \cite{HeStrong,GiorginiGalerkin} for a similar strategy). Compared with the regularity for $\phi^m, \mu^m, \theta^m$ obtained in the semi-Galerkin scheme for weak solutions, further higher order estimates of the approximate solutions require higher regularity for these variables.

For any positive integer $m$, we consider a given vector $\v^m \in C([0,T];\H_m)$. Concerning the subsystem (\ref{semi:thetamequation}) for $\theta^m$, Lemma \ref{semi:thetamlemma} is still valid. Moreover, under the additional assumption $\theta_0 \in H^2(\Omega)$, we can verify that the solution $\theta^m$ belongs to the higher order space $L^{\infty}(0,T;H^2) \cap L^2(0,T;H^3) \cap H^1(0,T;H_0^1)$ (see e.g. \cite{HW2017}). This can be achieved by applying the transform $\Theta^m(x,t) = \int_0^{\theta^m(x,t)} \kappa(s)\, \mathrm{d}s$ to eliminate the temperature-dependent thermal coefficient, and performing estimates for $\Theta^m$ as in \cite{ZZF2013}. Then we can recover the estimates for $\theta^m$. Next, we consider the convective Cahn-Hilliard system (\ref{phim}) for $(\phi^m,\mu^m)$. Lemma \ref{semi:phimlemma} is still valid. Since  $\phi_0 \in H^2(\Omega)$, $\mu_0 \in H^1(\Omega)$, higher order energy estimates can be obtained, following the arguments in \cite{Abels2009,Giorgini2019,HeStrong}. In particular, we have $\mu^m \in L^{\infty}(0,T;H^1)$. According to Remark \ref{remark mu and phi}, there exists $\delta = \delta_m \in (0,1)$, such that $\|\phi^m(t)\|_{L^{\infty}} \leqslant 1 - \delta_m$ for $t\in [0,T]$. Moreover, we can deduce that
\begin{align*}
	&\phi^m \in L^{\infty}(0,T;H^3) \cap L^2(0,T;H^4) \cap H^1(0,T;H^1),
    \\
    &\mu^m \in L^2(0,T;H^3)\cap H^1(0,T;(H^1)^{\prime}).
\end{align*}
 For further details, we refer to, for instance, \cite{Giorgini2019, HeStrong,Giorgini Heleshaw}. Given $(\phi^m,\theta^m)$ obtained above, we turn to the system (\ref{um}) for $\u^m$. This part is identical to the weak solution part. In particular, the continuous dependence estimates (\ref{continuous3}) and (\ref{continuous4}) hold. Finally, by the same fixed point argument, we can obtain a strong solution $(\u^m,\phi^m,\mu^m,\theta^m)$ of the semi-Galerkin scheme (\ref{semiu})-(\ref{semiboundary}) defined on some interval $[0,T_m]$ with higher order regularities.
 
 The proof of Theorem \ref{strongsemi} is complete.

\section{An Elliptic Problem with Singular Nonlinearity} \label{relation_of_mu_phi}
We consider the following Neumann problem
\begin{equation} \label{semielliptic}
	\begin{cases}
		- \Delta \phi + F^{\prime}(\phi) = \widetilde{\mu}, \quad \text{in}\ \Omega, \\
		 \partial_\mathbf{n} \phi  = 0,\quad \text{on}\ \partial\Omega.
	\end{cases}
\end{equation}
Here, $\widetilde{\mu} = \mu + B\phi$ (see (\ref{Wphi})). 
Then we have the following results (see e.g., \cite{Giorgini2019,GiorginiOnoo,HeStrong,inequality}):
\medskip
\begin{lemma} \label{mu and phi}
	Let $\Omega\subset \mathbb{R}^2$ be a bounded domain with smooth boundary. Assume that $\mu \in L^2(\Omega)$, then problem (\ref{semielliptic}) admits a unique solution $\phi\in H^2(\Omega)$ such that $ F^{\prime}(\phi) \in L^2(\Omega)$. Moreover,
	
	 (i) It holds $\tilde{\mu} \in L^2(\Omega)$ and
	\begin{align} \label{mu and phi1}
		\|\phi\|_{H^2(\Omega)}+\left\|F^{\prime}(\phi)\right\| \leqslant C(1+\|\widetilde{\mu}\|).
	\end{align}
	
	 (ii) If $\mu \in L^p(\Omega)$, where $2 \leqslant p \leqslant \infty$, then $\widetilde{\mu} \in L^p(\Omega)$, and we have
	\begin{align} \label{mu and phi2}
		\left\|F^{\prime}(\phi)\right\|_{L^p} \leqslant\|\widetilde{\mu}\|_{L^p}.
	\end{align}
	
	(iii) If $\mu \in H^1(\Omega)$, then $\widetilde{\mu} \in H^1(\Omega)$, and we have
	\begin{align} \label{mu and phi3}
		\|\Delta \phi\| \leqslant\|\nabla \phi\|^{\frac{1}{2}}\|\nabla \widetilde{\mu}  \|^{\frac{1}{2}}.
	\end{align}

(iv) Assume $\mu \in H^1(\Omega)$. For any $p\in [2,\infty)$, there exists a positive constant $C$ such that
	\begin{align} 
		& \|\phi\|_{W^{2, p}}+\left\|F^{\prime}(\phi)\right\|_{L^p} \leqslant C\left(1+\|\widetilde{\mu}\|_{H^1}\right), 
\label{mu and phi4} \\
        &		\left\|F^{\prime \prime}(\phi)\right\|_{L^p} \leqslant C\left(1+e^{C\|\widetilde{\mu}\|_{H^1}^2}\right).
        \label{mu and phi5}
	\end{align}
\end{lemma}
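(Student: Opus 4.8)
The plan is to treat $\widetilde{\mu}$ as the prescribed datum and to exploit the two structural features of the convex part $F$: strict convexity, $F''(r)\geq A>0$ for $r\in(-1,1)$, and the blow-up $F'(r)\to\pm\infty$ as $r\to\pm1$. For existence and uniqueness I would view the equation $-\Delta\phi+F'(\phi)=\widetilde{\mu}$ as the Euler--Lagrange equation of the strictly convex functional $\phi\mapsto\int_{\Omega}\tfrac12|\nabla\phi|^2+F(\phi)-\widetilde{\mu}\phi\,\mathrm{d}x$ on $H^1(\Omega)$. Regularizing $F'$ by a globally Lipschitz (Yosida) approximation, solving the resulting monotone problem, establishing the $H^2$-bound of part (i) uniformly in the regularization parameter, and passing to the limit yields a solution; the singularity of $F'$ forces $|\phi|<1$ a.e., and strict monotonicity of $\phi\mapsto-\Delta\phi+F'(\phi)$ gives uniqueness upon testing the difference of two solutions with $\phi_1-\phi_2$. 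These steps are classical and I would simply quote them from the cited references, keeping the focus on the estimates.

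Parts (i)--(iii) all follow by choosing a good test function and discarding the nonnegative term carrying $F''$. For (i), testing with $F'(\phi)$ and integrating by parts gives $\int_{\Omega}F''(\phi)|\nabla\phi|^2\,\mathrm{d}x+\|F'(\phi)\|^2=(\widetilde{\mu},F'(\phi))$; since $F''\geq A>0$, this produces $\|F'(\phi)\|\leq\|\widetilde{\mu}\|$, whence $\|\Delta\phi\|=\|\widetilde{\mu}-F'(\phi)\|\leq2\|\widetilde{\mu}\|$, and elliptic regularity for the Neumann problem together with $\|\phi\|\leq|\Omega|^{1/2}$ gives the $H^2$-estimate. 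For (ii), testing with $|F'(\phi)|^{p-2}F'(\phi)$ again annihilates a nonnegative gradient term, so H\"older's inequality on the right-hand side yields $\|F'(\phi)\|_{L^p}\leq\|\widetilde{\mu}\|_{L^p}$ for $p\in[2,\infty)$, with $p=\infty$ obtained by letting $p\to\infty$. For (iii), testing with $-\Delta\phi$, integrating by parts (the boundary term vanishes since $\partial_\mathbf{n}\phi=0$), and dropping $\int_{\Omega}F''(\phi)|\nabla\phi|^2\geq0$ gives $\|\Delta\phi\|^2\leq(\nabla\widetilde{\mu},\nabla\phi)\leq\|\nabla\widetilde{\mu}\|\,\|\nabla\phi\|$.

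Part (iv) is the technical heart and is genuinely two-dimensional. The key tool is the Moser--Trudinger-type interpolation $\|f\|_{L^k}\leq C\sqrt{k}\,\|f\|_{H^1}$, valid for every finite $k$ in dimension two. Combined with (ii) it gives $\|F'(\phi)\|_{L^k}\leq\|\widetilde{\mu}\|_{L^k}\leq C\sqrt{k}\,\|\widetilde{\mu}\|_{H^1}$; choosing a fixed $k=p$ controls $\|\Delta\phi\|_{L^p}\leq\|F'(\phi)\|_{L^p}+\|\widetilde{\mu}\|_{L^p}\leq C(1+\|\widetilde{\mu}\|_{H^1})$, and elliptic regularity then yields the $W^{2,p}$-bound. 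For the estimate on $F''$ I would invoke the growth relation $F''(r)\leq C\exp(C|F'(r)|)$, which holds for the logarithmic potential since $F''(r)=A/(1-r^2)$ and $F'(r)=\tfrac{A}{2}\ln\frac{1+r}{1-r}$. Writing $\|F''(\phi)\|_{L^p}^p\leq C\int_{\Omega}\exp(Cp|F'(\phi)|)\,\mathrm{d}x$ and expanding the exponential as $\sum_k (Cp)^k|F'(\phi)|^k/k!$, the bound $\|F'(\phi)\|_{L^k}^k\leq(C\sqrt{k}\,\|\widetilde{\mu}\|_{H^1})^k$ together with $k!\geq(k/e)^k$ renders the series summable and produces the final bound $\|F''(\phi)\|_{L^p}\leq C(1+\exp(C\|\widetilde{\mu}\|_{H^1}^2))$.

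I expect the main obstacle to be exactly this last exponential estimate: one has to track carefully how the constants grow with the summation index $k$ so that the Taylor series converges, and to confirm that the outcome is exponential in $\|\widetilde{\mu}\|_{H^1}^2$ rather than in a higher power. This is precisely where the two-dimensional scaling $\|f\|_{L^k}\sim\sqrt{k}\,\|f\|_{H^1}$ is indispensable, and where the argument would break down in three dimensions.
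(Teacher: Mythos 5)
Your proposal is correct and follows essentially the same route as the paper, which does not prove this lemma itself but delegates it to the cited references (Miranville--Zelik, Giorgini--Miranville--Temam, He--Wu): Yosida-type regularization plus convexity for existence and uniqueness, testing with $F'(\phi)$, $|F'(\phi)|^{p-2}F'(\phi)$ and $-\Delta\phi$ for (i)--(iii), and the two-dimensional scaling $\|f\|_{L^k}\leq C\sqrt{k}\,\|f\|_{H^1}$ together with the growth bound $F''\leq Ce^{C|F'|}$ and a Taylor-series summation for (iv). Your tracking of the constants in the exponential estimate, yielding $e^{C\|\widetilde{\mu}\|_{H^1}^2}$, is exactly the argument used in those references.
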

\begin{remark} \label{remark mu and phi}
	Lemma \ref{mu and phi} provides critical insights into the so-called strict separation property of $\phi$. Namely, the supreme of $\phi$ is controlled by the supreme of $F^{\prime}(\phi)$, while the latter can be controlled by the $H^1$-norm of the chemical potential $\mu$. Suppose $\mu \in H^1(\Omega)$, it follows from  \eqref{mu and phi4} and \eqref{mu and phi5} that $F^{\prime}(\phi) \in W^{1,p}(\Omega)$ for arbitrary $p \geq 2$ in two dimensions (see e.g., \cite[Lemma 3.2]{HeStrong}). This combined with the Sobolev embedding theorem entails $F^{\prime}(\phi) \in L^{\infty}(\Omega)$, and thus $\phi$ is strictly separated from the pure states $\pm1$ due the singular nature of $F$. For further discussions on the separation property of the phase variable $\phi$ satisfying the Cahn-Hilliard equation, we refer to \cite{GGM23,GiorginiOnoo,HeStrong,inequality} and the references therein.
\end{remark}

\section{Proof of Lemma \ref{Giorginiinequality}}
Below we prove the interpolation inequality (\ref{GiorginiHolder}). This relies on an embedding from the H\"{o}lder spaces into the Sobolev spaces with fractional order \cite{GiorginiHolder}.
\begin{proof}(\textbf{Proof of Lemma \ref{Giorginiinequality}.})
Let $u \in H^2(\Omega) \cap C^{\gamma}(\overline{\Omega})$, $\gamma\in (0,1)$. First, applying the Gagliardo-Nirenberg interpolation inequality for the fractional-order Sobolev spaces (see e.g., \cite[Theorem 1]{GLfraction}), we have
\begin{align*}
\| u \|_{W^{1,4}} \leqslant C \| u \|_{W^{s,p}}^{\xi} \| u \|_{H^2}^{1-\xi},
\end{align*}	
where
$$1=s\xi  +2(1-\xi),\qquad \frac{1}{4}=\frac{\xi}{p}+\frac{1-\xi}{2}, \qquad \xi\in (0,1).$$
Let us require $s= 2-\frac{1}{\xi} \in (0,\gamma)\subset (0,1)$. Then it follows that
$$\xi\in \left(\frac12,\frac{1}{2-\gamma}\right)\subset \left(\frac12,1\right),\qquad
p=\frac{4\xi}{2\xi-1}>\frac{4}{\gamma}.$$
Next, we show that $C^{\gamma} (\overline{\Omega}) \hookrightarrow W^{s,p}(\Omega)$ when $s\in (0,\gamma)$. To this end, we only need to estimate the Gagliardo seminorm (see e.g., \cite{GLfraction,fractional}). Indeed, the condition $s-\gamma < 0$ implies $n + (s-\gamma)p < n$ so that 
\begin{align*}
[u]_{W^{s,p}}^p
&= \int_{\Omega}\int_{\Omega} \frac{|u(x) - u(y)|^p}{|x-y|^{n+sp}} \, \d x \d y   \\
&\leqslant \int_{\Omega}\int_{\Omega} \frac{ \|u\|_{C^{\gamma}(\overline{\Omega})}^p  |x - y|^{\gamma p}}{|x-y|^{n+sp}} \, \d x \d y   \\
&\leqslant \|u\|_{C^{\gamma}(\overline{\Omega})}^p \int_{\Omega}\int_{\Omega} \frac{1}{|x-y|^{n+(s-\gamma)p}} \d x \d y \\
&\leqslant C \|u\|_{C^{\gamma}(\overline{\Omega})}^p.
\end{align*}
The proof is complete.
\end{proof}

%%%%%%%%%%%%%%%%%%%%%%%%%%%%%%%%%%%%%%%%%%%%%%%%%

\end{document}